\documentclass[11pt]{article}
\usepackage[T1]{fontenc}
\usepackage{lmodern,amsmath,amssymb,amsthm,mathtools}
\usepackage[margin=1in]{geometry}
\usepackage{microtype,booktabs,array,graphicx,pdflscape,longtable}
\usepackage{comment}
\usepackage{tikz}
\usetikzlibrary{arrows.meta}
\usepackage[colorlinks=true,linkcolor=blue,citecolor=blue,urlcolor=blue]{hyperref}
\numberwithin{equation}{section}
\newtheorem{theorem}{Theorem}[section]
\newtheorem{proposition}[theorem]{Proposition}
\newtheorem{lemma}[theorem]{Lemma}
\newtheorem{corollary}[theorem]{Corollary}
\newtheorem{maintheorem}[theorem]{Theorem}
\theoremstyle{definition}

\newtheorem{remark}[theorem]{Remark}
\newcommand{\N}{\mathbb N}
\newcommand{\R}{\mathbb R}
\newcommand{\Z}{\mathbb Z}
\newcommand{\T}{\mathbb T}

\newcommand{\Q}{\mathbb Q}
\newcommand{\Prob}{\mathcal P}
\newcommand{\ME}{\mathrel{\equiv_m}}
\newcommand{\meq}{\equiv_m}
\newcommand{\leB}{\mathrel{\leq_B}}
\newcommand{\nleB}{\mathrel{\nleq_B}}
\newcommand{\inc}{\mathrel{\perp_B}}
\newcommand{\Hel}{\operatorname{Hel}}
\newcommand{\Aff}{\operatorname{Aff}}
\newcommand{\E}{\mathbb E}
\newcommand{\id}{\operatorname{id}}

\newcommand{\QI}{\operatorname{QI}}
\newcommand{\UE}{\mathrel{\simeq_u}}
\newcommand{\Rep}{\operatorname{Rep}}

\title{Comparisons of benchmark equivalence relations}
\author{Ruijun Wang}
\date{}
\begin{document}
\maketitle
\begin{abstract}
We determine all Borel-reducibility comparisons within the family of
benchmark equivalence relations in Gao's monograph, enlarged by the
universal treeable relation $E_{\infty T}$ and its countable power
$E_{\infty T}^\omega$. We prove
$E_\infty^\omega\leq_B c_0$, $E_\infty\nleq_B\ell_p$ for
$2<p<\infty$, $\ell_p\nleq_B{\equiv_m}$ for $2<p<\infty$,
$c_0\nleq_B{\equiv_m}$, $c_0\nleq_B\ell_\infty^+$, and
$\equiv_m\nleq_B\ell_\infty^+$. Together with the classical result, these give $\ell_p\leq_B{\equiv_m}$ exactly when $p\leq2$.
The same methods show that
$E_{\infty T}$ is incomparable with every $\ell_p$, $1\leq p<\infty$. The comparisons answer questions recorded in Gao's monograph and Kanovei's monograph.
We also prove that unitary equivalence of all bounded linear operators
is not Borel reducible to $\equiv_m$, answering a question of Ding and Gao.
Equivalently, unitary equivalence of arbitrary representations of
$F_\infty$ is strictly more complicated than that of arbitrary
representations of $\mathbb Z$, answering a question of Thomas.
\end{abstract}

\section{Introduction}
\subsection{Classification and its history}
Many mathematical classification problems ask when two objects are the
same up to a specified equivalence: isomorphism of countable groups,
unitary equivalence of operators, or equality of measure classes.
A useful classification should assign complete invariants in a
definable way. Invariant descriptive set theory makes this requirement
precise by putting standard Borel structures on the spaces of objects
and comparing their equivalence relations by Borel reductions.

Part of the subject's history comes from representation theory.
Mackey's work on measurable spaces of representations made the quality
of a quotient Borel structure a central issue. Glimm's analysis of
type~I $C^*$-algebras and Effros's work on transformation groups linked
regularity of orbit spaces to the possibility of classification.
Effros's historical account \cite{Effros} describes this passage from
representation theory to general Polish group actions.
Silver's dichotomy \cite{Silver} and the
Harrington--Kechris--Louveau dichotomy \cite{HKL} gave basic dividing
lines for definable equivalence relations. Friedman and Stanley
\cite{FS} systematically developed Borel reducibility for classes of
countable structures. The theory of hyperfinite relations
\cite{DJK}, the general study of Polish group actions
\cite{BK}, and Hjorth's turbulence theory \cite{HjorthBook} supplied
further tools for distinguishing classification problems.

Gao's monograph \cite{Gao} brings these developments together.
Its Section~15.2 and Figure~15.1 organize several particularly useful
relations of low Borel complexity. We complete the comparison of that
specified benchmark family, include the universal treeable relation
$E_{\infty T}$ and its countable power, and study the position of
arbitrary bounded operators above measure
equivalence. The matrix is a comparison
of this family, not a classification of all $\Pi^0_3$ equivalence relations. We follow
Gao's symbols throughout: $\ell_p$ and $c_0$ denote the corresponding
relations on $\R^\N$, and $\meq$ denotes mutual absolute continuity
of probability measures.

\subsection{Statements and the questions they answer}
We recall the definitions in Section~\ref{sec:prelim}; here $\ell_p$
and $c_0$ denote coset equivalence on $\R^\N$, and
$\mu\equiv_m\nu$ means that the probability measures have the same
Borel null sets.

\begin{maintheorem}[Countable relations and sequence spaces]\label{main:countable}
\[
 E_\infty^\omega\leq_B c_0,
 \qquad E_\infty\inc\ell_p\quad(1\leq p<\infty).
\]
\end{maintheorem}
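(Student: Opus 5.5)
The statement has two parts, and I would treat them separately: a positive reduction $E_\infty^\omega\leq_B c_0$, and non-reductions in both directions between $E_\infty$ and $\ell_p$.

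\textbf{The reduction.} I would first prove $E_\infty\leq_B c_0$ and then pass to countable powers. For $E_\infty$, take the free part of the shift of $F_2$ on $2^{F_2}$ and use the word metric $d$ on each orbit. Fix an enumeration $(U_k)$ of the basic clopen sets in which every set is repeated infinitely often. Define
\[
 \varphi(x)_k=\tfrac1{j(k)}\min\bigl(d(x,[x]\cap U_k),\,N_k\bigr),
\]
where $j(k)\to\infty$ slowly and the caps $N_k$ deal with orbits that miss $U_k$.

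For $g\in F_2$ the $k$-th coordinates of $\varphi(gx)$ and $\varphi(x)$ differ by at most $|g|/j(k)$. Hence $\varphi(gx)-\varphi(x)\in c_0$, so $\varphi$ is a Borel homomorphism. The substance is the converse: if $x$ and $y$ lie in different orbits, I must find infinitely many $k$ with $|\varphi(x)_k-\varphi(y)_k|\geq\varepsilon$. I would arrange this by choosing $U_k$ and $N_k$, Aharoni-style, so that the caps are of order $j(k)$. Then clopen sets separating the two orbit-configurations at larger and larger scales yield normalized distance gaps bounded away from $0$. I expect this calibration of weights and caps to be the main technical step of the positive half.

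For the power, I would show $c_0^\omega\leq_B c_0$. Map $(x_n)$ to the interleaved sequence with entries $x_n(k)$ placed at position $(n,k)$, but only for $k\geq n$. This forces uniformity across $n$ that $c_0^\omega$ does not have. I would repair this by first applying a coordinatewise Borel self-reduction of $c_0$ landing in sequences with controlled tails, or else cite the known $c_0^\omega\sim_B c_0$ if it is recorded in Gao.

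\textbf{$\ell_p\nleB E_\infty$.} This is the classical argument. $\ell_p$ is induced by a turbulent action of the Polish group $\ell_p$ on $\R^\N$. By Hjorth's theorem it is generically $S_\infty$-ergodic, so any Borel homomorphism to $E_\infty$ is generically constant on classes. Since $\ell_p$ is not generically trivial, there is no reduction.

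\textbf{$E_\infty\nleB\ell_p$.} This is the direction I expect to be hardest. Since $E_\infty$ is universal, it suffices to take an ergodic free p.m.p.\ action of a group $\Gamma$ with strong property (T), e.g.\ $SL_3(\Z)$ acting on a Bernoulli shift. The fixed point property $F_{L^p}$ of Bader--Furman--Gelander--Monod holds for such $\Gamma$ and all $1\leq p<\infty$.

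Given a Borel reduction $\varphi$, the map $\alpha(\gamma,x)=\varphi(\gamma x)-\varphi(x)$ is a measurable cocycle into the Banach space $\ell_p$. The induced affine isometric action on $L^p(X,\ell_p)$, which is itself an $L^p$-space, has a fixed point $f$. Then $x\mapsto\varphi(x)-f(x)$ is $\Gamma$-invariant, hence a.e.\ constant. Consequently $\varphi$ sends a conull set into a single $\ell_p$-class, contradicting that $\varphi$ is a reduction and the orbits are countable.

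The obstacle is integrability: $\alpha(\gamma,\cdot)$ need not lie in $L^p$. I would first try to resolve this by passing to a positive-measure set on which $\|\alpha(s,x)\|$ is bounded for the generators $s$, and then using the induced cocycle of the restricted orbit relation; this needs a version of $F_{L^p}$ for the full group of the restricted relation. If that fails, I would fall back on truncating $\varphi$ coordinatewise, using that $\ell_p$-equivalence is determined by the tails of the coordinate differences.
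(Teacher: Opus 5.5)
For $E_\infty\leq_B c_0$ there are two problems. The first is easy to fix. The free part of $\mathbb F_2\curvearrowright2^{\mathbb F_2}$ carries $E_{\infty T}$, not $E_\infty$, so you must use the Schreier graph of the full shift, as the paper does.

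The second is a genuine gap: no choice of clopen $U_k$, weights $j(k)\to\infty$ and caps $N_k$ makes your $\varphi$ a reduction. Let $\rho_k$ be the radius of $U_k$. Suppose $[y]$ contains a point at graph distance at most $\delta$ from $y$ that agrees with $x$ on the ball of radius $N_k+\rho_k$, and symmetrically with $x,y$ exchanged. Then $|\varphi(x)_k-\varphi(y)_k|\le\delta/j(k)$. In the full shift there are $x\notin[y]$ that shadow each other in this way with $\delta=\psi(R)$ at scale $R$, for any prescribed $\psi\to\infty$. To see this, take a Toeplitz-type $\tilde x\in2^{\Z}$ that is $p_k$-periodic with minimal period $p_k$ on $[-r_k,r_k]$, where $p_k\mid p_{k+1}$ and $p_k\ll r_k<p_{k+1}$. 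Let $\tilde y$ be the limit of its shifts by $\sum_{i<k}p_i$, and lift both through $a\mapsto1$, $b\mapsto0$. Fine--Wilf rules out $\tilde y$ being a shift of $\tilde x$. Choosing $\psi$ with $\psi(N_k+\rho_k)=o(j(k))$, which is possible because your data are fixed in advance, gives $\varphi(x)-\varphi(y)\in c_0$.

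The missing idea is that the test sets must be sparse along every orbit, which forces them to be Borel rather than clopen. The paper colors $G^{[2n]}$ with finitely many Borel colors (Kechris--Solecki--Todorcevic). It refines these colorings by $\iota(x)\upharpoonright n$ to obtain nested, point-separating partitions $\mathcal P_n$ whose cells are $2n$-separated, and it uses the coordinates $f_{n,C}(x)=\max\{0,1-d_G(x,C)/n\}$. If $f(x)-f(y)\in c_0$, there are witnesses $z_n\in C_n\ni x$ with $d_G(y,z_n)<n/4$. These satisfy $d_G(z_n,z_{n+1})<2n$, so they coincide and stabilize at the unique point of $\bigcap_nC_n=\{x\}$.

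For $E_\infty\nleq_B\ell_p$, the non-integrability of $\alpha(\gamma,\cdot)$ is the whole difficulty, and neither of your fallbacks resolves it. Clipping coordinates leaves $\|\alpha(s,x)\|_p^p$ an infinite sum of bounded terms, which can still fail to be integrable. Restricting to a set where the generator values are bounded gives a cocycle of the restricted relation that is unbounded along return words. You would also need an $F_{L^p}$-type theorem for that relation, which you have not supplied.

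The paper uses no fixed-point property at all, only spectral gap of the Bernoulli $\mathbb F_2$-shift. Bader--Furman--Gelander--Monod turn this into scalar $L^p$ Poincar\'e inequalities for median-zero functions. The argument runs as follows.
\begin{itemize}
\item Center each coordinate at a median, giving $v_N=(f_n-m_n)_{n<N}$.
\item Apply the scalar inequality to each coordinate of the radial retraction of $v_N$ onto the $R$-ball of $\ell_p^N$. This retraction is bounded, keeps median zero, and is $2$-Lipschitz.
\item Sum over coordinates to get $\int\min\{\|v_N\|_p^p,R^p\}\,d\mu\le2^pC_p\sum_{s\in S}\int\min\{c_s^p,R^p\}\,d\mu$.
\item Let $N\to\infty$ and then $R\to\infty$. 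This gives $\sum_n|f_n-m_n|^p<\infty$ almost everywhere, with no integrability assumption on the cocycle.
\end{itemize}

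Your treatment of $\ell_p\nleq_B E_\infty$ by turbulence, and your appeal to $c_0^\omega\sim_B c_0$, match the paper.
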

Kanovei asked whether $E_\infty$ reduces to $c_0$, to $\ell_1$,
or to some other $\ell_p$ \cite[Question~4.7]{Kanovei}.
Gao also records the $\ell_1$ question and asks whether $E_\infty$
reduces to an orbit relation of an abelian Polish group
\cite[Questions~2.3 and~2.5]{GaoBanach}.
The broad question about some abelian Polish-group action was already
answered positively in the announced work of Frisch and Shinko
\cite[slide~8]{FrischShinkoSlides}; an earlier announcement was made
in October~2023 \cite{FrischShinkoAnnouncement}.
Our reduction supplies the specific natural coset target $c_0$.
The distinction between a specified coset relation and an arbitrary
action of the same group is essential here.
Thomas \cite[Theorem~6.12]{Thomas} already proved
$E_\infty\nleq_B\ell_2$, crediting an observation of Kechris using
Popa's superrigidity theorem; Dougherty--Hjorth \cite{DH} then give
the same conclusion for $1\leq p\leq2$.
Our spectral-gap argument covers every finite exponent and supplies
the remaining range $p>2$. The reverse nonreductions are classical
consequences of Hjorth's turbulence theory.
The passage from $E_\infty$ to $E_\infty^\omega$ uses the known
identity $(c_0)^\omega\sim_B c_0$ \cite[Lemma~7.3]{GaoBanach}.

\begin{maintheorem}[The universal treeable relation]\label{main:treeable}
For every $1\leq p<\infty$,
\[
 E_{\infty T}<_B c_0,\qquad E_{\infty T}<_B{\equiv_m},\qquad
 E_{\infty T}\inc\ell_p.
\]
Moreover, $E_{\infty T}$ is incomparable with each of
$E_0^\omega$, $E_1$, and $E_1^\omega$.
\end{maintheorem}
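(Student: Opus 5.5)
The plan is to deduce each part of the statement from Theorem~\ref{main:countable} and from standard structural facts about $E_{\infty T}$, $E_0^\omega$ and $E_1$.

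\textbf{Upper bounds.} Since $E_{\infty T}$ is a countable Borel equivalence relation, $E_{\infty T}\leq_B E_\infty\leq_B E_\infty^\omega\leq_B c_0$ by Theorem~\ref{main:countable}. For $\equiv_m$ I would use the classical fact (Gao, Section~15.2) that $c_0\leq_B\ell_\infty$ and that every countable Borel equivalence relation reduces to $\equiv_m$ (Kechris). Alternatively, I would show directly that $E_\infty\leq_B\equiv_m$: using the free part of the shift of $F_2$ on $2^{F_2}$, send $x$ to the measure $\sum_{g}2^{-|g|}\delta_{g\cdot x}$, suitably normalized, so that measure classes are determined by the countable orbit.

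\textbf{Strictness.} Both $c_0$ and $\equiv_m$ have uncountably many classes and are not essentially countable. Indeed, $c_0$ is turbulent, by Hjorth's theory, so it does not reduce to any countable Borel equivalence relation. Also $E_0^\omega\leq_B\equiv_m$ (Gao), and $E_0^\omega$ is not essentially countable. Hence neither $c_0$ nor $\equiv_m$ reduces to $E_{\infty T}$.

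\textbf{Incomparability with $\ell_p$.} The direction $\ell_p\nleq_B E_{\infty T}$ is turbulence again: $\ell_p$ is generically $E_{\infty T}$-ergodic because it is turbulent. For $E_{\infty T}\nleq_B\ell_p$, note that $E_{\infty T}$ is not universal, so Theorem~\ref{main:countable} cannot be quoted directly. This is the main obstacle. I would rerun the spectral-gap argument used there, with a treeable relation carrying spectral gap. The natural choice is a free p.m.p.\ action of $F_2$ with spectral gap, for instance $F_2\le SL_2(\Z)$ acting on $\T^2$. Its orbit relation is treeable, so it reduces to $E_{\infty T}$. The spectral-gap (cocycle-rigidity) argument then shows that any Borel homomorphism of this relation into $\ell_p$ maps a conull set into a single class. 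This contradicts injectivity on classes. The point to check is that the proof of $E_\infty\nleq_B\ell_p$ used only a free spectral-gap action, and not universality; I expect it did, since Popa-type superrigidity arguments are exactly what fail for treeable relations. If instead it used Kazhdan's property (T), I would replace that by spectral gap of $F_2\curvearrowright\T^2$ together with the $p$-norm version of the cocycle argument.

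\textbf{$E_0^\omega$, $E_1$, $E_1^\omega$.} For $E_{\infty T}\nleq_B E_0^\omega$, I would use Hjorth's theorem that countable Borel equivalence relations reducible to $E_0^\omega$ are hyperfinite; $E_{\infty T}$ is not hyperfinite since it contains $F_2$ free actions. For $E_{\infty T}\nleq_B E_1^\omega$ (and hence not into $E_1$), I would use the Kechris--Louveau fact that $E_1^\omega$ is hypersmooth-like: countable Borel equivalence relations below $E_1$ are hyperfinite, and the analogous statement for $E_1^\omega$ follows from a product argument. Conversely, $E_1\nleq_B E_{\infty T}$ because $E_1$ does not reduce to any orbit equivalence relation of a Polish group action (Kechris--Louveau), and $E_{\infty T}$ is such a relation. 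Similarly $E_0^\omega\nleq_B E_{\infty T}$ because $E_0^\omega$ is not essentially countable. The case $E_1^\omega$ follows from $E_1\le E_1^\omega$.
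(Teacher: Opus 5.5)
Most of your outline works, and your $\ell_p$ argument is the paper's own. The remaining parts that work are: the upper bounds through $E_\infty\le_B c_0$ and $E_\infty\le_B{=^+}\le_B{\equiv_m}$; strictness from the non-essential-countability of $c_0$ and of $E_0^\omega\le_B{\equiv_m}$; turbulence for $\ell_p\nleq_B E_{\infty T}$; Kechris--Louveau for $E_1\nleq_B E_{\infty T}$ (hence also $E_1^\omega$); and the Hjorth--Kechris sixth dichotomy for $E_{\infty T}\nleq_B E_0^\omega$, which the paper mentions as an alternative.

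For $E_{\infty T}\nleq_B\ell_p$, the spectral-gap homomorphism theorem needs only spectral gap. The paper applies it directly to the free part of the fair Bernoulli shift of $\mathbb F_2$, which is already the model of $E_{\infty T}$. Your $\mathbb F_2\le\mathrm{SL}_2(\Z)\curvearrowright\T^2$ variant also works, with two adjustments. That action is only essentially free ($0$ is a global fixed point), so you must pass to the conull invariant free part before invoking treeability. You also need to cite spectral gap for non-amenable subgroups of $\mathrm{SL}_2(\Z)$ acting on $\T^2$.

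There are a few smaller slips:
\begin{itemize}
\item $c_0\le_B\ell_\infty$ is false (the paper proves $c_0\nleq_B\ell_\infty^+$), although you never use it.
\item $\sum_g2^{-|g|}$ diverges on $\mathbb F_2$, so you need summable weights.
\item Non-hyperfiniteness needs a free \emph{p.m.p.}\ action. Free Borel actions of $\mathbb F_2$ can be hyperfinite, for example the boundary action off a countable set.
\end{itemize}

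The genuine gap is the claim $E_{\infty T}\nleq_B E_1^\omega$, which the statement requires. The fact that countable relations below $E_1$ are hyperfinite does not transfer to $E_1^\omega$ by a product argument. Suppose $f=(f_j)$ reduced $T=E_{\infty T}$ to $E_1^\omega$. Then $T=\bigcap_jT_j$ with $T_j=(f_j\times f_j)^{-1}(E_1)$ hypersmooth. But each $T_j$ merely contains $T$ and generally has uncountable classes, so the countable-relation result applies to no coordinate. Nothing formal makes a countable intersection of hypersmooth relations hyperfinite just because it happens to be countable.

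The paper supplies a real extra input here: Miller's theorem. It gives only that $T$ is hyperfinite on a $\mu$-conull set for each probability $\mu$, and the paper explicitly disclaims a Borel conclusion. The paper then contradicts this with the non-hyperfiniteness of the free p.m.p.\ Bernoulli action.

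You can also prove this step directly.
\begin{itemize}
\item Write $T=\bigcap_j\bigcup_kR_{j,k}$, where $R_{j,k}$ is agreement of the rows of $f_j$ from row $k$ on; each $R_{j,k}$ is smooth and increasing in $k$.
\item For $\kappa\in\N^\N$, the relation $S_\kappa=\bigcap_jR_{j,\kappa(j)}$ is a smooth subrelation of $T$. Hence it has finite classes almost everywhere for the invariant Bernoulli measure.
\item Choose $\kappa(j)$ so that the generator edges $(x,ax)$ and $(x,bx)$ fall outside $R_{j,\kappa(j)}$ only on a set of measure $<\varepsilon2^{-j-1}$. This gives a finite subrelation missing these edges only on a set of measure $<\varepsilon$.
\item The mass-transport principle and the edge-boundary bound $|\partial F|\ge2|F|+2$ for finite vertex sets of the $4$-regular tree make this impossible once $\varepsilon<1/2$.
\end{itemize}
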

The classical position $E_0<_B E_{\infty T}<_B E_\infty$
comes from Jackson--Kechris--Louveau \cite[Section~3.5]{JKL}.
The finite-exponent assertion is a direct application of our
spectral-gap theorem to the free Bernoulli shift; nonreducibility
of $E_\infty$ alone would not imply it.
The tail comparison uses Miller's theorem \cite[Theorem~4]{Miller}.
We give the deductions in Section~\ref{sec:treeable} and distinguish
these coset relations from the arbitrary $\ell_1$-actions in Allison's
theorem \cite[Theorem~1.2]{Allison}.

\begin{maintheorem}[The countable power of the universal treeable relation]
\label{main:treeable-power}
\[
 E_0^\omega<_B E_{\infty T}^\omega<_B E_\infty^\omega,
 \qquad E_{\infty T}^\omega\inc E_\infty.
\]
The relation $E_{\infty T}^\omega$ is not Borel bireducible with any
other member of the benchmark family displayed below.
\end{maintheorem}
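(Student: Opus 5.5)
We will establish the four parts of the theorem separately. We then check bireducibility against the benchmark family by exhibiting, for each other member, one comparison on which it and $E_{\infty T}^\omega$ differ.

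\emph{Reductions.} Since $E_0\leq_B E_{\infty T}\leq_B E_\infty$ (Jackson--Kechris--Louveau), taking countable powers coordinatewise gives $E_0^\omega\leq_B E_{\infty T}^\omega\leq_B E_\infty^\omega$.

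\emph{Strictness of $E_0^\omega<_B E_{\infty T}^\omega$.} This is immediate, because $E_{\infty T}\leq_B E_{\infty T}^\omega$ while $E_{\infty T}\nleq_B E_0^\omega$ by Theorem~\ref{main:treeable}.

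\emph{Strictness of $E_{\infty T}^\omega<_B E_\infty^\omega$.} We show $E_\infty\nleq_B E_{\infty T}^\omega$, which also gives one half of $E_{\infty T}^\omega\inc E_\infty$.
\begin{itemize}
\item Suppose $f$ reduces $E_\infty$ to $E_{\infty T}^\omega$. Then $f$ is a homomorphism from a countable Borel equivalence relation into a countable power of a treeable relation.
\item The key step is a product/ergodicity lemma. If $E$ is a countable Borel relation that is not essentially treeable (for instance $E_\infty$, or concretely the orbit relation of a free measure-preserving action of $SL_3(\Z)$ or of $F_2\times F_2$ with property (T) or product rigidity), and $f\colon E\to F^\omega$ is a reduction, then by considering the coordinate projections $f_n\colon E\to F$ one finds an $E$-invariant Borel set on which each $f_n$-preimage relation is, $\mu$-a.e., either smooth or yields a treeable subrelation.
\item More concretely, restrict to an $E$-ergodic invariant measure $\mu$. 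The relations $R_n=\{(x,y): f_n(x)\,F\,f_n(y)\}$ are Borel equivalence relations containing $E$, and $\bigcap_n R_n=E$. For $x$ fixed, the $E$-class of $x$ is the intersection of the $R_n$-classes. Countability of $E$-classes lets us use the Feldman--Moore argument to find a single $n$ (after discarding a null set, using ergodicity and countable additivity) such that $R_n\cap E$ separates enough. Then the equivalence relation $E$ restricted to a conull set becomes a finite-index extension of $f_n^{-1}(F)\cap E$.
\item Treeability of $F$ gives that $f_n^{-1}(F)\cap E$ is measure-treeable, via Gaboriau/Hjorth: a countable-to-one homomorphism into a treeable relation gives treeability.
\item Finite-index extensions of treeable relations are treeable, contradicting the choice of a non-measure-treeable $E$ (such as $F_2\times F_2$ Bernoulli, by Gaboriau's $\ell^2$-Betti numbers).
\end{itemize}
This is the main obstacle: getting from a reduction into a \emph{power} to one coordinate. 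I would first try the known theorem that for countable $E$ with an invariant ergodic measure, a reduction into $F^\omega$ yields on a conull set a reduction into some finite power $F^k$, a rigidity available in Hjorth--Kechris style ``countable power'' arguments. Then I would use that $F^k$-classes restricted to the image are treeable-by-products and handle products of treeable relations via Gaboriau's cost or Betti-number invariants, which are multiplicative-incompatible with $F_2$-type treeings.

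\emph{The other half of incomparability.} For $E_{\infty T}^\omega\nleq_B E_\infty$: $E_0^\omega\leq_B E_{\infty T}^\omega$ and $E_0^\omega$ is not essentially countable (it is not even reducible to a countable relation, by the classical Kechris--Louveau result on $E_0^\omega$ being non-essentially-countable). So $E_{\infty T}^\omega$ is not essentially countable.

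\emph{Bireducibility with the rest of the family.} Finally, for each other member $R$ of the benchmark family displayed below, we pick a separating comparison from Theorems~\ref{main:countable}--\ref{main:treeable} and the relations just proved:
\begin{itemize}
\item $R=E_\infty^\omega$: $E_\infty$ reduces to $R$ but not to $E_{\infty T}^\omega$.
\item $R$ essentially countable: excluded by the previous paragraph.
\item $R=\ell_p$: $E_{\infty T}\nleq_B\ell_p$.
\item $R\in\{c_0,\meq\}$ and the larger relations: $E_1\leq_B R$, while $E_1\nleq_B E_{\infty T}^\omega$ by Kechris--Louveau, since $E_{\infty T}^\omega$ is an orbit relation of a Polish group.
\item $R\in\{E_0^\omega,E_1,E_1^\omega\}$: handled by the incomparability with $E_{\infty T}$ in Theorem~\ref{main:treeable}.
\end{itemize}
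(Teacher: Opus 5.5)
The reductions, the lower strictness via $E_{\infty T}\nleq_B E_0^\omega$, and $E_{\infty T}^\omega\nleq_B E_\infty$ via $E_0^\omega$ are fine and agree with the paper. The gap is the step you flag yourself, $E_\infty\nleq_B E_{\infty T}^\omega$. Your route cannot work, because non-treeability of the source does not obstruct a reduction into a countable power of a treeable relation.

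Here is a concrete counterexample. Let $\mathbb F_2\times\mathbb F_2$ act coordinatewise on $X_T\times X_T$ with the product of the Bernoulli measures. This action is free, ergodic and measure preserving. Its orbit relation $E_{\infty T}\times E_{\infty T}$ is not treeable on any conull set, by Gaboriau's $\ell^2$-Betti numbers, since $\beta^{(2)}_2(\mathbb F_2\times\mathbb F_2)=1$. Yet $(x,y)\mapsto(x,y,y,y,\ldots)$ reduces it to $E_{\infty T}^\omega$. So your lemma, as stated for free measure-preserving actions of $\mathbb F_2\times\mathbb F_2$, is false.

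The specific step fails in this example:
\begin{itemize}
\item $\bigcap_n R_n$ has countable classes, but every single $R_n$ has uncountable classes. So no coordinate makes $E$ a finite-index extension of anything, and no $f_n$ is countable-to-one.
\item Since $R_n\supseteq E$, the relation ``$f_n^{-1}(F)\cap E$'' is just $E$. The countable-to-one homomorphism principle therefore gives nothing coordinatewise.
\item Your fallback through a finite power $F^k$ plus cost or Betti numbers fails on the same example, since this non-treeable relation already reduces to $F^2$.
\end{itemize}

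The missing idea is rigidity, not treeability. Take the Bernoulli shift of a property~(T) group such as $\mathrm{SL}_3(\Z)$. The Adams--Spatzier/Hjorth--Kechris theorem (Theorem~\ref{thm:propertyT-treeable}) says every Borel homomorphism from its orbit relation into a treeable relation sends a conull set into a single class. Apply this to each coordinate of a putative reduction into $E_{\infty T}^\omega$, and intersect the countably many conull sets. The reduction would then place a conull, hence uncountable, set inside one countable orbit, which is impossible. Since this orbit relation reduces to $E_\infty$, you get $E_\infty\nleq_B E_{\infty T}^\omega$. That gives both the upper strictness and $E_{\infty T}^\omega\inc E_\infty$. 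This is the paper's argument; you name $\mathrm{SL}_3(\Z)$ but never use property~(T), which is the essential input.

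Separately, your final separation list is wrong for $c_0$, $\meq$ and ${=^+}$. By Kechris--Louveau, $E_1$ reduces to none of them, since each lies below an orbit relation of a Polish group action. You can separate these instead in either of two ways:
\begin{itemize}
\item As the paper does: $E_\infty\leq_B c_0,\meq,{=^+}$ together with $E_\infty\nleq_B E_{\infty T}^\omega$.
\item Independently of the gap: by turbulence for $c_0$ ($c_0\nleq_B{=^+}$, while $E_{\infty T}^\omega\leq_B E_\infty^\omega\leq_B{=^+}$), and by pinnedness for ${=^+}$ and $\meq$ (${=^+}\nleq_B E_\infty^\omega$, while ${=^+}\leq_B\meq$).
\end{itemize}
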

These classical separations use the property-$(\mathrm T)$ obstruction
to countable products of treeable relations, explicitly recorded by
Jackson--Kechris--Louveau \cite[Section~3.7, after Theorem~3.31]{JKL}.
They credit the methods of Adams--Spatzier and the formulation of
Hjorth--Kechris \cite{AdamsSpatzier,HK1996}.
Section~\ref{sec:treeable-power} gives the resulting complete comparison
profile. We include this additional benchmark with its established
provenance.

\begin{maintheorem}[The exponent threshold]\label{main:lp}
For $1\leq p\leq\infty$,
\[
 \ell_p\leq_B{\equiv_m}\quad\Longleftrightarrow\quad p\leq2.
\]
\end{maintheorem}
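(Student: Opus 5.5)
The theorem splits into a positive half for $p\leq 2$ and a negative half for $p>2$, including $p=\infty$.

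\emph{Positive direction ($1\leq p\leq 2$).} I will invoke the classical result mentioned in the abstract. The standard reduction of $\ell_2$ to $\equiv_m$ is Kakutani's dichotomy. Send $x\in\R^\N$ to the product Gaussian measure $\mu_x=\bigotimes_n N(x_n,1)$. Kakutani's theorem says that $\mu_x$ and $\mu_y$ are either equivalent or mutually singular. They are equivalent exactly when $\prod_n\int\sqrt{d N(x_n,1)\,dN(y_n,1)}>0$. This Hellinger product equals $\prod_n e^{-(x_n-y_n)^2/8}$, so equivalence holds iff $x-y\in\ell_2$. The map $x\mapsto\mu_x$ is Borel, so $\ell_2\leq_B{\equiv_m}$.

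For $p<2$ I will use the classical fact (Dougherty--Hjorth) that $\ell_p\leq_B\ell_q$ whenever $1\leq p\leq q<\infty$. Composing gives $\ell_p\leq_B{\equiv_m}$. Alternatively, I can reduce $\ell_p$ directly by replacing Gaussians with one-dimensional laws whose Hellinger distance under shift by $t$ behaves like $|t|^p$ for small $t$, for instance symmetric stable-type densities. Each coordinate must also be given a bounded transform so that large shifts do not break the summability criterion. I would cite Dougherty--Hjorth rather than redo this.

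\emph{Negative direction ($2<p\leq\infty$).} The paper lists $\ell_p\nleq_B{\equiv_m}$ for $2<p<\infty$ among its new results. For $p=\infty$, I will reduce this case to a finite one. Fix any $2<p<\infty$; I claim $\ell_p\leq_B\ell_\infty$ (if this is available in Gao), and then $\ell_\infty\leq_B{\equiv_m}$ would contradict the finite case. Alternatively, I will use $c_0\leq_B\ell_\infty$ together with the stated $c_0\nleq_B{\equiv_m}$. The latter route is safer, since $c_0\leq_B\ell_\infty$ is classical.

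The core is $\ell_p\nleq_B{\equiv_m}$ for $p>2$, and I expect this to be the main obstacle. My plan is a cocycle or continuity argument. Suppose $f:\R^\N\to P(X)$ is a Borel reduction. Restrict to a comeager set on which $f$ is continuous, and then to a suitable compact cube of the form $\prod_n[0,\varepsilon_n]$. On that cube the measures $f(x)$ and $f(x+h)$ must be equivalent for $h\in\ell_p$. This gives Radon--Nikodym derivatives, and hence a map from $\ell_p$ into a unitary representation on $L^2$ via square roots of densities. The aim is to extract a continuous map from $\ell_p$ into a Hilbert space that is coarse or uniform in the sense that $\|h\|_p\to\infty$ forces orthogonality, that is, singularity.

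For $p>2$, $\ell_p$ does not uniformly or coarsely embed into Hilbert space in a way compatible with this (Enflo type / Mendel--Naor metric cotype). That should yield a sequence $h_k\notin\ell_p$, obtained as a limit of finite-support perturbations, on which $f(x)$ and $f(x+h_k)$ remain non-singular. This is a contradiction, because non-singularity would put $h_k$ in $\ell_p$. The hardest step is making the Hellinger affinity depend on $h$ in a way that lets the type-2 obstruction apply. First I would pass to a positive-definite kernel $\phi(h)=\int\sqrt{d f(x)\,d f(x+h)}$, averaged over $x$, and use Schoenberg's theorem to show that $\phi$ must have the form $e^{-\psi}$ with $\psi$ conditionally negative definite. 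Then I would show that no conditionally negative definite function on $\ell_p$, $p>2$, has exactly $\ell_p$ as its finite-value set.
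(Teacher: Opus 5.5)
Your positive direction matches the paper's (Dougherty--Hjorth plus Kakutani). The core case $2<p<\infty$, however, has a genuine gap, and the contradiction you aim for is not a contradiction.

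A reduction to $\equiv_m$ only guarantees that $f(x)$ and $f(x+h)$ are \emph{inequivalent} when $h\notin\ell_p$. Inequivalent measures can have positive affinity, for example $\mu$ and $\frac12(\mu+\nu)$ with $\nu\perp\mu$. So exhibiting $h_k\notin\ell_p$ with $f(x)$ and $f(x+h_k)$ non-singular proves nothing. Nothing forces orthogonality as $\|h\|_p\to\infty$ either: for $h\in\ell_p$ the images are equivalent, and the Hellinger part of any such embedding is bounded. So a coarse obstruction cannot be the tool.

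The averaging step also fails as set up. There is no translation-invariant probability on $\R^\N$, since its first marginal would be a translation-invariant probability on $\R$. A cube $\prod_n[0,\varepsilon_n]$ or a comeager continuity set is not translation invariant at all. Hence $\int\Aff(f(x),f(x+h))\,d\lambda(x)$ need not be positive definite in $h$, and it encodes no quasi-invariance group.

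Even with positive definiteness, Schoenberg would not give $\phi=e^{-\psi}$ with $\psi$ conditionally negative definite; that requires infinite divisibility. What is actually available is that $2-2\phi$ is conditionally negative definite. Moreover, the relevant invariant is not where some function is finite, but whether the Hellinger data give a complete invariant metric compatible with $d_p$.

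The missing ideas are the paper's compact averaging and its completeness argument.
\begin{itemize}
\item Compose with the chord map of Lemma~\ref{lem:compact-replacement} to get $K/H_p\leB\ME$ with $K=\T^\N$. Then integrate the kernel against Haar measure, keeping the $K$-coordinate.
\item By Lemma~\ref{lem:fibers}, the resulting single measure $M$ on $K\times2^\N$ satisfies $M_g\ME M$ exactly for $g\in H_p$, and its orbit map is weakly continuous (Proposition~\ref{prop:averaging}).
\item Then $D(h,k)^2=\|q(h)-q(k)\|^2+\Hel(M_h,M_k)^2$ is a complete invariant Hilbert metric on $H_p$. A $D$-Cauchy sequence converges in $K$ to some $g$. $L^2$-convergence of the square-root densities gives $M_g\ll M$, and the same argument applied to $-h_j$ gives $M_{-g}\ll M$. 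Hence $g\in H_p$.
\end{itemize}
This last step is the correct form of your ``limit of finite-support perturbations'' idea. It is exactly where inequivalence, not singularity, is used: both one-sided absolute continuities are needed.

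Lemma~\ref{lem:unique-topology} then makes $D$ compatible with $d_p$. This yields fixed-scale two-sided bounds, which the Hilbert-space cotype inequality of Lemma~\ref{lem:torus-inequality} on $(\Z/m\Z)^{m^2}$ refutes (Proposition~\ref{prop:lp-obstruction}). Your instinct that metric cotype is the final obstruction is right, but it applies to a uniform (fixed-scale) embedding, not a coarse one.

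For $p=\infty$, your first route via $\ell_p\leB\ell_\infty$ is valid once the finite case is known. Your ``safer'' route is false: $c_0\nleB\ell_\infty$, because $E_0^\omega\leB c_0$ while $E_0^\omega$ reduces to no $\Sigma^0_3$ relation. The paper instead uses $E_1\leB\ell_\infty$ and the Kechris--Louveau obstruction $E_1\nleB\ME$, which does not depend on the finite case.
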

\begin{maintheorem}[Null sequences and measure classes]\label{main:c0}
\[c_0\nleq_B{\equiv_m}.\]
\end{maintheorem}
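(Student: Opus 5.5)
We need to prove $c_0\nleq_B\equiv_m$. The plan is to use a generic-ergodicity / category argument: $c_0$ is a coset relation of the Polish group $c_0$ acting on $\R^\N$ (product topology), and this action is turbulent. The relation $\equiv_m$ is classifiable by countable structures only partially. In fact $\equiv_m$ is not classifiable by countable structures, since $\ell_1\leq_B\equiv_m$ and $\ell_1$ is turbulent. So turbulence alone cannot work, and we need an invariant that distinguishes $c_0$ from $\equiv_m$.

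The invariant I would use comes from Kechris--Sofronidis style analysis of measure equivalence. A Borel reduction $f$ of an orbit equivalence relation $E_G$ to $\equiv_m$ can be upgraded, on a comeager set, to a map into measures. One can also arrange this for a Borel homomorphism to unitary equivalence via the Koopman/multiplication-operator picture. The key structural fact about $\equiv_m$ is that its classes are determined by \emph{$\sigma$-ideals of null sets}. These can be combined countably: given countably many $\mu_n$, the measure $\sum 2^{-n}\mu_n$ is an "upper bound", and absolute continuity gives a preorder $\ll$ with countable joins compatible with $\equiv_m$. So I would aim to show that any Borel homomorphism $f$ from $c_0$ to $\ll$-structure, i.e. any map with $x\,c_0\,y\Rightarrow f(x)\equiv_m f(y)$, must identify generically many $c_0$-inequivalent points, via a \emph{join argument}. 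Concretely, the steps are as follows.

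First, fix a reduction $f$ and pass to a comeager set where $f$ is continuous. Second, use the fact that $\R^\N/c_0$ carries the family of relations $E_\varepsilon=\{(x,y):\limsup|x_n-y_n|\leq\varepsilon\}$, and that $c_0=\bigcap_k E_{1/k}$ while each $E_\varepsilon$ "stays close". Here I would exploit the $\ell_\infty$ scaling: the maps $x\mapsto x+t\,\mathbf 1_A$ for infinite $A$ move points outside their $c_0$-class but remain in a small neighbourhood in the sup-metric on tails. Third, build, for generic $x$, a sequence $y^k\,E_{1/k}\,x$ and consider $\nu=\sum 2^{-k}f(y^k)$. Using continuity and a mixing/orthogonality dichotomy for measure classes (two measures decompose into mutually singular and equivalent parts, Lebesgue decomposition is Borel), I would show that the singular parts of $f(y)$ relative to $f(x)$ must tend to zero for $y$ closer to $x$ in $E_\varepsilon$. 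This would force $f(y)\equiv_m f(x)$ along a sequence with $y\notin[x]_{c_0}$, which is a contradiction.

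The main obstacle is the third step: extracting quantitative continuity of the Lebesgue decomposition with respect to the $\limsup$ sup-metric. Here $\equiv_m$ in fact does reduce $\ell_1$ and $\ell_2$ (Kakutani-type product measures), so the argument must use essentially that $c_0$ is defined by a non-summable, sup-type condition. I would first try Kakutani's dichotomy in reverse. I would show that any reduction can be modified on a comeager set to be "locally Hilbertian": the Hellinger affinity $\Hel(f(x),f(y))$ yields a pseudometric whose Borel-definable behaviour forces $E_{f}$ to be an $\ell_2$-like (countable intersection of relations given by quadratic-summability conditions) relation. $c_0$ is not of this form: it is not reducible to $\ell_2$, by Dougherty--Hjorth type results, and more strongly it is not reducible to any relation that is a coset relation of a Polish group admitting a compatible \emph{Hilbert} structure. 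The danger is that this last reduction, from $\equiv_m$ to an $\ell_2$-type invariant, is exactly where the real work lies, and I would expect to need the paper's earlier analysis of product measures via the Hellinger integral to make it rigorous.
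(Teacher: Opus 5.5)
\textbf{There is a genuine gap.} The proposal never produces the invariant that carries the proof, and both routes you sketch toward it fail as stated.

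In your third step, a reduction only tells you whether $f(x)\equiv_m f(y)$ or not. Nothing forces the singular parts, or the Hellinger distances, relative to $f(x)$ to shrink as $y$ approaches $x$ in the $\limsup$ sense. Indeed, the Kakutani reduction of $\ell_2$ sends sup-small perturbations outside $\ell_2$ to mutually singular measures. So any such continuity would have to come from the source relation, and you give no mechanism for it. Even granting it, a sequence of measures whose singular parts relative to $f(x)$ tend to zero does not produce a single $y\notin[x]_{c_0}$ with $f(y)\equiv_m f(x)$: mutual absolute continuity is not preserved under such limits.

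Your fallback is to show that the pullback of $\equiv_m$ is ``$\ell_2$-like'', i.e.\ to pass from $\equiv_m$ to an $\ell_2$-type invariant. This is impossible as formulated. The pullback of $\equiv_m$ along the atomic-measure reduction is ${=^+}$, and ${=^+}\nleq_B\ell_2$ by pinnedness, so Hjorth's $c_0\nleq_B\ell_2$ cannot be transferred this way. Your ``more strongly'' claim about coset relations of Hilbertian groups is neither proved nor what is needed. What is needed is a statement about the \emph{group itself}: a reduction forces the Polish group of null sequences to carry a compatible translation-invariant metric induced by a Hilbert-space embedding.

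\textbf{The missing idea is averaging over a compact group.} Your category set-up on $\R^\N$ has no invariant probability to average against. The pointwise quantities $\Hel(f(x+h),f(x+k))$ depend on $x$ and give no translation-invariant metric. The paper proceeds as follows.
\begin{enumerate}
\item It passes to the compact coset relation $K/H_0$ with $K=\T^\N$, which reduces continuously to $c_0$ (Lemma~\ref{lem:compact-replacement}).
\item It integrates the hypothetical reduction $x\mapsto\mu_x$ against Haar probability $\lambda$, forming $M(dx,dy)=\lambda(dx)\mu_x(dy)$ on $K\times2^\N$. Lemma~\ref{lem:fibers} gives $M_g\equiv_m M$ exactly when $g\in H_0$, and $g\mapsto M_g$ is weakly continuous (Proposition~\ref{prop:averaging}).
\item The map $\Psi(h)=(q(h),\sqrt{dM_h/dM})$ is an injective map into a Hilbert space with translation-invariant distance. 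Completeness uses both $M_g\ll M$ and $M_{-g}\ll M$, obtained from the Cauchy sequences $(h_j)$ and $(-h_j)$. Lemma~\ref{lem:unique-topology} then makes this metric compatible with $d_0$ (Proposition~\ref{prop:Hilbert-metric}).
\item The final obstruction is not a Dougherty--Hjorth-type reducibility result. It is a Hilbert-space cotype inequality on $(\Z/m\Z)^{m^2}$ (Lemma~\ref{lem:torus-inequality}). Axis half-turns have $d_0$-length $1/2$, while every sign step has $d_0$-length at most $1/m$. The Fourier estimate $\sum_j\E\|F(x+\tfrac m2e_j)-F(x)\|^2\le4m^2\E\|F(x+\varepsilon)-F(x)\|^2$ then forces $m^2c^2\le4m^2(c/3)^2$, a contradiction.
\end{enumerate}
Your last paragraph points at the right contrast, Hellinger geometry versus sup-type geometry. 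Without the Haar averaging and this inequality, however, the proposal is not a proof.
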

These answer the questions recorded in Gao
\cite[Section~8.5, p.~200; Section~15.2, p.~349]{Gao}.
The positive direction in Theorem~\ref{main:lp} is classical:
combine Dougherty--Hjorth with Kakutani's product-measure criterion
\cite{DH,Kakutani}. The endpoint $p=\infty$ is also classical,
by $E_1\leq_B\ell_\infty$ and the orbit-relation obstruction
of Kechris--Louveau \cite{KL}. The contribution here is the negative
direction for finite $p>2$ and Theorem~\ref{main:c0}.
We give the compact-averaging argument and
the geometric obstruction in full. The Polish topology on a
quasi-invariance group and its connection with unitary representations
are established ingredients \cite{HMP,Gabri06}.

\begin{maintheorem}[Two jump obstructions]\label{main:jumps}
\[
 c_0\nleq_B\ell_\infty^+,
 \qquad {\equiv_m}\nleq_B\ell_\infty^+.
\]
\end{maintheorem}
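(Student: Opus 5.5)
The plan is to treat both assertions with a single scheme. It combines generic ergodicity along the "jump" coordinate with an analysis of the residual map into $\ell_\infty$. I will use the description of $\ell_\infty^+$ from Section~\ref{sec:prelim}: it is the jump-type relation whose complete invariant is a countable set of $\ell_\infty$-classes. That is, $\ell_\infty^+$-classes of $x=(x_n)$ are determined by $\{[x_n]_{\ell_\infty}:n\in\N\}$. Both $c_0$ and $\equiv_m$ are orbit relations of Polish group actions, of $c_0$ on $\R^\N$ and of the quasi-invariance group (with the Polish topology of \cite{HMP}) on the space of measures. They are turbulent: for $c_0$ by Hjorth's theory \cite{HjorthBook}, and for $\equiv_m$ by Kechris--Sofronidis. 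Suppose $f$ is a Borel reduction of $E\in\{c_0,\equiv_m\}$ to $\ell_\infty^+$. Restricting to a comeager invariant set where $f$ and the coordinate maps $x\mapsto f(x)_n$ are continuous, I first reduce to the situation where the set-valued invariant is "generically constant modulo $\ell_\infty$-coordinates". For each $n$ the map $x\mapsto[f(x)_n]_{\ell_\infty}$ is a homomorphism to $\ell_\infty$. Turbulence gives generic $E$-ergodicity with respect to $=^+$. The countable-set structure then yields a comeager set $C$ and a Borel selection. Each $E$-class meeting $C$ is sent to a set of $\ell_\infty$-classes containing a distinguished element $g(x)$ depending Borel-ly on $x$, and $g$ is a homomorphism $E\to\ell_\infty$ on $C$.

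The second step is to show that on a nonmeager set this $g$ is actually injective on classes, so that $E\restriction C'$ reduces to $\ell_\infty$. Here I would use the standard Hjorth argument. If the countably many candidate coordinates all failed to separate classes on comeager sets, then $f$ would collapse generically many $E$-classes. This contradicts injectivity of a reduction on the meager-complement. Thus the problem reduces to showing that no turbulent piece of $c_0$ (respectively $\equiv_m$) restricted to a comeager invariant set Borel reduces to $\ell_\infty$.

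The third step, which I expect to be the main obstacle, is ruling out $c_0\restriction C'\leq_B\ell_\infty$ and $\equiv_m\restriction C'\leq_B\ell_\infty$. Since $\ell_\infty$ is $K_\sigma$ (Rosendal), I would try a Baire-category boundedness argument. A continuous homomorphism $g$ on a comeager set into $\ell_\infty$ satisfies $\|g(x)-g(x+h)\|_\infty\le N$ for all $h$ in a neighbourhood of $0$ in $c_0$, with $x$ ranging over a nonmeager set; this follows by covering $c_0$ by countably many closed sets $\{h:\|g(x)-g(x+h)\|\le N\}$. Iterating along a sequence of small $c_0$-translations whose sum leaves $c_0$-classes but stays in the same $\ell_\infty$-distance bound then produces non-equivalent points with $\ell_\infty$-equivalent images. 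For $\equiv_m$ I would transfer this through the compact-averaging/Kakutani embedding of $c_0$-like coset structure into measure classes used for Theorem~\ref{main:c0}. Concretely, I would embed a copy of $c_0$ (via product measures with densities tending to $1$) into $\equiv_m$ on a nonmeager set, so that the $c_0$ case applies. The delicate point is arranging that the telescoping translations stay within the comeager set where $g$ is continuous. I would handle this by a Kuratowski--Ulam argument over the translation parameter.
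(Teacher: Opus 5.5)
\textbf{There is a genuine gap.} It sits at Step~2, and the overall strategy cannot be completed as stated.

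\textbf{Step 2.} You try to pass from a reduction $f$ into $\ell_\infty^+$ to a single row $g$ that reduces $E$ to $\ell_\infty$ on a nonmeager set. Nothing forces this, for four reasons.
\begin{itemize}
\item The rows of $f$ are not individually homomorphisms to $\ell_\infty$: if $x\,E\,y$, row $k$ of $f(x)$ matches \emph{some} row of $f(y)$, not necessarily row $k$.
\item There is no Borel invariant way to select one $\ell_\infty$-class from a countable set of them.
\item Turbulence gives generic ergodicity toward relations classifiable by countable structures (and, by Kanovei--Reeken, up to $E_1^+$), but never toward $\ell_\infty$. Indeed $\ell_1$ and $\ell_2$ are turbulent and still reduce to $\ell_\infty$.
\item Rows can separate classes jointly while no single row does.
\end{itemize}
The implication that actually closes the argument runs the other way. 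You must show that \emph{every} row is locally generically constant modulo $\ell_\infty$. This is a generic-ergodicity statement, far stronger than ``no row is a reduction''. Generic ergodicity of $E$, together with the resulting countable family of representatives, then collapses $f$; this is Lemma~\ref{lem:rows}. The paper avoids any selection by a Baire-category argument on pairs $(x,h)$. For a fixed row $k$, the countably many conditions $\sup_i|F_k(x+h)_i-F_l(x)_i|\le n$ cover the space, so one fixed $(l,n)$ is comeager on an open rectangle. This bounds row $k$ at two nearby points by $2n$.

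\textbf{Step 3.} Step~3 therefore aims at the wrong target, and on the wrong presentation. Even a proof that $c_0$ does not reduce to $\ell_\infty$ on a comeager set would not help. What is needed is generic constancy of each row, and on $\R^\N$ that is false: the identity map is a continuous homomorphism from $c_0$ to $\ell_\infty$ whose fibres are meager. The paper's key idea is the compact model $K=\T^\N$, with $H$ the circle-null sequences.
\begin{itemize}
\item Continuity on a comeager set, plus finite truncations, extends the $2n$ bound to \emph{all} pairs at small sup circle distance, not only $H$-related pairs.
\item Because $\T$ has diameter $1/2$, any two points of a basic cylinder are joined by a fixed number $m$ of such steps.
\item This gives a uniform bound $2nm$, and Kuratowski--Ulam yields constancy.
\end{itemize}

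\textbf{The $\equiv_m$ case.} Your transfer plan cannot work. Embedding $c_0$ into $\equiv_m$ is impossible, since $c_0\nleq_B{\equiv_m}$ (Theorem~\ref{main:c0}). The familiar relations below $\equiv_m$, such as $=^+$ and $\ell_2$, all reduce to $\ell_\infty^+$, so there is no source to transfer. The paper argues directly on atomless full-support measures.
\begin{itemize}
\item A Polish bundle of square-root densities, with open source and target maps, supplies the local $(l,n)$.
\item Hellinger-closeness gives the $2n$ bound. It is extended to non-equivalent pairs by mixing each measure with $(\mu_1+\mu_2)/2$.
\item Convex interpolation, with $\Hel^2\le\|\cdot\|_{\mathrm{TV}}\le 2/m$ between consecutive points, replaces the bounded circle diameter.
\item Generic ergodicity comes from the action of $C(C,\R)$ by exponential densities.
\end{itemize}
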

These settle two comparisons left undecided by Gao's original diagram.
Theorems~\ref{main:countable}--\ref{main:jumps},
together with the cited comparisons collected in
Section~\ref{sec:established}, determine every entry of
Table~\ref{tab:matrix}.

Fix a separable infinite-dimensional complex Hilbert space
$\mathcal H$. Let $\simeq_u$ denote unitary equivalence of all bounded
operators on $\mathcal H$. In Thomas's notation,
$\approx_\Gamma^+$ denotes unitary equivalence of arbitrary
representations of the countable group $\Gamma$ on $\mathcal H$.
Here the superscript denotes arbitrary representations; it is not
the jump used in Theorem~\ref{main:jumps}.
\begin{maintheorem}[Bounded operators]\label{main:operators}
\[{\equiv_m}<_B{\simeq_u}.\]
\end{maintheorem}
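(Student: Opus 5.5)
The statement has two halves: $\equiv_m\leq_B\simeq_u$, which is classical, and $\simeq_u\nleq_B\equiv_m$, which is the new content. For the reduction I will use the spectral theorem. Send a Borel probability measure $\mu$ on a compact interval (or on the circle $\T$), after a Borel identification with the relevant measure space, to the operator $T_\mu$ given by $\bigoplus_{n\in\N}M_x$ on $\bigoplus_n L^2(\mu)$, transported to $\mathcal H$ by a Borel choice of orthonormal bases. This operator is self-adjoint with uniform infinite multiplicity and spectral measure class $[\mu]$. By Hahn--Hellinger, two such operators are unitarily equivalent exactly when $\mu\equiv_m\nu$. Using countably many copies removes the multiplicity invariant, and choosing the bases by Gram--Schmidt applied to the images of a fixed countable dense family makes the map Borel. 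This is the standard argument of Ding--Gao and Kechris.

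For the nonreduction I plan to find a relation $E$ with $E\leq_B\simeq_u$ and $E\nleq_B\equiv_m$. The natural candidate is $E_\infty$, or a countable-group action with property $(\mathrm T)$ or spectral gap. The paper already develops spectral-gap and compact-averaging obstructions against $\equiv_m$, and Theorem~\ref{main:c0} gives $c_0\nleq_B\equiv_m$. This suggests a simpler route: show $c_0\leq_B\simeq_u$, and then $\simeq_u\leq_B\equiv_m$ would give $c_0\leq_B\equiv_m$, contradicting Theorem~\ref{main:c0}.

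To realize $c_0$ inside $\simeq_u$, I would first reduce to unitary equivalence of representations. By Thomas's framing (the $F_\infty$ formulation in the abstract), $\approx^+_{F_\infty}$ is Borel bireducible with $\simeq_u$: a tuple of bounded operators can be encoded into a single operator, for instance by block matrices with weighted shifts, and conversely. So it suffices to reduce $c_0$ to $\approx^+_{F_\infty}$ or to $\approx^+_{F_2}$. Given $x\in\R^\N$, I would build unitaries $U_n^x$ on $\mathcal H$, defined as a direct sum over $n$ of rank-two or finite-dimensional blocks carrying the data $x_n$. The blocks would be rigid, for example irreducible representations of a property-$(\mathrm T)$ or free group that are pairwise inequivalent across $n$, so that unitary equivalence forces blockwise equivalence. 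The parameter $x_n$ would enter only through a rotation angle $e^{i x_n}$ paired against a reference. The aim is a representation $\pi_x$ with $\pi_x\approx\pi_y$ iff $x-y\in c_0$. Exact equality per coordinate gives $=^\omega$, not $c_0$, so tail behaviour has to be encoded. The way to get it is a non-type-I, asymptotic phenomenon: take infinite tensor products of finite-dimensional representations, twisted by angles $x_n$. By a Kakutani/Bures-type criterion, the infinite tensor product $\bigotimes_n(\C^2,\xi_n(x_n))$ is equivalent to $\bigotimes_n(\C^2,\xi_n(y_n))$ iff $\sum_n|1-\langle\xi_n(x_n),\xi_n(y_n)\rangle|<\infty$. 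That criterion gives $\ell_2$-type relations, so to obtain $c_0$ I would take a countable direct sum over scalings $k\in\N$ of the tensor products indexed by $kx$. Equivalence of all scaled copies simultaneously should characterize $x-y\in c_0$, in the spirit of the identity $(c_0)^\omega\sim_B c_0$ and the Gao--Kanovei representation of $c_0$ as an intersection of $\ell_p$-type conditions.

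I expect this encoding to be the main obstacle. The direct-sum-of-tensor-products construction must recover exactly $c_0$, and the infinite tensor products need enough irreducibility or disjointness that unitary equivalence of the direct sums forces equivalence of corresponding summands. If that fails, my fallback is to use the spectral-gap machinery directly. I would realize $E_\infty$, via the free Bernoulli shift of $F_2$ and its Koopman representation twisted by the orbit point, inside $\approx^+_{F_\infty}$. I would then invoke the paper's argument that $E_\infty\nleq_B\ell_p$ in the form used for $\equiv_m$: the quasi-invariance group of a measure class is Polish and is represented unitarily on $L^2$ (as in \cite{HMP,Gabri06}), and a property-$(\mathrm T)$ cocycle into it must be cohomologous to a compact-valued one. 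That would contradict the ergodicity forced by a reduction.
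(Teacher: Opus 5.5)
Your reduction $\equiv_m\le_B\simeq_u$ matches the paper's: infinite amplification of a multiplication operator, made Borel by Gram--Schmidt on a fixed dense family. The nonreduction half has a genuine gap. Nothing establishes $c_0\le_B\simeq_u$, and the encoding you sketch cannot produce it.

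If the summands for different $k$ are kept disjoint, as you intend, the direct sum encodes the \emph{intersection} of the summand relations. So it can only shrink the $k=1$ relation. For $\xi_n(x_n)=(\cos x_n,\sin x_n)$ that relation is $\sum_n(1-|\cos(x_n-y_n)|)<\infty$. This fails for $x_n-y_n=n^{-1/2}$ even though $x\mathrel{c_0}y$, so your map is not even a homomorphism.

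The problem is structural, not a matter of tuning the vectors. Since $1-|\langle\xi,\eta\rangle|\le\frac12\|\xi\xi^*-\eta\eta^*\|_{\mathrm{HS}}^2\le2(1-|\langle\xi,\eta\rangle|)$, each Bures-type tensor criterion is the pullback of $\ell_2$ under a Borel map. It therefore reduces to $\equiv_m$ by Kakutani's theorem. Disjoint countable direct sums correspond to $(\equiv_m)^\omega$, which reduces to $\equiv_m$ by placing weighted copies of the measures on disjoint clopen sets. So every relation your scheme can encode reduces to $\equiv_m$, and an encoding of $c_0$ would contradict Theorem~\ref{main:c0} itself.

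Your fallback fails for a more basic reason: $E_\infty\le_B E_\infty^\omega\le_B{=^+}\le_B\equiv_m$ (Section~\ref{sec:established}). Hence no countable Borel relation, including property $(\mathrm T)$ orbit relations, can witness $\simeq_u\nleq_B\equiv_m$.

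The missing idea is an invariant that unitary equivalence detects but measure classes cannot: the Murray--von Neumann dimension of a module. For non-type-I representations this separates unitary equivalence from quasi-equivalence. The paper uses it as follows.
\begin{itemize}
\item Take $K=(\widehat{\Q})^\N$, $G=j(\ell_1(\R))$, and $H=j(B_0)$, where $B_0$ is the set of sum-zero $\ell_1$ sequences.
\item A restricted-product infinite measure with translation modulus $e^{\sum_nh_n}$ and a free ergodic action of $D$ give a $\mathrm{II}_\infty$ factor $M$.
\item The translated representations $\pi_k=\pi\circ\alpha_k$ on the finite-trace corner $L^2(M,\tau)p$ have quasi-equivalence stabilizer $G$. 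Their unitary-equivalence stabilizer is only $H$, because $\tau(\beta_h^{-1}(p))=e^{-\sum_nh_n}$ (Theorem~\ref{thm:representation}). Hence $K/H\le_B\simeq_u$.
\item Conversely, compact averaging turns a reduction $K/H\le_B\equiv_m$ into a measure whose quasi-invariance group is exactly $H$.
\item Theorem~\ref{thm:rigidity} shows that any quasi-invariance group containing $H$ must contain $G$. Its ingredients are Koopman continuity on $B_0$, ergodic decomposition over $D$, upper semicontinuity of affinity along $v_n(a)\to g(a)$ with $\|v_n(a)\|_H=2|a|$, and the root trick $mg(a/m)=g(a)$.
\end{itemize}
Note that $H$ carries a compatible invariant Hilbertian metric, namely $\|h-k\|_1^{1/2}$. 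So the Hilbert-metric obstruction behind Theorem~\ref{main:c0} says nothing here, and nothing in your plan plays the role of this rigidity argument.
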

\begin{maintheorem}[Arbitrary representations]\label{main:thomas}
\[
 {\approx^+_{\Z}}\sim_B{\equiv_m}
 <_B{\approx^+_{F_\infty}}\sim_B{\simeq_u}.
\]
\end{maintheorem}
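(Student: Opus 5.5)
The theorem follows once Theorem~\ref{main:operators} is combined with two classical bireducibilities. I plan to establish ${\approx^+_{\Z}}\sim_B{\equiv_m}$ and ${\approx^+_{F_\infty}}\sim_B{\simeq_u}$ separately. The strict inequality in the middle then transfers directly from ${\equiv_m}<_B{\simeq_u}$.

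\textbf{First equivalence: ${\approx^+_{\Z}}\sim_B{\equiv_m}$.} A representation of $\Z$ on $\mathcal H$ is a single unitary $U$. By the spectral theorem with multiplicity, $U$ is determined up to unitary equivalence by the measure classes $[\mu_n]$, $n\in\{1,2,\dots,\infty\}$. These classes are mutually singular and live on $\T$. The multiplicity-free part is handled by the Hahn--Hellinger decomposition, which can be computed in a Borel way from $U$. The standard route is to take the spectral measures of a fixed dense sequence of vectors and apply Borel Lebesgue decompositions. This yields a Borel reduction of ${\approx^+_\Z}$ to a countable product of copies of $\equiv_m$ on $\Prob(\T)$. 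Since $(\equiv_m)^\omega\leq_B{\equiv_m}$ (one codes the sequence of classes by $\sum 2^{-n}\mu_n\otimes\delta_n$ on $\T\times\N$), this gives ${\approx^+_\Z}\leq_B{\equiv_m}$.

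For the converse, I send $\mu$ to the multiplication operator by $z$ on $L^2(\mu)$. This must be transported Borel-ly to the fixed $\mathcal H$, and the infinite-dimensionality can be secured by tensoring with $\ell^2$. The reverse embedding is due to Kechris and Sofronidis, and I would cite it rather than reprove it.

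\textbf{Second equivalence: ${\approx^+_{F_\infty}}\sim_B{\simeq_u}$.} This is Thomas's framework. The reduction ${\simeq_u}\leq_B{\approx^+_{F_\infty}}$ goes through polar decomposition: a bounded $T$ is unitarily equivalent data to a pair of self-adjoint operators $\mathrm{Re}\,T,\mathrm{Im}\,T$. After scaling, each self-adjoint contraction is coded by the unitary $e^{iA}$, and this coding is Borel. A pair of unitaries defines a representation of $F_2\subseteq F_\infty$, with the remaining generators sent to the identity.

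For the converse, a representation $\pi$ of $F_\infty$ is coded by the sequence $(\pi(g_n))_n$. A classical fact is that the $W^*$-algebra generated by countably many operators is singly generated up to a tensor trick. Concretely, on $\mathcal H\otimes\ell^2$ one forms the single operator $\bigoplus$-weighted combination $T=\sum_n 2^{-n}\pi(g_n)\otimes e_{n,n+1}$ together with a recoverable shift. Unitary equivalence of such $T$ should correspond exactly to simultaneous unitary equivalence of the sequences. I expect this direction to be the main obstacle. Rather than build it by hand, I would cite Thomas's result that $\approx^+_{F_\infty}$ and unitary equivalence of operators are bireducible.

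\textbf{Assembling.} Combining the two equivalences with Theorem~\ref{main:operators} gives the displayed chain. The strictness ${\approx^+_\Z}<_B{\approx^+_{F_\infty}}$ is exactly ${\equiv_m}<_B{\simeq_u}$, after substituting both bireducibilities.
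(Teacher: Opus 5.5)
Your outline matches the paper's proof at every step but one. The classical ${\approx^+_\Z}\sim_B{\meq}$ via multiplicity-tagged measures, the reduction of ${\simeq_u}$ to a pair of unitaries through the real and imaginary parts and the exponential (padded by identities), and the transfer of strictness from Theorem~\ref{main:operators} are exactly what the paper does. Two small corrections there. What you use is the Cartesian decomposition, not the polar one. The scaling must be unitarily invariant and invertible, e.g.\ $T\mapsto T/(1+\|T\|)$ with inverse $Q\mapsto Q/(1-\|Q\|)$; no fixed constant works on all of $B(\mathcal H)$.

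The genuine gap is ${\approx^+_{F_\infty}}\leB{\simeq_u}$. Your sketched operator $T=\sum_n2^{-n}\pi(g_n)\otimes e_{n,n+1}$ carries no information. Conjugating by $W=\sum_nW_n\otimes e_{n,n}$ with $W_0=I$ and $W_{n+1}=W_n\pi(g_n)$ turns it into $\sum_n2^{-n}I\otimes e_{n,n+1}$ for every $\pi$. The same gauge argument defeats any coding in which the unitary weights sit on the edges of a tree of blocks, so all the content lies in the unspecified ``recoverable shift''. Three ingredients are needed:
\begin{itemize}
\item a self-adjoint part with distinct eigenvalues on the blocks, which forces every intertwiner to be block diagonal;
\item identity links from one block to all others, which force the diagonal blocks of the intertwiner to coincide;
\item the $\pi(g_n)$ placed on links that close cycles, so that they survive as holonomies.
\end{itemize}
This is the paper's Lemma~\ref{lem:tuple-coding}: $T_U=\Delta+iB_U$ with $\Delta=\sum_j2^{-j-1}J_jJ_j^*$, links $J_0J_j^*+J_jJ_0^*$, and $U_n$ joining blocks $2n+1$ and $2n+2$.

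Deferring to Thomas does not close this gap. The paper cites Thomas only for the $\Z$ case and proves the $F_\infty$ bireducibility itself in Proposition~\ref{prop:operator-group}. Without a precise reference, your rightmost $\sim_B$ is unproved. The strict inequality does survive without it. Padding gives ${\meq}\leB{\approx^+_\Z}\leB{\approx^+_{F_\infty}}$, and ${\simeq_u}\leB{\approx^+_{F_\infty}}$ together with ${\simeq_u}\nleB{\meq}$ gives ${\approx^+_{F_\infty}}\nleB{\meq}$.
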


The superscript here is
Thomas's notation for arbitrary representations, \emph{not} a jump operation.

Ding and Gao ask whether unitary equivalence of all bounded operators
is Borel reducible to measure equivalence \cite[p.~735]{DingGao}.
Thomas asks the corresponding comparison for arbitrary representations
of $F_\infty$ and $\Z$ \cite[Question~8.2, p.~1784]{Thomas}.
Theorems~\ref{main:operators} and~\ref{main:thomas} give negative
answers to these equivalent formulations. The lower reduction from
measure equivalence and the comparison for $\Z$ are classical.

The meaning of the operator conclusion requires care. Hjorth and
T\"ornquist proved that unitary equivalence of arbitrary representations
is $\Pi^0_3$ \cite{HT12}; this pointclass bound does not supply a
reduction to measure equivalence. Recent work of Lupini
\cite[Theorems~F and~G]{Lupini26} concerns irreducible representations
and a different comparison of unitary duals. Our conclusion rules
out Borel complete invariants given by scalar measure classes for
all bounded operators. It makes no assertion against other forms of
spectral analysis, functional calculus, or operator-valued invariants.
This is consistent with Ding and Gao's distinction between spectral
theory and classification \cite[p.~731]{DingGao}.

We give full proofs of the additional comparisons above and cite the
established inputs at each use. Together they complete the comparison
in the following family.

\subsection{Scope and the comparison matrix}
The comparison theorem concerns Gao's benchmark family together with
the universal treeable relation and its countable power:
\[
\begin{gathered}
\id(\N),\quad \id(2^\N),\quad E_0,\quad E_0^\omega,\quad
E_{\infty T},\quad E_{\infty T}^\omega,\\
E_\infty,\quad E_\infty^\omega,\quad =^+,\quad c_0,\quad
\ell_r\ (1\le r<\infty),\quad \meq,\\
E_1,\quad\ell_\infty,\quad E_1^\omega,\quad E_1^+,\quad\ell_\infty^+ .
\end{gathered}
\]
Here $\id(2^\N)$ is Borel isomorphic to $\id(\R)$.
The definitions are given in Section~2. Some members already have
simpler representatives, for example $F_\sigma$ representatives; the
designation $\Pi^0_3$ is an upper bound for the benchmark presentations,
not a claim of exact Borel rank for every vertex.

Table~\ref{tab:matrix} gives every ordered comparison for a fixed
$2<p<\infty$. A row is the source and a column is the target:
$Y$ means a Borel reduction exists, $N$ means none exists, and the
diagonal symbol $=$ denotes the identity reduction. The table is
independent of the chosen value of $p$ in this range.
It describes the full parameterized family by the following rules:
\begin{enumerate}
\item For finite $r,s$, $\ell_r\le_B\ell_s$ exactly when $r\le s$,
by Dougherty--Hjorth \cite{DH}.
\item Against every vertex outside the finite-exponent family, all
$\ell_r$ with $1\le r\le2$ have the same comparison profile as $\ell_2$.
\item Against those same vertices, all $\ell_r$ with $2<r<\infty$
have the profile of the displayed $\ell_p$.
\end{enumerate}
Thus the matrix together with these rules includes parameters
$1<r<2$ and comparisons between two distinct parameters above~$2$.
In particular,
\begin{equation}\label{eq:intro-infty-lp}
 E_{\infty T}\inc\ell_r,\qquad E_{\infty T}^\omega\inc\ell_r,\qquad E_\infty\inc\ell_r\quad(1\le r<\infty),
\end{equation}
where $\inc$ means that neither relation reduces to the other.

\begin{table}[htbp]
\centering
\small
\setlength{\tabcolsep}{3pt}
\renewcommand{\arraystretch}{1.13}
\resizebox{\textwidth}{!}{%
\begin{tabular}{r@{\quad}l*{19}{c}}
\toprule
\multicolumn{2}{c}{Source $\backslash$ target} & 01 & 02 & 03 & 04 & 05 & 06 & 07 & 08 & 09 & 10 & 11 & 12 & 13 & 14 & 15 & 16 & 17 & 18 & 19\\
\midrule
01 & $\operatorname{id}(\omega)$ & = & Y & Y & Y & Y & Y & Y & Y & Y & Y & Y & Y & Y & Y & Y & Y & Y & Y & Y\\
02 & $\operatorname{id}(2^\omega)$ & N & = & Y & Y & Y & Y & Y & Y & Y & Y & Y & Y & Y & Y & Y & Y & Y & Y & Y\\
03 & $E_0$ & N & N & = & Y & Y & Y & Y & Y & Y & Y & Y & Y & Y & Y & Y & Y & Y & Y & Y\\
04 & $E_0^\omega$ & N & N & N & = & N & Y & N & Y & Y & Y & N & N & N & Y & N & N & Y & Y & Y\\
05 & $E_{\infty T}$ & N & N & N & N & = & Y & Y & Y & Y & Y & N & N & N & Y & N & Y & N & Y & Y\\
06 & $E_{\infty T}^\omega$ & N & N & N & N & N & = & N & Y & Y & Y & N & N & N & Y & N & N & N & Y & Y\\
07 & $E_\infty$ & N & N & N & N & N & N & = & Y & Y & Y & N & N & N & Y & N & Y & N & Y & Y\\
08 & $E_\infty^\omega$ & N & N & N & N & N & N & N & = & Y & Y & N & N & N & Y & N & N & N & Y & Y\\
09 & $=^+$ & N & N & N & N & N & N & N & N & = & N & N & N & N & Y & N & N & N & Y & Y\\
10 & $c_0$ & N & N & N & N & N & N & N & N & N & = & N & N & N & N & N & N & N & N & N\\
11 & $\ell_1$ & N & N & N & N & N & N & N & N & N & N & = & Y & Y & Y & N & Y & N & N & Y\\
12 & $\ell_2$ & N & N & N & N & N & N & N & N & N & N & N & = & Y & Y & N & Y & N & N & Y\\
13 & $\ell_p$ & N & N & N & N & N & N & N & N & N & N & N & N & = & N & N & Y & N & N & Y\\
14 & $\equiv_m$ & N & N & N & N & N & N & N & N & N & N & N & N & N & = & N & N & N & N & N\\
15 & $E_1$ & N & N & N & N & N & N & N & N & N & N & N & N & N & N & = & Y & Y & Y & Y\\
16 & $\ell_\infty$ & N & N & N & N & N & N & N & N & N & N & N & N & N & N & N & = & N & N & Y\\
17 & $E_1^\omega$ & N & N & N & N & N & N & N & N & N & N & N & N & N & N & N & N & = & Y & Y\\
18 & $E_1^+$ & N & N & N & N & N & N & N & N & N & N & N & N & N & N & N & N & N & = & Y\\
19 & $\ell_\infty^+$ & N & N & N & N & N & N & N & N & N & N & N & N & N & N & N & N & N & N & =\\
\bottomrule
\end{tabular}}
\caption{All ordered comparisons in the nineteen-vertex slice, for any fixed $2<p<\infty$. Each column has the same numerical key as the corresponding row.}
\label{tab:matrix}
\end{table}

The rest of this paper is organized as follows. Section~\ref{sec:prelim} supplies the descriptive, analytic, and
operator-algebraic preliminaries. Section~\ref{sec:established}
derives the matrix from established comparisons and the subsequent
theorems. Section~\ref{sec:sequence-measure} proves the measure
threshold and the null-sequence obstruction;
Section~\ref{sec:countable-sequences} treats countable relations and
the treeable benchmark and its countable power;
Section~\ref{sec:jumps} proves the jump obstructions;
Section~\ref{sec:operators} proves the operator and representation
theorems.
\section{Preliminaries}\label{sec:prelim}

\subsection{Borel spaces, reductions, and orbit relations}
A \emph{Polish space} is a separable completely metrizable topological
space. A \emph{standard Borel space} is a set with the Borel
$\sigma$-algebra of a Polish topology. A map is \emph{Borel} if the
inverse image of every Borel set is Borel.
A Borel equivalence relation on $X$ is an equivalence relation
that is a Borel subset of $X\times X$.
For relations $E$ on $X$ and $F$ on $Y$, a \emph{Borel reduction}
is a Borel map $f:X\to Y$ such that
\[
 x\,E\,x'\quad\Longleftrightarrow\quad f(x)\,F\,f(x').
\]
We write $E\le_B F$ if such a map exists,
$E\sim_B F$ if both reductions exist, and $E<_B F$ if
$E\le_B F$ but $F\nleq_B E$. A \emph{homomorphism} only
requires the forward implication. Reducibility is transitive;
incomparability is denoted by $E\inc F$.

A relation is \emph{smooth} if it reduces to equality on a standard
Borel space. A relation is \emph{countable} if every class is countable.
A countable Borel relation is \emph{hyperfinite} if it is an increasing
union of Borel equivalence relations with finite classes.
A Borel relation is \emph{essentially countable} if it Borel reduces
to a countable Borel equivalence relation. A countable Borel relation
is \emph{treeable} if it has an acyclic Borel graph whose connected
components are precisely its classes. Such a graph is called a
\emph{treeing} \cite{JKL}.
A Borel relation is \emph{hypersmooth} if it is an increasing
union of smooth Borel equivalence relations.
These are different notions. A relation is universal for a class if every member of that class Borel reduces to it.

An action $G\curvearrowright X$ of a Polish group induces the
\emph{orbit equivalence relation}
$x\,E_G^X\,y$ if $g\cdot x=y$ for some $g\in G$.
Relations reducible to isomorphism of countable structures are
said to be \emph{classifiable by countable structures}.
Equivalently, they reduce to an orbit relation of the permutation
group $S_\infty$; see \cite{BK,Gao}.

The Borel hierarchy begins with open sets ($\Sigma^0_1$) and
closed sets ($\Pi^0_1$). A $\Sigma^0_2$, or $F_\sigma$, set is
a countable union of closed sets. A $\Pi^0_3$ set is a countable
intersection of $F_\sigma$ sets. A $K_\sigma$ set is a
countable union of compact sets. The latter is a more restrictive
condition on a given noncompact ambient space.
A $G_\delta$ set is a countable intersection of open sets; a
$G_\delta$ subspace of a Polish space is Polish.

\subsection{The benchmark relations}
We use $\N=\{0,1,\ldots\}$. Equality on $X$ is $\id(X)$.
The relation $E_0$ on $2^\N$ is eventual equality:
$x\,E_0\,y$ when $x(n)=y(n)$ for all sufficiently large $n$.
The relation $E_1$ is eventual equality of rows on
$(2^\N)^\N$: $x\,E_1\,y$ when $x_n=y_n$ for all sufficiently
large $n$. It is the universal hypersmooth Borel relation
\cite{KL,Gao}.

The relation $E_\infty$ is universal among countable Borel equivalence
relations. We use the full shift action of the free group
$\mathbb F_2$ on $2^{\mathbb F_2}$:
$(g\cdot x)(h)=x(g^{-1}h)$. The orbit relation of this full
shift is a model of $E_\infty$ \cite{Gao}.
The restriction to the invariant free part
\[
 X_T=\operatorname{Free}(2^{\mathbb F_2})
     =\{x:(\forall g\ne1)\ g\cdot x\ne x\}
\]
is the universal treeable relation $E_{\infty T}$
\cite[Theorem~3.17 and Definition~3.18]{JKL}; see also
\cite[Definition~2.6]{HK2001} for this concrete model.
The set $X_T$ is $G_\delta$, and the generator graph restricted to it
is a treeing of degree four. Thus $X_T$ is Polish and $E_{\infty T}$
has an $F_\sigma$ presentation, as a countable union of closed action
graphs on $X_T$. Its universality is among countable treeable Borel
relations, whereas $E_\infty$ is universal among all countable Borel
relations.

For a relation $E$ on $X$, its countable product and its
Friedman--Stanley jump are respectively
\[
 (x_n)\,E^\omega\,(y_n)\ \Longleftrightarrow\
       (\forall n)\ x_n\,E\,y_n,
 \qquad
 (x_n)\,E^+\,(y_n)\ \Longleftrightarrow\
       \{[x_n]_E:n\in\N\}=\{[y_n]_E:n\in\N\}.
\]
Thus the jump discards order and multiplicity.
The relation $=^+$ is equality of nonempty countable sets of reals,
coded by sequences; using Cantor space instead of $\R$ gives a
bireducible relation. The symbols $E_0^\omega$, $E_{\infty T}^\omega$,
$E_\infty^\omega$, $E_1^\omega$, $E_1^+$, and $\ell_\infty^+$ have the
above meanings. In particular, $E_{\infty T}^\omega$ is the relation on
$X_T^\N$ given by
\[
 (x_n)\,E_{\infty T}^\omega\,(y_n)
 \quad\Longleftrightarrow\quad
 (\forall n\in\N)\ x_n\,E_{\infty T}\,y_n.
\]
It is the orbit relation of the coordinatewise continuous action of
the Polish group $(\mathbb F_2)^\N$ on $X_T^\N$. Since each coordinate
condition is $F_\sigma$, this gives a $\Pi^0_3$ presentation.
The relation is universal for countable products of treeable countable
Borel relations: choose a reduction of each factor to $E_{\infty T}$
and take the product of these maps. Reindexing $\N\times\N$ by $\N$
also shows $(E_{\infty T}^\omega)^\omega\sim_B E_{\infty T}^\omega$.

Following Gao, we use the sequence-space symbols also for their
coset relations on \emph{all} of $\R^\N$:
\[
\begin{aligned}
x\,\ell_r\,y &\ \Longleftrightarrow\
       \sum_n|x_n-y_n|^r<\infty &&(1\le r<\infty),\\
x\,c_0\,y &\ \Longleftrightarrow\
       |x_n-y_n|\longrightarrow0,\\
x\,\ell_\infty\,y &\ \Longleftrightarrow\
       \sup_n|x_n-y_n|<\infty .
\end{aligned}
\]
The same symbols denote the usual Banach spaces when used for
vectors or norms. Merely including one Banach space in another
does not give a reduction between these coset relations:
a reduction must also preserve \emph{inequivalence}.
For $t\ge0$ let
\[
 L_t=\{(a,b)\in(\R^\N)^2:(\forall i)\ |a_i-b_i|\le t\}.
\]
These sets are closed,
$\ell_\infty=\bigcup_{n\in\N}L_n$, and
$L_t\circ L_u\subseteq L_{t+u}$.

Let $C=2^\N$. The space $\Prob(C)$ of Borel probability measures
has its Polish weak topology: $\mu_j\to\mu$ means convergence
of integrals of all continuous real functions.
Write $\mu\ll\nu$ when every $\nu$-null Borel set is $\mu$-null,
and $\mu\meq\nu$ when both $\mu\ll\nu$ and $\nu\ll\mu$.
Borel isomorphisms between uncountable standard Borel sample spaces
transport this equivalence relation.
Its $\Pi^0_3$ description follows from uniform absolute continuity:
\[
\mu\ll\nu\ \Longleftrightarrow\
\forall r\ge1\ \exists s\ge1\ \forall B\text{ clopen in }C:
\quad \nu(B)<2^{-s}\ \Longrightarrow\ \mu(B)\le2^{-r}.
\]
For fixed $r,s,B$ the condition is closed.
The clopen algebra suffices by regularity and approximation.
The corresponding definitions similarly exhibit $c_0$,
$E_0^\omega$, $E_1^\omega$, $E_1^+$, and $\ell_\infty^+$
as $\Pi^0_3$ in their standard presentations; see \cite{Gao}.

\subsection{Measure-theoretic notation}
For a probability $\mu$, $L^r(\mu)$ consists of measurable real
functions, identified when equal almost everywhere, with finite
$\int|f|^r\,d\mu$; its norm is
$(\int|f|^r\,d\mu)^{1/r}$. In particular $L^2(\mu)$ is a
Hilbert space with inner product $\int fg\,d\mu$.
If $\nu\ll\mu$, its Radon--Nikodym density $d\nu/d\mu$
is the nonnegative measurable function satisfying
$\nu(B)=\int_B(d\nu/d\mu)\,d\mu$.
Mutual absolute continuity means that this density is positive
almost everywhere. We write $f^2\mu$ for the measure with density
$f^2$, which is a probability when $\|f\|_2=1$.

A measure is \emph{atomless} if every singleton has measure zero,
and has \emph{full support} if every nonempty open set has positive
measure. For probabilities $\mu,\nu$, the Hellinger distance is
\[
 \Hel(\mu,\nu)=
 \left\|\sqrt{d\mu/d\lambda}-\sqrt{d\nu/d\lambda}\right\|_{L^2(\lambda)},
\]
where any probability dominating both may be used; the value is
independent of this choice. We use the total variation norm with
the convention
$\|\mu-\nu\|_{\mathrm{TV}}=\int|d\mu/d\lambda-d\nu/d\lambda|\,d\lambda$,
so it is at most $2$ for probabilities.
The elementary inequality
$\Hel(\mu,\nu)^2\le\|\mu-\nu\|_{\mathrm{TV}}$
will be used below.

\subsection{Graphs and category}
A \emph{Borel graph} is a symmetric irreflexive Borel subset of
$X\times X$. Its graph distance $d_G(x,y)$ is the shortest path
length, with value $\infty$ between different components.
Bounded finite degree makes bounded graph neighborhoods of Borel
sets Borel, by the Lusin--Novikov theorem.
We shall use without proof the finite coloring theorem
\cite[Proposition~4.6]{KST}: a Borel graph of degree at most
$d<\infty$ has a Borel proper coloring with $d+1$ colors.

A set is \emph{meager} if it is a countable union of nowhere dense
sets; its complement is \emph{comeager}.
A set has the \emph{Baire property} if it differs from an open set
by a meager set. We use standard category results from \cite{Gao}:
Borel sets have the Baire property; a Borel map between Polish
spaces is continuous on a comeager subspace; continuous open maps
pull back comeager sets to comeager sets; and the Kuratowski--Ulam
theorem gives comeager sections of a comeager subset of a product.
An equivalence relation is \emph{generically ergodic} if every
invariant set with the Baire property is meager or comeager.
A continuous action all of whose orbits are dense has this property.

\subsection{Spectral gap and scalar Poincare inequalities}
A probability-measure-preserving action is \emph{ergodic} if every
invariant measurable set has measure zero or one.
A probability-measure-preserving action
$\Gamma\curvearrowright(X,\mu)$ has \emph{spectral gap} if there are
a finite symmetric set $S\subseteq\Gamma$ and $c>0$ such that
\[
 \sum_{s\in S}\|u\circ s-u\|_2^2
       \ge c\|u\|_2^2
 \quad\text{whenever }u\in L^2(X,\mu),\ \int u\,d\mu=0.
\]
This inequality implies ergodicity, by applying it to
$u=1_A-\mu(A)$ for an invariant measurable set $A$.
A \emph{median} of a real random variable $v$ is a number $m$
with $\mu(v>m)\le1/2$ and $\mu(v<m)\le1/2$.
Every finite real random variable has a finite median.
The standard scalar $L^r$ form of spectral gap gives, for each
$1\le r<\infty$, a finite symmetric set $S_r$ and $C_r<\infty$ with
\begin{equation}\label{eq:scalar-poincare}
 \int |u|^r\,d\mu
 \le C_r\sum_{s\in S_r}\int |u(sx)-u(x)|^r\,d\mu(x)
\end{equation}
for every bounded real $u$ with median zero.
We use the equivalence of scalar spectral gaps in the different
$L^r$ spaces from Bader--Furman--Gelander--Monod
\cite[Remark~4.3]{BFGM}.
Their formulation is modulo invariant vectors, which for an
ergodic action are constants. For median-zero $u$,
$\|u\|_r\le(1+2^{1/r})\inf_{a\in\R}\|u-a\|_r$,
so it gives \eqref{eq:scalar-poincare}.
The constants are independent of the number of coordinates when
this scalar inequality is summed over a finite vector.
The Bernoulli action of $\mathbb F_2$ with fair product measure
has spectral gap; see \cite[Remark~6.4]{KidaTuckerDrob}.

\subsection{Property \texorpdfstring{$(\mathrm T)$}{(T)} and treeable targets}
\label{sec:property-T}
A countable group $\Gamma$ has \emph{Kazhdan's property
$(\mathrm T)$} if every unitary representation $\pi$ with almost
invariant unit vectors has a nonzero invariant vector. Here almost
invariance means that for every finite $Q\subseteq\Gamma$ and
$\varepsilon>0$ there is a unit vector $v$ satisfying
$\|\pi(g)v-v\|<\varepsilon$ for all $g\in Q$.
The groups $\mathrm{SL}_n(\Z)$, $n\ge3$, have this property;
see \cite[Section~3.7]{JKL}.

Let $R$ be a countable Borel equivalence relation on a probability
space $(X,\mu)$ and let $F$ be a Borel equivalence relation on $Y$.
We say that $(R,\mu)$ is \emph{$F$-ergodic} if every Borel
homomorphism $f:R\to F$ maps a $\mu$-conull Borel set into one
$F$-class. Ordinary ergodicity is the
case where $F$ is equality on $2^\N$. The following established
result is used without reproducing its rigidity proof.

\begin{theorem}[Adams--Spatzier; Hjorth--Kechris]
\label{thm:propertyT-treeable}
Let a countable group $\Gamma$ with property $(\mathrm T)$ act by
probability-preserving Borel transformations on $(X,\mu)$, where
$\mu$ is atomless and the action is ergodic. Its orbit relation $R$
is $F$-ergodic for every treeable countable Borel relation $F$.
Consequently $R\nleq_B F^\omega$ for every such $F$.
\end{theorem}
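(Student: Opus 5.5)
The paper quotes the first assertion, $F$-ergodicity of $(R,\mu)$ for every treeable countable Borel $F$, from Adams--Spatzier and Hjorth--Kechris, so I take it as given. The task is to derive the consequence $R\nleq_B F^\omega$. The plan is to argue by contradiction, applying the ergodicity coordinatewise and then using atomlessness.

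Suppose $f:X\to Y^\N$ is a Borel reduction of $R$ to $F^\omega$. Write $f(x)=(f_n(x))_{n\in\N}$. Each coordinate $f_n:X\to Y$ is Borel, being $f$ followed by a continuous projection. Each $f_n$ is also a homomorphism from $R$ to $F$: if $x\,R\,x'$, then $f(x)\,F^\omega\,f(x')$, so $f_n(x)\,F\,f_n(x')$ for every $n$.

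Apply $F$-ergodicity to each $f_n$. This gives a $\mu$-conull Borel set $X_n$ on which $f_n$ takes values in a single $F$-class. Put $X'=\bigcap_n X_n$; it is conull, as a countable intersection of conull sets. For $x,x'\in X'$ we have $f_n(x)\,F\,f_n(x')$ for all $n$, so $f(x)\,F^\omega\,f(x')$. Since $f$ is a reduction, this gives $x\,R\,x'$. Hence $X'$ lies inside a single $R$-class.

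This is impossible. Every $R$-class is an orbit of the countable group $\Gamma$, so it is countable. A countable set has $\mu$-measure zero because $\mu$ is atomless. But $X'$ is conull, so $\mu(X')=1$, a contradiction. Therefore no such reduction exists.

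There is no real obstacle at any step. The only points needing care are these.
\begin{itemize}
\item The hypothesis on $F$ is exactly what the quoted ergodicity requires, namely $F$ treeable countable Borel. It is applied to the factor $F$, not to $F^\omega$.
\item The coordinate maps are Borel homomorphisms.
\item Atomlessness is what excludes a conull countable set.
\end{itemize}
The same argument also gives $R\nleq_B F$ directly, by taking the constant sequence.
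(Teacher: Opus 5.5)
Your argument is correct and is essentially the paper's own. The paper likewise takes the $F$-ergodicity assertion as given from Adams--Spatzier and Hjorth--Kechris. It then derives $R\nleq_B F^\omega$ by intersecting the conull sets obtained for the countably many coordinate homomorphisms, so that a reduction would put a conull set inside one countable, hence null, $R$-class.
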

The first assertion is the formulation of the Adams--Spatzier
methods \cite{AdamsSpatzier} isolated in
\cite[Theorem~10.5]{HK1996}; see also
\cite[Theorem~3.7]{Calderoni}.
The countable-product conclusion is explicitly stated in
\cite[Section~3.7, immediately after Theorem~3.31]{JKL}.
It follows by intersecting the conull sets obtained for the countably
many coordinate homomorphisms: any proposed reduction would put a
conull set into one countable source class.

\subsection{Polishable subgroups}
A Polish group is a group with a Polish topology making its group
operations continuous. A subgroup of an ambient Polish group is
Polishable here if it admits a Polish group topology with continuous
inclusion. We use the Lusin--Souslin theorem and automatic continuity
of Borel homomorphisms between Polish groups
\cite[Theorems~1.3.1 and~2.3.3]{Gao}.
\begin{lemma}\label{lem:unique-topology}
Suppose a subgroup $H$ of a Polish group $K$ is equipped with two Polish
group topologies, and both inclusion maps into $K$ are continuous.
Then the two topologies on $H$ coincide.
\end{lemma}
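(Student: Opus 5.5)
The plan is to compare the two topologies through the identity map. Write $\tau_1,\tau_2$ for the two Polish group topologies on $H$ and $\iota_1,\iota_2$ for the inclusions $(H,\tau_i)\to K$, which are continuous by hypothesis. We show that $\id:(H,\tau_1)\to(H,\tau_2)$ is Borel. Since it is a group homomorphism between Polish groups, the automatic continuity theorem \cite[Theorem~2.3.3]{Gao} will then make it continuous. By symmetry the inverse identity map is also continuous, so $\tau_1=\tau_2$.

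To prove Borel measurability, we use the Lusin--Souslin theorem \cite[Theorem~1.3.1]{Gao}. The map $\iota_1$ is continuous and injective on the Polish space $(H,\tau_1)$. Hence for every $\tau_1$-Borel set $A\subseteq H$, the image $\iota_1(A)=A$ is Borel in $K$. The same holds for $\tau_2$. In particular, for every $\tau_2$-open set $U$, the set $U$ is Borel in $K$. It then suffices to show that $U$ is $\tau_1$-Borel. This follows because $U=\iota_1^{-1}(U)$, and $\iota_1$ is continuous, so preimages of Borel subsets of $K$ are $\tau_1$-Borel. Thus the $\tau_2$-open sets are $\tau_1$-Borel. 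This is exactly the Borel measurability of $\id:(H,\tau_1)\to(H,\tau_2)$. Symmetrically, the $\tau_1$-Borel and $\tau_2$-Borel structures on $H$ coincide with the trace of the Borel structure of $K$.

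Some care is needed in the application of Lusin--Souslin: it gives that the image of a Borel set under a continuous injection from a Polish space is Borel. The set $H$ itself is Borel in $K$ by the same theorem, so the trace statement makes sense. Once measurability is established, the automatic continuity step is standard: a Borel (indeed Baire-measurable) homomorphism between Polish groups is continuous. The only point needing checking is that the identity is a homomorphism for the same group operation in both topologies, which holds because both topologies are group topologies on the same subgroup $H$. I do not expect a serious obstacle. The one thing to verify is that the cited forms of Lusin--Souslin and automatic continuity in \cite{Gao} match these hypotheses exactly: Polish domain, a continuous injection into a Polish space, and a Borel homomorphism.
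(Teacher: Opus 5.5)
Your proposal is correct and follows the paper's proof: Lusin--Souslin identifies both Borel structures on $H$ with the one inherited from $K$, so the two identity maps are Borel homomorphisms between Polish groups. Automatic continuity then makes both identity maps continuous, so the topologies coincide.
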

\begin{proof}
By Lusin--Souslin, each inclusion identifies the Borel structure on $H$
with the Borel structure inherited from $K$. The identity homomorphisms
between the two Polish versions of $H$ are therefore Borel. Automatic
continuity makes each identity continuous.
\end{proof}
A subgroup admitting such a topology is often called \emph{Polishable}.
We will specify the complete metrics explicitly, rather than assuming
that the topology inherited from the ambient group is Polish.

\subsection{Circle-valued sequence groups}
Write
\[
 \T=\R/\Z,\qquad
 |t|_{\T}=\min_{a\in\Z}|\widetilde t-a|,
\]
where $\widetilde t$ is any real representative of $t$.
Let $K=\T^\N$ with its compact product topology. Define
\begin{align*}
 H_p&=\left\{h\in K:\sum_n|h_n|_\T^p<\infty\right\},
 &d_p(h,k)&=\left(\sum_n|h_n-k_n|_\T^p\right)^{1/p},\\
 H_0&=\{h\in K:|h_n|_\T\to0\},
 &d_0(h,k)&=\sup_n|h_n-k_n|_\T.
\end{align*}
For a subgroup $H\le K$, its coset relation is
\[
 x\mathrel{K/H}y\quad\Longleftrightarrow\quad x-y\in H.
\]

\begin{lemma}\label{lem:Polish-H}
For $1\le p<\infty$, $(H_p,d_p)$ and $(H_0,d_0)$ are Polish groups,
and their inclusions into $K$ are continuous.
\end{lemma}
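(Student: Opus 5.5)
The plan is to verify directly that each $d$ is a translation-invariant complete separable metric, that the group operations are continuous, and that the inclusion into $K$ is continuous.

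\emph{Metric and group structure.} Since $|\cdot|_\T$ is a translation-invariant, symmetric, subadditive norm on $\T$, the sequence $(|h_n-k_n|_\T)_n$ is a nonnegative real sequence. For $h,k\in H_p$ the triangle inequality in $\T$ gives $|h_n-k_n|_\T\le|h_n|_\T+|k_n|_\T$. Minkowski's inequality in $\ell_p$ then shows that $H_p$ is closed under subtraction and that $d_p$ satisfies the triangle inequality. The same argument with the supremum works for $H_0$ and $d_0$.

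Both metrics are invariant under translation, since $d(h+g,k+g)=d(h,k)$, and under inversion, since $d(-h,-k)=d(h,k)$. Hence
\[
d(h+k,h'+k')\le d(h,h')+d(k,k'),
\]
so addition and inversion are continuous.

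\emph{Continuity of the inclusion.} The inclusion into $K$ is continuous because each coordinate map is $1$-Lipschitz: $|h_n-k_n|_\T\le d(h,k)$.

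\emph{Separability.} I would use finitely supported sequences with rational coordinates. Given $h\in H_p$, truncate it to $(h_0,\dots,h_{N-1},0,0,\dots)$. The error is $(\sum_{n\ge N}|h_n|_\T^p)^{1/p}$, which tends to $0$ as $N\to\infty$. One then approximates the finitely many remaining coordinates by rational ones. The same truncation works in $H_0$, since $\sup_{n\ge N}|h_n|_\T\to0$.

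\emph{Completeness.} This is the main step, though it is routine. Let $(h^j)$ be $d$-Cauchy. Each coordinate sequence $(h^j_n)_j$ is Cauchy in the complete space $\T$, so it converges to some $h_n$.

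For $H_p$, fix $\varepsilon>0$ and choose $J$ such that $d_p(h^j,h^i)<\varepsilon$ for $i,j\ge J$. For each finite $M$,
\[
\sum_{n<M}|h^j_n-h^i_n|_\T^p<\varepsilon^p .
\]
Letting $i\to\infty$ coordinatewise and then $M\to\infty$ gives $\sum_n|h^j_n-h_n|_\T^p\le\varepsilon^p$. Thus $h-h^j$ has finite $p$-sum, so $h=h^j+(h-h^j)\in H_p$ by closure under addition, and $d_p(h^j,h)\to0$.

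For $H_0$, the same argument yields $\sup_n|h^j_n-h_n|_\T\le\varepsilon$ for $j\ge J$. It remains to show $h\in H_0$. Fix such a $j$ and choose $N$ with $|h^j_n|_\T<\varepsilon$ for $n\ge N$. Then $|h_n|_\T<2\varepsilon$ for $n\ge N$, so $|h_n|_\T\to0$ and $h\in H_0$.

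The only point that needs care is that $|\cdot|_\T$ is merely a norm-like function on a compact group, not the absolute value on $\R$. However, it satisfies exactly the triangle inequality and translation invariance used above, so everything proceeds as for the sequence spaces $\ell_p$ and $c_0$.
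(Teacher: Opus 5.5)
Your proposal is correct and follows essentially the same route as the paper. The paper uses the circle triangle inequality plus Minkowski for the metric, and coordinatewise limits with Fatou's lemma plus closure under addition for completeness of $d_p$; your limit over finite partial sums is Fatou for counting measure written out. It likewise uses a uniform-limit argument for $d_0$, finitely supported rational sequences for separability, and invariance of the metrics for continuity of the group operations, with your $1$-Lipschitz coordinate bound making the paper's one-line continuity of the inclusion explicit.
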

\begin{proof}
The triangle inequality on the circle, followed by Minkowski's inequality,
gives the triangle inequality for $d_p$. If $(h^j)$ is $d_p$-Cauchy, it
converges coordinatewise in $K$ to some $h$. Fatou's lemma shows that,
for every sufficiently large $j$, $d_p(h^j,h)$ is small. Applying the
triangle inequality with one fixed $h^j\in H_p$ puts $h$ in $H_p$.
Thus the metric is complete.

A $d_0$-Cauchy sequence has a uniform coordinatewise limit. A uniform
limit of null sequences is null, so $d_0$ is complete as well. In both
cases finite-support sequences with rational circle coordinates form a
countable dense set. Addition and inversion are continuous because the
metrics are invariant, and each coordinate map is continuous.
\end{proof}

\begin{lemma}\label{lem:compact-replacement}
There are continuous reductions $K/H_p\leB \ell_p$ and
$K/H_0\leB c_0$.
\end{lemma}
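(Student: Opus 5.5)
Our plan is to lift circle coordinates to real coordinates by a fixed continuous section and then "unwrap" the integer ambiguity by a device that makes integer jumps visible to $\ell_p$ (resp.\ $c_0$) inequivalence. The naive map $x\mapsto(\widetilde x_n)$ with $\widetilde x_n\in[0,1)$ is neither continuous nor a reduction: two circle points close to $0$ from opposite sides have lifts differing by almost $1$. We will therefore use a continuous embedding of the circle into the plane and spread the coordinates out, rather than a lift.

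Concretely, let $\phi:\T\to\R^2$ be $\phi(t)=(\cos2\pi t,\sin2\pi t)/(2\pi)$. Then $\phi$ is continuous and bi-Lipschitz for the metric $|\cdot|_\T$, since chord length and arc length are comparable: there are constants $0<a\le b$ with
\[
 a|s-t|_\T\le|\phi(s)-\phi(t)|\le b|s-t|_\T .
\]
Define $f:K\to\R^\N$ by interleaving: $f(x)_{2n}$ and $f(x)_{2n+1}$ are the two coordinates of $\phi(x_n)$. The map $f$ is continuous because each output coordinate depends continuously on a single input coordinate. The main step is to check the reduction property. We have
\[
 \sum_n\bigl(|f(x)_{2n}-f(y)_{2n}|^p+|f(x)_{2n+1}-f(y)_{2n+1}|^p\bigr)
\]
comparable, up to constants depending only on $p$ (from the norm equivalence on $\R^2$), to $\sum_n|\phi(x_n)-\phi(y_n)|^p$. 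This is in turn comparable to $\sum_n|x_n-y_n|_\T^p$. Hence $f(x)\,\ell_p\,f(y)$ iff $x-y\in H_p$. For $c_0$ the same pointwise two-sided bound shows that $f(x)-f(y)\to0$ iff $|x_n-y_n|_\T\to0$, i.e.\ iff $x-y\in H_0$.

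The one point needing care, and the expected obstacle, is that the coset relation on $K$ uses the group difference $x-y$. The bi-Lipschitz estimate must therefore be applied to $|x_n-y_n|_\T$, the circle distance between $x_n$ and $y_n$, which is exactly the intrinsic metric. The chord--arc comparison on the circle gives precisely this, with $a=2/\pi$ times the normalization and $b=1$. So no lifting issue arises, and the proof reduces to this elementary inequality plus bookkeeping of constants.
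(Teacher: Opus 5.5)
Your proposal is correct and is essentially the paper's argument: the paper uses the same map listing $\cos 2\pi x_n,\sin 2\pi x_n$ for each $n$ (your extra factor $1/(2\pi)$ is harmless). It likewise combines the chord--arc bounds $4|s-t|_{\T}\le 2\sin(\pi|s-t|_{\T})\le 2\pi|s-t|_{\T}$ with norm equivalence on $\R^2$ for both the $\ell_p$ and $c_0$ cases. The only blemish is your opening remark about lifting via a ``continuous section'', which cannot exist for $\R\to\T$ and is never used, so it should be deleted.
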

\begin{proof}
Use the map $b:K\to\R^\N$ that lists the two coordinates
$\cos(2\pi x_n),\sin(2\pi x_n)$ for each $n$. For $s,t\in\T$,
\[
 \left\|(\cos2\pi s,\sin2\pi s)
       -(\cos2\pi t,\sin2\pi t)\right\|_2
 =2\sin\bigl(\pi|s-t|_\T\bigr).
\]
Since $0\le|s-t|_\T\le1/2$,
\[
 4|s-t|_\T\le2\sin\bigl(\pi|s-t|_\T\bigr)
             \le2\pi|s-t|_\T.
\]
The equivalence of the Euclidean norm and the $p$-norm on $\R^2$
gives the summability assertion. The same inequalities give the
null-sequence assertion.
\end{proof}

\subsection{Kernels and quasi-invariance}
Every compact metrizable group has normalized Haar probability,
invariant under translations. A Borel map $x\mapsto\mu_x\in\Prob(Y)$
is a probability kernel. With a probability $\lambda$ on its parameter
space it defines the joint probability
\[
 M(A)=\int\mu_x(A_x)\,d\lambda(x),\qquad
 A_x=\{y:(x,y)\in A\},
\]
which we write $M(dx,dy)=\lambda(dx)\mu_x(dy)$.
The monotone-class theorem gives measurability of the integrand.
We use the elementary density estimates
\[
 \|u^2-v^2\|_1\leq\|u-v\|_2(\|u\|_2+\|v\|_2),
 \qquad |\sqrt a-\sqrt b|^2\leq|a-b|.
\]
Thus convergence of square-root densities in $L^2$ implies
convergence of the measures in total variation.
\subsection{Nonsingular actions and conditional measures}
Suppose a group acts on a standard Borel space $Z$ by Borel bijections.
For a probability $\mu$ and a group element $g$, its pushforward is
$g_*\mu(B)=\mu(g^{-1}B)$. The subgroup
\[
 \QI(\mu)=\{g:g_*\mu\ME\mu\}
\]
is the \emph{quasi-invariance subgroup}. The action is \emph{nonsingular}
if every group element belongs to this subgroup. It is \emph{ergodic}
if every Borel set invariant modulo null sets has measure zero or has
null complement. These definitions also apply to sigma-finite measures.

A \emph{disintegration} of $\mu$ over a Borel map $q:Z\to Y$ to a
standard Borel space is a probability kernel $y\mapsto\nu_y$ such that,
writing $\eta=q_*\mu$, one has
\[
 \mu(B)=\int_Y\nu_y(B)\,d\eta(y),\qquad
 \nu_y(q^{-1}\{y\})=1\quad\text{for almost every }y.
\]
It describes conditional probabilities on the fibers of $q$. The
countable-group ergodic-decomposition theorem used later supplies such
a quotient with ergodic nonsingular conditional measures.

\subsection{Operators and representations}
The standard Borel structure on $B(\mathcal H)$ is generated by the
matrix coefficients relative to a fixed orthonormal basis. On each
norm-bounded ball it agrees with the Borel structures of the usual
strong and weak operator topologies. For $S,T\in B(\mathcal H)$,
\[
 S\UE T\quad\Longleftrightarrow\quad
 S=UTU^*\text{ for some unitary }U.
\]
For a countable discrete group $\Gamma$, $\Rep(\Gamma,\mathcal H)$
is the space of homomorphisms into the unitary group, with pointwise
strong convergence. A unitary equivalence of representations uses the
same unitary for every group element. A unital representation of a
unital $C^*$-algebra is a unital $*$-homomorphism into $B(\mathcal H)$;
its unitary equivalences are defined in the same way.

The operator-algebra facts used later concern semifinite factors.
A \emph{von Neumann algebra} is a unital $*$-algebra of operators
closed in the weak operator topology. Its \emph{commutant} consists
of the operators commuting with all its elements; a \emph{factor}
has only scalar central elements. A faithful normal semifinite trace
$\tau$ assigns a possibly infinite size to positive operators and
determines the Hilbert space $L^2(M,\tau)$ by completing the
square-integrable elements in the norm $\tau(x^*x)^{1/2}$.
Left and right multiplication commute on this space.
For projections $p,q$ of finite trace in a semifinite factor,
the left modules $L^2(M,\tau)p$ and $L^2(M,\tau)q$ are unitarily
equivalent exactly when $\tau(p)=\tau(q)$. The later construction
explains this fact in terms of partial isometries. The crossed-product
and trace facts used there are stated explicitly, with references.

A semifinite factor of type $\mathrm{II}$ has no nonzero minimal
projections. It is of type $\mathrm{II}_1$ when its trace is finite on
the identity and of type $\mathrm{II}_\infty$ when that trace is infinite.
Two representations $\pi,\rho$ of $A$ are \emph{quasi-equivalent} if
there is a normal $*$-isomorphism
$\Phi:\pi(A)''\to\rho(A)''$ with $\Phi(\pi(a))=\rho(a)$ for every
$a\in A$. Unitary equivalence implies quasi-equivalence, but the
converse can fail because the representation's multiplicity, or more
generally its module dimension, is additional data.

A normal representation is one preserving bounded increasing suprema
of positive operators. We use that continuous functions on a compact
metrizable space generate its Borel sigma-algebra; consequently their
multiplication operators generate the full diagonal von Neumann algebra
for any sigma-finite Borel measure. Further crossed-product facts will
be stated at their point of use.

\section{The comparison theorem and established inputs}\label{sec:established}

\begin{theorem}[Complete comparison in the specified family]\label{thm:matrix}
Table~\ref{tab:matrix} and the parameter rules in Section~1 give all
Borel-reducibility comparisons within the stated benchmark family.
\end{theorem}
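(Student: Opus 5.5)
The plan is bookkeeping over the table, organized so that each cell is certified by either a positive reduction or a separating invariant. Transitivity of $\leq_B$ does most of the work: every $Y$ comes from a short list of generating reductions, and every $N$ comes from a short list of generating nonreductions. For the $Y$ entries, I would list the generators. These are the trivial chain $\id(\omega)\leq_B\id(2^\omega)\leq_B E_0$, and the chain $E_0\leq_B E_{\infty T}\leq_B E_\infty$ of Jackson--Kechris--Louveau. Taking countable powers of these gives $E_0^\omega\leq_B E_{\infty T}^\omega\leq_B E_\infty^\omega$. Next come $E_\infty^\omega\leq_B{=^+}$ (countable Borel relations are classifiable by countable structures, and $=^+$ absorbs countable powers of them) and Theorem~\ref{main:countable} for $E_\infty^\omega\leq_B c_0$. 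Gao's classical reductions give ${=^+}\leq_B{\equiv_m}$ and $E_0^\omega\leq_B{\equiv_m}$, $\ell_2$. Dougherty--Hjorth gives the $\ell_r$ chain, and Theorem~\ref{main:lp} gives $\ell_r\leq_B{\equiv_m}$ for $r\leq2$. The remaining generators are $\ell_r\leq_B\ell_\infty$ and $E_{\infty T},E_\infty\leq_B\ell_\infty$ (the latter via $E_\infty\leq_B E_1$? no---via the known reduction of countable relations to $\ell_\infty$, which I would cite from Gao's Figure~15.1). Finally $E_1\leq_B E_1^\omega\leq_B E_1^+\leq_B\ell_\infty^+$ and $E\leq_B E^+$ monotonicity of the jump. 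I would close this list under composition and check that it produces exactly the $Y$ cells.

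For the $N$ entries, I would pick for each source vertex a small set of \emph{minimal} non-targets, and propagate the failures downward through the $Y$ closure. A nonreduction $A\nleq_B B$ yields $A'\nleq_B B'$ whenever $A\leq_B A'$ and $B'\leq_B B$. The tools sort as follows:
\begin{itemize}
\item Cardinality and smoothness dichotomies (Silver, Glimm--Effros) handle the first three rows and columns.
\item Turbulence (Hjorth) gives that $c_0$, $\ell_r$, $\equiv_m$, and $\ell_\infty^+$-level relations do not reduce to $=^+$ or to anything classifiable by countable structures. This kills the lower-left block against columns 01--09.
\item Kechris--Louveau's $E_1$ obstruction gives $E_1$ not reducible to any Polish group orbit relation. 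This covers columns 01--14, since $c_0,\ell_r,\equiv_m$ are orbit relations, and hence covers everything above $E_1$ in the rows.
\item Essential countability arguments handle the $E_0^\omega$ and $=^+$ versus countable-relation cells. In particular $E_0^\omega\nleq_B E_\infty$, and $E_0^\omega$ versus $E_{\infty T}$.
\item Theorem~\ref{thm:propertyT-treeable} and Theorems~\ref{main:treeable}, \ref{main:treeable-power} handle the treeable rows and columns.
\item Theorem~\ref{main:countable} gives $E_\infty\inc\ell_p$.
\item Theorems~\ref{main:lp} and~\ref{main:c0} give $\ell_p,c_0\nleq_B{\equiv_m}$.
\item Theorem~\ref{main:jumps} gives $c_0,{\equiv_m}\nleq_B\ell_\infty^+$. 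This is what makes rows 10 and 14 all $N$, since every column right of them reduces to $\ell_\infty^+$.
\item The $\ell_\infty$ column and row are settled by the fact that $\ell_\infty$ is $K_\sigma$. So $c_0$, $\equiv_m$, $E_1^\omega$ and $=^+$ do not reduce to it (Rosendal's $K_\sigma$ results, together with the non-$K_\sigma$-reducibility of $E_0^\omega$ and $=^+$). Conversely, $\ell_\infty\nleq_B\ell_p$, ${\equiv_m}$ and $E_1^+$ by the orbit obstruction and by Kechris--Louveau.
\end{itemize}

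Next, for the parameter rules, I would argue that all cited and proved facts are uniform in $r$. Rule 1 is Dougherty--Hjorth. For rule 2, each $\ell_r$ with $r\leq2$ lies between $\ell_1$ and $\ell_2$ by rule 1, and rows 11 and 12 have identical entries off the $\ell$-block. Hence every $Y$ in row 12 pulls back to $\ell_r$ and every $N$ in row 11 pushes forward, and symmetrically for columns. For rule 3, the three theorems used for $\ell_p$ (Theorems~\ref{main:countable}, \ref{main:treeable}, \ref{main:lp}) are stated for all $p$ in the range, and the positive facts $\ell_p\leq_B\ell_\infty,\ell_\infty^+$ hold for all finite $p$.

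The main obstacle is not any single cell but the completeness check. I must verify that every one of the $19\cdot18$ off-diagonal cells is forced by the generators, and that no $Y$ is claimed without a reduction. The delicate spots are the $E_{\infty T}^\omega$ row and column, the $\ell_\infty$ row and column, and the $E_1^\omega$ versus $\ell_\infty$ cells. To manage this, I would present the argument as a Hasse-style diagram of generating $Y$'s together with a list of maximal $N$-pairs $(A,B)$, meaning pairs where $A$ is minimal and $B$ is maximal. I would then check mechanically that the downward and upward closures cover the complement of the transitive closure of the $Y$'s.
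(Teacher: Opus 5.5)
Your strategy, which closes a list of generating reductions and nonreductions under the propagation rule ($R\nleq_B S$, $R\leq_B E$, $F\leq_B S$ imply $E\nleq_B F$), is exactly the paper's, and your handling of the parameter rules is fine. The gap is in your inventory of obstructions. Your listed tools are cardinality, smoothness, Hjorth's turbulence against targets classifiable by countable structures, the Kechris--Louveau orbit obstruction, essential countability, the $\Sigma^0_3$ obstruction for $E_0^\omega$, Miller, property $(\mathrm T)$, and the new theorems. With these, seventeen $N$ cells of the table cannot be certified, and propagation cannot reach them:
\begin{itemize}
\item $\ell_r\nleq_B c_0$ (three cells), and hence $\equiv_m\nleq_B c_0$. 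The only member of the family above $c_0$ is $c_0$ itself. Everything strictly below $\ell_1$, namely $\id(\omega)$, $\id(2^\omega)$ and $E_0$, reduces to $c_0$. So a direct argument is unavoidable. Turbulence says nothing here, because $c_0$ is not classifiable by countable structures. The paper cites Hjorth's theorem that $c_0$ is incomparable with each $\ell_r$.
\item ${=^+}\nleq_B E_\infty^\omega$ and ${=^+}\nleq_B c_0$. Everything strictly below $=^+$ in the family reduces to $E_\infty^\omega$, hence to $c_0$, so none of your obstructions propagates to these cells. The paper uses pinnedness. The relation $=^+$ is not pinned, whereas orbit relations of Polish groups with a complete left-invariant metric, such as $E_\infty^\omega$ and $c_0$, are pinned.
\item $\ell_r\nleq_B E_1,E_1^\omega,E_1^+$ (nine cells), and $\ell_\infty,\ell_\infty^+\nleq_B E_1^+$. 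Again everything strictly below $\ell_1$ reduces to $E_1$. Ordinary turbulence does not apply, since $E_1^+$ is not classifiable by countable structures. The paper uses Kanovei--Reeken's extension of turbulence, $\ell_r\nleq_B E_1^+$, and all these cells then follow by propagation. For the $E_1$ column alone, the Kechris--Louveau dichotomy below $E_1$ together with turbulence would also suffice.
\end{itemize}
In particular, your justification of $\ell_\infty\nleq_B E_1^+$ ``by the orbit obstruction and Kechris--Louveau'' cannot work. Both $E_1\leq_B\ell_\infty$ and $E_1\leq_B E_1^+$ hold, so neither relation reduces to a Polish-group orbit relation, and that obstruction cannot separate them.

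Two smaller repairs are needed. First, your phrase ``$E_0^\omega\leq_B{\equiv_m},\ell_2$'' must not mean $E_0^\omega\leq_B\ell_2$. That is false: the $E_0^\omega$ row has $N$ under $\ell_2$, because $\ell_2$ is $F_\sigma$. Used as a generator, it would put wrong $Y$'s into your closure, e.g.\ $E_0^\omega\leq_B\ell_p$. Second, on the positive side your list omits $E_0\leq_B E_1$, $E_0\leq_B\ell_1$ and $E_1\leq_B\ell_\infty$; the last is also needed for $E_1^+\leq_B\ell_\infty^+$. Without them, cells such as $E_0\leq_B\ell_2$ and $E_0^\omega\leq_B E_1^\omega$ are not generated. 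Once these classical inputs are added, the mechanical closure check you describe goes through as in the paper.
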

The proof combines the established results below with the additional
comparisons proved in Sections~\ref{sec:sequence-measure}--\ref{sec:jumps}.
The roadmap for the classical comparisons is Gao's Figure~15.1
\cite[Section~15.2]{Gao}.
Throughout this section, $1\leq r,s<\infty$; the vertex $\ell_p$ in the
matrix denotes a fixed exponent $p>2$.

\subsection{Reductions and operations}

The elementary part of the hierarchy is
\[
 \operatorname{id}(\omega)<_B\operatorname{id}(2^\omega)<_B E_0.
\]
The relation $E_0$ reduces to $E_1$ and $\ell_1$, and
\[
 E_0<_B E_{\infty T}<_B E_\infty.
\]
The strict treeable comparisons are classical
\cite[Section~3.5]{JKL}. In particular, treeability is downward closed
among countable Borel relations \cite[Proposition~3.3]{JKL}, whereas
there are non-treeable countable Borel relations
\cite[Corollary~3.28]{JKL}.
Taking products and using the diagonal maps gives
\[
 E_0^\omega\leq_B E_\infty^\omega,
 \qquad E_0^\omega\leq_B E_1^\omega.
\]
Countable universality and the enumeration of countable Borel classes
also give $E_\infty^\omega\leq_B{=^+}$: each coordinate class is
enumerated and tagged by its coordinate.  These are among the established
arrows summarized by Gao \cite[Figure~15.1]{Gao}.

Dougherty and Hjorth proved the complete finite-exponent comparison
\cite[Theorem~1.1]{DH}:
\begin{equation}\label{eq:dh-survey}
 \ell_r\leq_B\ell_s\quad\Longleftrightarrow\quad r\leq s.
\end{equation}
In particular, $\ell_1<_B\ell_2<_B\ell_p$ for the fixed $p>2$.
These reductions are not supplied by mere inclusion of the corresponding
Banach spaces.  Rosendal's universality theorem for $K_\sigma$
equivalence relations \cite{Rosendal}, in Gao's formulation
\cite[Theorem~8.4.2]{Gao}, gives
\[
 E_1\leq_B\ell_\infty,
 \qquad E_\infty\leq_B\ell_\infty,
 \qquad \ell_r\leq_B\ell_\infty.
\]
Here one uses the standard compact models of the source relations.
Universality for $K_\sigma$ relations does not assert universality for
all $F_\sigma$ relations on arbitrary Polish spaces.

The product-measure construction based on Kakutani's theorem \cite{Kakutani} gives
$\ell_2\leq_B{\equiv_m}$; the atomic-measure construction gives
${=^+}\leq_B{\equiv_m}$
\cite[Section~8.5]{Gao}.
For products and jumps, we use their monotonicity under Borel
reducibility, together with $E\leq_B E^\omega$ and $E\leq_B E^+$.
The tagged-product reduction
\cite[Exercise~15.2.1]{Gao} yields $E_1^\omega\leq_B E_1^+$.
Consequently,
\[
 {=^+}\leq_B E_1^+\leq_B\ell_\infty^+,
 \qquad E_1^\omega\leq_B E_1^+.
\]
The strict jump theorem supplies
$\ell_\infty<_B\ell_\infty^+$
\cite[Theorem~8.3.6]{Gao}.
Theorem~\ref{thm:countable-czero-comparisons} below adds
$E_\infty^\omega\leq_B c_0$, and hence also
$E_\infty\leq_B c_0$.

\subsection{The nonreducibility obstructions}

Several different obstructions are needed; the Borel class of a relation
alone does not determine its position.  Cardinality separates the two
identity relations, and the nonsmoothness of $E_0$ separates it from
$\operatorname{id}(2^\omega)$ \cite{Gao}.

Hjorth's turbulence theory rules out classification of the finite
$\ell_r$ relations and $c_0$ by countable structures
\cite{HjorthBook,Gao}.  In particular, they do not reduce to ${=^+}$ or
$E_\infty$.  Hjorth also established the incomparability of $c_0$ and
each finite $\ell_r$ \cite{HjorthBanach}.  Kanovei and Reeken's
extension of the turbulence obstruction applies to a class closed under
specified product, union, and jump operations and containing $E_1^+$
\cite{KR}.  It gives, in particular,
\[
 \ell_r\nleq_B E_1^+.
\]
We use these published results without reproducing their turbulence
arguments.

A second obstruction comes from \emph{pinnedness}.  Its general
definition concerns whether forcing can create a coherent equivalence
class with no representative in the original universe.  Only the
following established consequences are needed here: ${=^+}$ is not
pinned; pinnedness is inherited by relations Borel reducible to a pinned
relation; orbit relations of Polish groups with a complete compatible
left-invariant metric are pinned; and the relevant product closure
preserves pinnedness \cite{KR,HjorthOrbit}.  The additive Banach groups and
countable products of countable discrete groups satisfy the metric
hypothesis.  Moreover, $E_1$ has $F_\sigma$ classes and is pinned by the
class-complexity criterion of Kanovei and Reeken.  Thus
\begin{equation}\label{eq:pinned-obstructions}
 {=^+}\nleq_B E_\infty^\omega,
 \qquad {=^+}\nleq_B c_0,
 \qquad {=^+}\nleq_B E_1^\omega,
 \qquad {=^+}\nleq_B\ell_r.
\end{equation}
This formulation avoids any use of forcing machinery in the proofs
that follow.

Kechris and Louveau proved that $E_1$ does not reduce to an orbit
relation of a Polish group action \cite[Theorem~4.2]{KL}; see also
\cite[Theorem~10.6.1]{Gao}. Each of $c_0$, $\equiv_m$, and every finite $\ell_r$
is Borel reducible to such an orbit relation, so
\[
 E_1\nleq_B c_0,
 \qquad E_1\nleq_B{\equiv_m},
 \qquad E_1\nleq_B\ell_r.
\]
Gao \cite[Theorem~8.5.2]{Gao} records that $E_0^\omega$
does not reduce to any $\Sigma^0_3$ equivalence relation.
In particular,
\[
 E_0^\omega\nleq_B\ell_\infty
 \quad\hbox{and}\quad
 E_1^\omega\nleq_B\ell_\infty
\]
The second assertion also follows from $E_0^\omega\le_B E_1^\omega$.
Clemens later established stronger primeness statements
\cite{Clemens} (Corollary~3.8 in the author's arXiv version).

Miller's measure-hyperfiniteness theorem
\cite[Theorem~4]{Miller} gives another useful separation:
\begin{equation}\label{eq:miller-separation}
 E_{\infty T}\nleq_B E_1^\omega,\qquad E_\infty\nleq_B E_1^\omega.
\end{equation}
Indeed, a reduction of $E_{\infty T}$ would express the source as a
countable intersection of increasing unions of smooth relations.  Miller's theorem
would make it hyperfinite on a conull Borel set for every probability
measure.  This is incompatible with the nonhyperfinite measured orbit
relation of the free Bernoulli action of the free group on two
generators.  This deduction uses measure hyperfiniteness; it does not
assert that Miller's theorem gives Borel hyperfiniteness. The exact
decomposition and the other treeable comparisons are given in
Section~\ref{sec:treeable}.

Thomas already proved $E_\infty\nleq_B\ell_2$
\cite[Theorem~6.12]{Thomas}, crediting an observation of Kechris and
using Popa's superrigidity theorem.  Equation~\eqref{eq:dh-survey}
therefore gives $E_\infty\nleq_B\ell_r$ for $1\leq r\leq2$.
Theorem~\ref{thm:all-finite-p} below supplies a proof covering every
finite exponent, including $r>2$.

\subsection{Measure equivalence and its applications}

Theorems~\ref{main:lp} and~\ref{main:c0}, proved in
Section~\ref{sec:sequence-measure}, give
\begin{equation}\label{eq:wang-comparisons}
 \ell_r\leq_B{\equiv_m}\Longleftrightarrow r\leq2
 \quad(1\leq r\leq\infty),\qquad c_0\nleq_B{\equiv_m}.
\end{equation}
For $p>2$, $\ell_p$ and $\equiv_m$ are incomparable: the reverse
nonreduction follows from ${=^+}\leq_B{\equiv_m}$ and
\eqref{eq:pinned-obstructions}. Likewise $\equiv_m\nleq_B c_0$,
so $c_0\inc{\equiv_m}$.

\subsection{Completing the comparison matrix}

\begin{proof}[Proof of Theorem~\ref{thm:matrix}]
The remaining jump separations needed are
$c_0\nleq_B\ell_\infty^+$ and
${\equiv_m}\nleq_B\ell_\infty^+$, proved in
Theorems~\ref{thm:c0-jump} and~\ref{thm:measure-jump}.
Together with the results above and
Theorems~\ref{thm:countable-czero-comparisons} and
\ref{thm:treeable-comparisons}, together with
Theorem~\ref{thm:treeable-power-comparisons}, they determine every
entry of the displayed matrix.  The propagation rule is elementary:
if $R\nleq_B S$, $R\leq_B E$, and $F\leq_B S$, then
$E\nleq_B F$.  Thus the matrix records both cited comparisons and
their consequences under composition; it does not claim that every
entry was separately proved in the literature.

\end{proof}

The combinatorial proof of the new reduction to $c_0$ uses the finite
Borel coloring theorem: a Borel graph of maximum degree $d<\infty$
has a Borel coloring with $d+1$ colors
\cite[Proposition~4.6]{KST}.  This existing theorem is used as an
input, rather than reproved.  Kanovei explicitly asked about
$E_\infty$ versus $c_0$ and the finite $\ell_r$ relations
\cite[Question~4.7, p.~28]{Kanovei}; the two jump comparisons are
among the pairs left undecided by Gao's original diagram.

\section{Measure equivalence and sequence spaces}\label{sec:sequence-measure}
\subsection{The classical positive direction}
\begin{proposition}[Dougherty--Hjorth and Kakutani]\label{prop:positive}
For $1\leq p\leq2$, $\ell_p\leq_B{\equiv_m}$.
\end{proposition}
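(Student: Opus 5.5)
By \eqref{eq:dh-survey}, $\ell_p\leq_B\ell_2$ for every $1\le p\le2$. Since Borel reducibility is transitive, the whole statement follows once we have a single reduction $\ell_2\leq_B{\equiv_m}$. We build it from product Gaussian measures and Kakutani's dichotomy \cite{Kakutani}.

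For $x\in\R^\N$ let $\mu_x=\bigotimes_n N(x_n,1)$, a probability measure on $\R^\N$. Kakutani's theorem says two infinite products of mutually absolutely continuous factor measures are either equivalent or mutually singular. They are equivalent exactly when the product of the Hellinger affinities $\prod_n\int\sqrt{d\mu_n\,d\nu_n}$ is positive. For unit-variance Gaussians this affinity is $\exp(-(x_n-y_n)^2/8)$. So the product is positive iff $\sum_n(x_n-y_n)^2<\infty$, which gives
\[
 x\mathrel{\ell_2}y\iff\mu_x\ME\mu_y .
\]
The forward direction is the positivity case of Kakutani. The reverse direction uses the dichotomy: if the sum diverges, the measures are mutually singular, hence not equivalent. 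One small point is that the probability measures here should be nondegenerate, but singular measures are in particular non-equivalent, so the dichotomy gives exactly what we need.

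Next we move to the sample space $C=2^\N$ used for $\equiv_m$. Fix a Borel isomorphism $\phi:\R^\N\to C$. As noted in Section~\ref{sec:prelim}, such isomorphisms transport $\equiv_m$. We send $x\mapsto\phi_*\mu_x$.

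The main thing to check is that $x\mapsto\mu_x$ is Borel, even continuous, into $\Prob(\R^\N)$ with the weak topology. Integrals of continuous functions depending on finitely many coordinates are continuous in finitely many $x_n$, and such functions determine weak convergence on a product. Pushing forward by a fixed Borel map $\phi$ is a Borel operation on measure spaces, since $\nu\mapsto\nu(B)$ is Borel for every Borel $B$.

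The composite $x\mapsto\phi_*\mu_x$ is then a Borel reduction of $\ell_2$ to $\equiv_m$. Precomposing with the Dougherty--Hjorth reduction $\ell_p\leq_B\ell_2$ gives the proposition. The only real obstacle is quoting Kakutani's criterion correctly, in particular the Gaussian affinity formula. The measurability checks are routine.
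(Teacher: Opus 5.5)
Your proposal is correct and takes the same route as the paper, which composes the Dougherty--Hjorth reduction $\ell_p\leq_B\ell_2$ with the classical Kakutani product-measure reduction $\ell_2\leq_B{\equiv_m}$ (citing Gao, Section~8.5) and does not reprove either step. Your Gaussian instantiation fills in what the paper leaves to the citation: the factor affinity $\exp(-(x_n-y_n)^2/8)$, Kakutani's equivalence-or-singularity dichotomy, and the Borel transport of $x\mapsto\mu_x$ to $\Prob(2^\N)$.
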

This follows from $\ell_p\leq_B\ell_2$ \cite{DH} and the
product-measure reduction $\ell_2\leq_B{\equiv_m}$ furnished by
Kakutani's theorem \cite{Kakutani}; see \cite[Section~8.5]{Gao}.
We use this classical direction without reproving it.

\subsection{From a Borel reduction to a quasi-invariance subgroup}
The next step is where the definability of a hypothetical reduction meets
the topology of the acting group. It is essential to retain the parameter
as a coordinate of the constructed probability space.

\begin{lemma}[Equivalence of kernels]\label{lem:fibers}
Let $X,Y$ be standard Borel spaces, let $\lambda\in\Prob(X)$, and let
$x\mapsto\mu_x$, $x\mapsto\nu_x$ be probability kernels. For the joint
probabilities
\[
 M(dx,dy)=\lambda(dx)\mu_x(dy),\qquad
 N(dx,dy)=\lambda(dx)\nu_x(dy),
\]
one has
\[
 N\ll M\quad\Longleftrightarrow\quad
 \nu_x\ll\mu_x\ \text{for $\lambda$-almost every $x$}.
\]
Consequently $N\ME M$ if and only if
$\nu_x\ME\mu_x$ for $\lambda$-almost every $x$.
\end{lemma}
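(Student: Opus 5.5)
The plan is to prove both implications directly from the definition of the joint measures and the section formula $M(A)=\int\mu_x(A_x)\,d\lambda(x)$. The mutual version then follows by applying the one-sided statement twice and intersecting the two conull sets of parameters.

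For the easy direction, assume $\nu_x\ll\mu_x$ for all $x$ outside a $\lambda$-null Borel set. If $M(A)=0$ for a Borel $A\subseteq X\times Y$, then $\mu_x(A_x)=0$ for $\lambda$-almost every $x$, since the integrand is nonnegative and measurable by the monotone-class remark. For almost every $x$ we therefore also get $\nu_x(A_x)=0$, hence $N(A)=\int\nu_x(A_x)\,d\lambda=0$.

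For the converse, assume $N\ll M$. I will use a Radon--Nikodym density rather than argue set by set, because the exceptional $\lambda$-null sets could otherwise depend on the test set in $Y$, and there are uncountably many test sets.

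\begin{enumerate}
\item Let $f=dN/dM$, a nonnegative Borel function on $X\times Y$.
\item Fix a Borel $B\subseteq X$ and a Borel $D\subseteq Y$. Applying $N\ll M$ to $B\times D$ gives
\[
 \int_B\nu_x(D)\,d\lambda=\int_B\int_D f(x,y)\,d\mu_x(y)\,d\lambda(x).
\]
The inner integral is Borel in $x$ by the monotone-class argument.
\item Since $B$ is arbitrary, for each fixed $D$ we get $\nu_x(D)=\int_D f(x,\cdot)\,d\mu_x$ for $\lambda$-almost every $x$.
\item Fix a countable algebra $\mathcal D$ generating the Borel sets of $Y$; this exists because $Y$ is standard. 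Intersect the countably many resulting conull sets to obtain a single $\lambda$-conull set $X_0$ on which the identity holds for every $D\in\mathcal D$.
\end{enumerate}

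Take $x\in X_0$ with $\int f(x,\cdot)\,d\mu_x=\nu_x(Y)=1<\infty$; this holds for every $x\in X_0$ by taking $D=Y$. Both sides of the identity are then finite measures in $D$ that agree on $\mathcal D$. The $\pi$--$\lambda$ theorem extends the equality to all Borel $D$. So $\nu_x=f(x,\cdot)\,\mu_x$, and therefore $\nu_x\ll\mu_x$.

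The main obstacle is exactly this uniformization over uncountably many test sets. It is handled by the density identity together with the countable generating algebra; the rest is Fubini-type bookkeeping for kernels.
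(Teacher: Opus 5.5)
Your proof is correct and follows essentially the same route as the paper: the easy direction by integrating sections, and the converse via a nonnegative Borel version $f$ of $dN/dM$, a.e.\ identification of $\nu_x(D)$ with $\int_D f(x,\cdot)\,d\mu_x$ for each $D$ in a countable generating family containing $Y$, and uniqueness of finite measures agreeing on a $\pi$-system. The only cosmetic difference is that you use a countable generating algebra, where the paper uses a countable generating $\pi$-system containing $Y$.
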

\begin{proof}
Suppose first that the stated fiberwise absolute continuity holds. If
$M(A)=0$, nonnegativity gives $\mu_x(A_x)=0$ for almost every $x$.
Hence $\nu_x(A_x)=0$ almost everywhere, and integration gives $N(A)=0$.

Conversely, suppose $N\ll M$, and choose a nonnegative Borel version
$r(x,y)$ of $dN/dM$. Let $\mathcal A$ be a countable generating
$\pi$-system of Borel subsets of $Y$ that contains $Y$. Such a system
exists because a standard Borel space has a countable generating algebra.
For every Borel $B\in\mathcal A$ and every Borel $A\subseteq X$,
\[
 \int_A\nu_x(B)\,d\lambda(x)
 =\int_A\left(\int_B r(x,y)\,d\mu_x(y)\right)d\lambda(x).
\]
Uniqueness of integrable densities on $X$ says that the two integrands
agree almost everywhere. Intersect these conull sets for the countably
many $B\in\mathcal A$. For each $x$ in the resulting conull set,
the measure $r(x,\cdot)\mu_x$ has total mass one, by the equality for
$B=Y$, and agrees with $\nu_x$ on $\mathcal A$. Uniqueness of finite
measures on a generating $\pi$-system gives
$\nu_x=r(x,\cdot)\mu_x$, and therefore $\nu_x\ll\mu_x$.
Applying the assertion in both directions proves the last statement.
\end{proof}

\begin{proposition}[Compact averaging]\label{prop:averaging}
Let $K$ be a compact metrizable abelian group and $H\leq K$ a subgroup.
If $K/H\leB\ME$, then there is a
probability $M$ on the compact metrizable space $Z=K\times2^\N$ such
that, with
\[
 T_g(x,y)=(x+g,y),\qquad M_g=(T_g)_*M,
\]
one has
\begin{equation}\label{eq:quasi-group}
 M_g\ME M\quad\Longleftrightarrow\quad g\in H.
\end{equation}
Moreover, $g\mapsto M_g$ is weakly continuous on $K$.
\end{proposition}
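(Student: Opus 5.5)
Let $f:K\to\Prob(C)$ be a Borel reduction of $K/H$ to $\ME$, where $C=2^\N$, and write $\mu_x=f(x)$. Let $\lambda$ be normalized Haar probability on $K$. The natural candidate is the joint probability
\[
 M(dx,dy)=\lambda(dx)\,\mu_x(dy)\quad\text{on } Z=K\times C,
\]
so that the parameter $x$ is kept as a coordinate, as the remark before Lemma~\ref{lem:fibers} suggests.

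First we compute $M_g$. For Borel $A\subseteq Z$,
\[
 M_g(A)=M(T_g^{-1}A)=\int\mu_x\bigl((A-(g,0))_x\bigr)\,d\lambda(x).
\]
Substituting $x'=x+g$ and using translation invariance of $\lambda$, this becomes
\[
 M_g(dx',dy)=\lambda(dx')\,\mu_{x'-g}(dy).
\]
So $M_g$ is again a joint probability over the same base $\lambda$, now with kernel $x\mapsto\mu_{x-g}$. Lemma~\ref{lem:fibers} then gives
\[
 M_g\ME M\iff \mu_{x-g}\ME\mu_x\ \text{for $\lambda$-a.e. } x.
\]
Since $f$ is a reduction, $\mu_{x-g}\ME\mu_x$ holds iff $(x-g)-x=-g\in H$, that is, iff $g\in H$. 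This condition does not depend on $x$. Hence if $g\in H$ the equivalence holds for every $x$, and if $g\notin H$ it fails for every $x$, hence on a set of full measure. This proves \eqref{eq:quasi-group}. Measurability of the kernel $x\mapsto\mu_{x-g}$ is immediate, since it is a composition of Borel maps.

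Weak continuity of $g\mapsto M_g$ is the step that needs care, because $f$ is only Borel. For continuous $\phi$ on $Z$,
\[
 \int\phi\,dM_g=\int\!\!\int\phi(x+g,y)\,d\mu_x(y)\,d\lambda(x).
\]
Since $Z$ is compact, $\phi$ is uniformly continuous. So as $g\to g_0$, the functions $\phi(x+g,y)$ converge to $\phi(x+g_0,y)$ uniformly in $(x,y)$, and the integrals converge. This argument uses only that $T_g$ is a translation in the first coordinate and never needs continuity of $f$. The averaging over the compact group is what removes the lack of continuity of the reduction.

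The expected obstacle is therefore just this bookkeeping: checking that the change of variables preserves the base measure exactly, so that Lemma~\ref{lem:fibers} applies with the same $\lambda$. Haar invariance guarantees this.
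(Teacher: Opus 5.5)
Your proposal is correct and follows the paper's proof essentially step for step: you use the same Haar-averaged joint measure $M(dx,dy)=\lambda(dx)\mu_x(dy)$, the same change of variables giving the kernel $x\mapsto\mu_{x-g}$ over the same base $\lambda$, and the same appeal to Lemma~\ref{lem:fibers}, together with the observation that the reduction makes the fiberwise condition hold for all $x$ or for none. The only difference is cosmetic and concerns weak continuity: you argue by uniform convergence, which holds because $(x,y,g)\mapsto\phi(x+g,y)$ is uniformly continuous on the compact space $Z\times K$, whereas the paper uses pointwise convergence along sequences together with dominated convergence.
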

\begin{proof}
Let $x\mapsto\mu_x$ be the assumed Borel reduction, and let $\lambda$
be Haar probability on $K$. Define
\[
 M(dx,dy)=\lambda(dx)\mu_x(dy).
\]
Translation invariance of $\lambda$ gives
\[
 M_g(dx,dy)=\lambda(dx)\mu_{x-g}(dy).
\]
For each fixed $g$, the reduction property says that
$\mu_{x-g}\ME\mu_x$ holds for every $x$ if $g\in H$, and for no $x$
if $g\notin H$. Lemma~\ref{lem:fibers} therefore proves
\eqref{eq:quasi-group}.

If $g_j\to g$ in $K$, then $F(T_{g_j}z)\to F(T_gz)$ for every
$F\in C(Z)$ and every $z\in Z$. Dominated convergence gives
\[
 \int F\,dM_{g_j}=\int F(T_{g_j}z)\,dM(z)
 \longrightarrow\int F(T_gz)\,dM(z)=\int F\,dM_g.
\]
This is weak convergence. Both spaces are metrizable, so the sequential
statement proves continuity.
\end{proof}

The subgroup described by \eqref{eq:quasi-group} is the
\emph{quasi-invariance subgroup} of $M$ for these translations. The
proof does not require a conull set that works simultaneously for all
$g$. It also does not require a measurable choice of sets witnessing
inequivalence of pairs of fiber measures. Finally, weak continuity of
the orbit map comes from the continuous translations of one fixed
measure, although the original kernel need only be Borel.
\subsection{Recovering a compatible Hilbert-space metric}
The topology and unitary representation of quasi-invariance groups
are classical; see Host--M\'ela--Parreau \cite[Sections~5.1 and~5.3]{HMP}
and Gabriyelyan \cite[Theorems~2.1 and~2.2]{Gabri06}.
Related uniform Hilbert-embedding criteria for abelian Polish groups
are discussed in Galindo \cite[Theorems~2.2 and~2.3]{Galindo}.
For the exact compact action obtained above, we give a direct proof
of the complete invariant Hilbert metric that the argument requires. The square-root
density alone need not distinguish two group elements, so we include a
coordinate that remembers the element of the compact group.

\begin{proposition}\label{prop:Hilbert-metric}
Let $H\le K=\T^\N$ carry a Polish group topology whose inclusion into
$K$ is continuous. Suppose a probability $M$ on $Z=K\times2^\N$
satisfies \eqref{eq:quasi-group}. There is an injective map $\Psi$ from
$H$ into a separable real Hilbert space such that
\[
 D(h,k)=\|\Psi(h)-\Psi(k)\|
\]
is a complete translation-invariant metric inducing the given topology
on $H$.
\end{proposition}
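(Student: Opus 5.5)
We will build $\Psi$ from square-root densities, with an extra term that records the element of $K$. For $h\in H$ we have $M_h\ME M$, so $M_h$ has a Radon--Nikodym density relative to $M$. Set $\phi(h)=\sqrt{dM_h/dM}\in L^2(M)$, a unit vector.

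The translations $T_g$ act isometrically on $L^2$-half-densities. The map
\[
U_g\xi=\sqrt{dM_g/dM}\cdot(\xi\circ T_g^{-1})
\]
is a unitary of $L^2(M)$ for $g\in H$. It satisfies the cocycle identity $U_gU_k=U_{g+k}$, and $U_g1=\phi(g)$. Hence
\[
\|\phi(h)-\phi(k)\|=\|U_h1-U_k1\|=\|1-U_{k-h}1\|=\|\phi(k-h)-1\|.
\]
This is the Hellinger distance $\Hel(M_{k-h},M)$ and depends only on $k-h$, so it is translation invariant.

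The square-root density need not be injective, so we add the coordinate $\psi:K\to L^2(K,\lambda;\mathbb C)$ with $\psi(g)=(\chi(g)2^{-n_\chi})_\chi$, indexed by an enumeration of the characters $\chi_n$ of $K=\T^\N$. Realified, this is a translation-invariant, continuous, injective map of the compact group, since characters separate points. We then take
\[
\Psi(h)=(\phi(h),\psi(h))\in L^2(M)\oplus\ell^2,
\]
so that $D$ is an injective translation-invariant metric on $H$.

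The remaining work is to show that $D$ induces the given Polish topology $\tau$ and is complete. The plan is to prove that $(H,D)$ is itself a Polish group topology whose inclusion into $K$ is continuous, and then apply Lemma~\ref{lem:unique-topology}.

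\emph{Continuity of the inclusion.} The inclusion $(H,D)\to K$ is continuous because the $\psi$ term dominates the compact topology.

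\emph{Group topology and separability.} Invariance gives a group topology. Separability holds because $L^2(M)\oplus\ell^2$ is separable.

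\emph{Completeness.} This is the main obstacle. Take a $D$-Cauchy sequence $h_j$. Then $h_j\to g$ in $K$, and $\phi(h_j)\to\xi$ in $L^2(M)$. By the estimates of the kernel subsection, $M_{h_j}\to\xi^2M$ in total variation. By Proposition~\ref{prop:averaging}, $M_{h_j}\to M_g$ weakly, so $M_g=\xi^2M\ll M$.

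We still need $M\ll M_g$. By the Cauchy property, $\|\phi(h_j-h_i)-1\|$ is small for large $i,j$. Letting $j\to\infty$ and using translation $T_{-h_i}$ gives $M\ll M_{g}$ as well, via $\xi$ being a limit of unitary images of $1$. Concretely, the operators $U_{h_j}$ converge strongly to an isometry $V$. Its adjoint limit $U_{-h_j}1$ is also Cauchy, which yields a density for $M_{-g}$, and translating by $g$ gives $M\ll M_g$. Hence $g\in H$ by \eqref{eq:quasi-group}, and $D(h_j,g)\to0$ by the same lower semicontinuity.

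\emph{Comparison with $\tau$.} We also need $D$-convergence to be compatible with $\tau$. It suffices that the identity $(H,\tau)\to(H,D)$ is Borel. Then automatic continuity, together with the Lusin--Souslin argument of Lemma~\ref{lem:unique-topology} (both topologies are Polish group topologies with continuous inclusion into $K$), gives equality of the topologies. Borelness holds because $h\mapsto\phi(h)$ is Borel: $h\mapsto M_h$ is weakly continuous, and the densities are computed by countable martingale limits over the clopen algebra of $Z$.
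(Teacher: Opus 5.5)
Your proposal is correct and follows essentially the same route as the paper. Both proofs use square-root densities together with an injective coordinate recording the element of $K$, translation invariance of the Hellinger term, and completeness via a total-variation limit matched with the weak limit $M_g$. In both, the reversed Cauchy sequence $(-h_j)$ gives $M_{-g}\ll M$, hence $M\ll M_g$, and Lemma~\ref{lem:unique-topology} then identifies the topologies.

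Three small points. First, note explicitly that the limit $\xi$ is nonnegative, being an $L^2$-limit of nonnegative functions, so that $\xi=\phi(g)$. Second, the claimed strong convergence of $U_{h_j}$ is not needed: Cauchyness of $\phi(-h_j)$ already follows from your identity $\|\phi(h)-\phi(k)\|=\|\phi(k-h)-1\|$. Third, the separate Borelness check is redundant, because Lemma~\ref{lem:unique-topology} already covers the comparison of the two topologies.
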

\begin{proof}
For $h\in H$, let
\[
 a_h=\sqrt{\frac{dM_h}{dM}}\in L^2(M).
\]
This denotes an equivalence class of functions; no simultaneous
pointwise choice of densities is required. Every $a_h$ is nonnegative
and has norm one. Define
\[
 q(g)=\bigl(2^{-n}\cos(2\pi g_n),
             2^{-n}\sin(2\pi g_n)\bigr)_{n\ge0}\in\ell_2(\N;\R^2).
\]
Uniform convergence of the squared tails shows that $q$ is continuous.
It is injective, and compactness of $K$ makes it a homeomorphism onto
its compact image. Its induced distance is invariant under translations.
Set
\[
 \Psi(h)=(q(h),a_h),\qquad
 D(h,k)^2=\|q(h)-q(k)\|_2^2+\|a_h-a_k\|_{L^2(M)}^2.
\]
The first component makes $D$ a metric. The second is
$\Hel(M_h,M_k)^2$, and a common translation of both measures preserves
Hellinger distance. Thus $D$ is translation invariant. Since $H$ is
abelian, this also gives
\begin{equation}\label{eq:inverse-isometry}
 D(-h,-k)=D(h,k).
\end{equation}

We prove completeness carefully. Let $(h_j)$ be $D$-Cauchy. The compact
image $q(K)$ is closed in its Hilbert space, so there is $g\in K$ with
$q(h_j)\to q(g)$, and hence $h_j\to g$ in $K$. There is also
$a\in L^2(M)$ with $a_{h_j}\to a$. The nonnegative cone is closed
in $L^2(M)$, and the norm is continuous, so $a\ge0$ and $\|a\|_2=1$.
The density-square inequality gives
\[
 \left\|\frac{dM_{h_j}}{dM}-a^2\right\|_{L^1(M)}\longrightarrow0.
\]
Thus $M_{h_j}\to a^2M$ in total variation, and in particular weakly.
The continuous orbit map also gives $M_{h_j}\to M_g$ weakly. Uniqueness
of the weak limit implies
\begin{equation}\label{eq:forward-density}
 M_g=a^2M\ll M.
\end{equation}

One-sided absolute continuity is not yet enough to put $g$ in $H$.
By \eqref{eq:inverse-isometry}, $(-h_j)$ is also $D$-Cauchy, and it
converges to $-g$ in $K$. Repeating the argument gives $M_{-g}\ll M$.
Push this last inclusion forward by $T_g$:
\[
 M=(T_g)_*M_{-g}\ll (T_g)_*M=M_g.
\]
Together with \eqref{eq:forward-density}, this gives $M_g\ME M$,
so $g\in H$. Uniqueness of the nonnegative square root of the density
then gives $a=a_g$. Consequently $D(h_j,g)\to0$, proving completeness.

The Hilbert space used here is separable: $L^2(M)$ is separable for a
finite Borel measure on a compact metrizable space, as follows by
approximating measurable functions using a countable generating algebra.
Every subspace of a separable metric space is separable. Hence $(H,D)$
is a complete separable metric space. Translation invariance and the
triangle inequality give
\[
 D(h+k,h'+k')\le D(h,h')+D(k,k'),
\]
and \eqref{eq:inverse-isometry} gives continuity of inversion. Thus
$(H,D)$ is a Polish group. Its inclusion into $K$ is continuous, because
convergence in $D$ implies convergence of $q(h)$.

Both this topology and the given topology on $H$ are Polish group
topologies with continuous inclusions into $K$. Lemma~\ref{lem:unique-topology}
therefore identifies them. The map $\Psi$ is an isometry for $D$ and
hence a topological embedding for the original topology, as required.
\end{proof}

Combining Propositions~\ref{prop:averaging} and
\ref{prop:Hilbert-metric}, any Borel reduction of $K/H$ to mutual absolute
continuity forces $H$ to admit a compatible invariant Hilbert-space
distance. The next subsection gives a finite-dimensional test for this
conclusion.
\subsection{An elementary Hilbert-space inequality on finite tori}
For an integer $m\ge2$, let $G=(\Z/m\Z)^n$, and let $e_j$ denote its
$j$th coordinate vector. Expectations over $x\in G$ and
$\varepsilon\in\{-1,0,1\}^n$ below are uniform averages. Each sign
vector is mapped into $G$ by reducing its coordinates modulo $m$;
the averaging still counts all $3^n$ sign vectors, even if some have
the same image.

\begin{lemma}\label{lem:torus-inequality}
Let $m\ge2$ and $n\le m^2$. For any real Hilbert space $V$, any map
$F:G\to V$, and any $a_1,\ldots,a_n\in\Z/m\Z$,
\begin{equation}\label{eq:torus}
 \sum_{j=1}^n\E_x\|F(x+a_je_j)-F(x)\|^2
 \le4m^2\E_{x,\varepsilon}\|F(x+\varepsilon)-F(x)\|^2.
\end{equation}
In particular, if $m$ is even, every $a_j$ may be chosen to be $m/2$.
\end{lemma}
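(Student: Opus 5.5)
The plan is to diagonalize both sides of \eqref{eq:torus} with the Fourier transform on the finite abelian group $G$, so that the inequality reduces to a scalar inequality for each character separately. We may complexify $V$ without changing any of the norms involved. Write $e(t)=\exp(2\pi i t)$ and expand $F(x)=\sum_{k\in G}\widehat F(k)\,e(k\cdot x/m)$ with coefficients $\widehat F(k)\in V_{\mathbb C}$. Parseval's identity on $G$ gives, for every fixed $b\in G$,
\[
 \E_x\|F(x+b)-F(x)\|^2=\sum_{k\in G}\|\widehat F(k)\|^2\,\bigl|e(k\cdot b/m)-1\bigr|^2 .
\]
Averaging over $\varepsilon$ and summing over $j$ then reduces \eqref{eq:torus} to the following claim: for every $k\in G$,
\[
 \sum_{j=1}^n\bigl|e(k_ja_j/m)-1\bigr|^2\le 4m^2\,\E_\varepsilon\bigl|e(k\cdot\varepsilon/m)-1\bigr|^2 .
\]

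Fix $k$, let $s$ be the number of coordinates $j$ with $k_j\neq0$, and put $\theta_j=2\pi k_j/m$.

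\emph{The left side.} Each term is at most $4$, and it vanishes when $k_j=0$. Hence the left side is at most $4s$.

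\emph{The right side.} The coordinates of $\varepsilon$ are independent and uniform on $\{-1,0,1\}$. Therefore
\[
 \E_\varepsilon\bigl|e(k\cdot\varepsilon/m)-1\bigr|^2
 =2-2\prod_j c_j,\qquad c_j=\tfrac13(1+2\cos\theta_j)\in[-\tfrac13,1].
\]
For $k_j=0$ we have $c_j=1$. For $k_j\ne0$ we have $\cos\theta_j\le\cos(2\pi/m)$. Combining this with $1-\cos t\ge 2t^2/\pi^2$ for $|t|\le\pi$ gives
\[
 |c_j|\le\max\bigl(1-\tfrac{16}{3m^2},\tfrac13\bigr)\le 1-\delta,\qquad \delta=\min\bigl(\tfrac{16}{3m^2},\tfrac23\bigr).
\]
Consequently the right side of the claim is at least $8m^2\bigl(1-(1-\delta)^s\bigr)\ge 8m^2\bigl(1-e^{-\delta s}\bigr)$.

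The main step, where the hypothesis $n\le m^2$ enters, is the elementary inequality
\[
 8m^2\bigl(1-e^{-\delta s}\bigr)\ge 4s\qquad\text{for all }0\le s\le m^2 .
\]
I plan to prove it by concavity. The function $s\mapsto1-e^{-\delta s}$ is concave and vanishes at $s=0$, so on $[0,m^2]$ it lies above its chord. The chord has slope
\[
 \frac{1-e^{-\delta m^2}}{m^2}\ge\frac{1-e^{-8/3}}{m^2}>\frac1{2m^2},
\]
because $\delta m^2\ge\min(16/3,\,2m^2/3)\ge 8/3$ for $m\ge2$. This proves the claim for each $k$, and summing over $k$ against $\|\widehat F(k)\|^2$ gives \eqref{eq:torus}.

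The remark about even $m$ needs nothing extra, since the $a_j$ were arbitrary. The only points I would double-check are the constants: the lower bound $1-\cos t\ge 2t^2/\pi^2$ on $[-\pi,\pi]$, and the small case $m=2$, where $c_j=-1/3$ for $k_j\ne0$ and the cap $\delta=2/3$ is what applies.
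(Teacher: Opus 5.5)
Your proposal is correct and follows the paper's proof: Fourier diagonalization on $G$, the bound $4s$ for the axis multiplier, the product formula $2-2\prod_j c_j$ for the short-step multiplier, and a one-variable estimate that uses $s\le n\le m^2$. The only difference is in the constants: you keep the sharper gap $\delta=\min(16/(3m^2),2/3)$ and use concavity of $1-e^{-\delta s}$, whereas the paper weakens to $|q_j|\le 1-1/m^2$ and uses $(1-u)^{-r}\ge 1+ru$ to get $1-(1-m^{-2})^r\ge r/(2m^2)$.
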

\begin{proof}
The range of $F$ lies in a finite-dimensional real subspace. Choosing
an orthonormal basis in that subspace reduces the claim, by summing
coordinatewise, to a scalar-valued function. We may use complex-valued
functions, since the inequality for real functions is then a special case.

The characters of $G$ are
\[
 \chi_k(x)=\exp\left(\frac{2\pi i}{m}\sum_{j=1}^nk_jx_j\right),
 \qquad k\in G.
\]
They form an orthonormal basis for scalar functions under uniform
averaging. This follows by multiplying the one-coordinate identity
\[
 \frac1m\sum_{x=0}^{m-1}\exp(2\pi i kx/m)
 =\begin{cases}1,&k=0\pmod m,\\0,&k\ne0\pmod m,\end{cases}
\]
which is a finite geometric sum. For
$\widehat F(k)=\E_xF(x)\overline{\chi_k(x)}$, Parseval's identity gives
\[
 \E_x|F(x+v)-F(x)|^2
 =\sum_{k\in G}|\widehat F(k)|^2|\chi_k(v)-1|^2.
\]
It therefore suffices to compare the two multipliers for each $k$.

Let $r=|\{j:k_j\ne0\}|$. For any choice of the axis shifts,
\begin{equation}\label{eq:long-multiplier}
 \sum_{j=1}^n|\chi_k(a_je_j)-1|^2\le4r.
\end{equation}
The short-step multiplier is
\begin{equation}\label{eq:short-multiplier}
 \E_\varepsilon|\chi_k(\varepsilon)-1|^2
 =2\left(1-\prod_{j=1}^n q_j\right),\qquad
 q_j=\frac{1+2\cos(2\pi k_j/m)}3.
\end{equation}
Indeed, the signs are independent, each one-coordinate expectation of
the character is the real number $q_j$, and $|z-1|^2=2-2\Re z$ for
$|z|=1$.

If $k_j=0$, then $q_j=1$. If $k_j\ne0$, choose its representative
in $\{1,\ldots,m-1\}$ and put
$s=\min(k_j,m-k_j)$. Then $1\le s\le m/2$. Concavity of sine on
$[0,\pi/2]$ gives $\sin(\pi s/m)\ge2s/m\ge2/m$.
If $q_j\ge0$, we obtain
\[
 1-|q_j|=1-q_j=\frac43\sin^2(\pi k_j/m)
 \ge\frac{16}{3m^2}\ge\frac1{m^2}.
\]
If $q_j<0$, then $|q_j|\le1/3\le1-1/m^2$. In either case,
\[
 |q_j|\le1-\frac1{m^2}\qquad(k_j\ne0).
\]
It follows that
\[
 1-\prod_jq_j\ge1-\prod_j|q_j|
 \ge1-(1-m^{-2})^r.
\]
For completeness, if $u=m^{-2}$ and $0\le r\le m^2$, then
\[
 (1-u)^{-r}\ge(1+u)^r\ge1+ru,
\]
so
\[
 1-(1-u)^r\ge\frac{ru}{1+ru}\ge\frac{ru}{2}
 =\frac{r}{2m^2}.
\]
The case $r=0$ is included. Thus the multiplier in
\eqref{eq:short-multiplier} is at least $r/m^2$. Multiplication by
$4m^2$ dominates \eqref{eq:long-multiplier}, proving the result.
\end{proof}

This estimate is a Hilbert-space instance of metric cotype; compare
\cite[Proposition 3.1]{MN}. The proof above supplies the precise scaling
and constant used here. In particular the dimension may be $n=m^2$.

\begin{proposition}\label{prop:lp-obstruction}
If $2<p<\infty$, the Polish group $(H_p,d_p)$ admits no topological embedding
$\Psi$ into a real Hilbert space whose induced distance
$D(h,k)=\|\Psi(h)-\Psi(k)\|$ is translation invariant.
\end{proposition}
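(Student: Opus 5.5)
Suppose, for contradiction, that $\Psi:H_p\to V$ is a topological embedding into a real Hilbert space $V$ with translation-invariant induced distance $D$. The plan is to test $D$ on finite subgroups of $H_p$ using Lemma~\ref{lem:torus-inequality} with $n=m^2$ and $m$ even.

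\textbf{Embedding the finite torus.} Fix $m$ even and $n=m^2$, and fix coordinates $N,N+1,\dots,N+n-1$. Define $\iota:G=(\Z/m\Z)^n\to H_p$ by placing $x_j/m\in\T$ in coordinate $N+j-1$ and zero elsewhere. This is a group homomorphism into $H_p$, since the support is finite. Put $F=\Psi\circ\iota$. By translation invariance, $\|F(x+v)-F(x)\|=D(\iota v,0)$ for all $x$, so the expectations over $x$ in \eqref{eq:torus} disappear. We write $\phi(h)=D(h,0)$.

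\textbf{Upper bound from continuity at $0$.} Since $\Psi$ is continuous at $0$, for each $\delta>0$ there is $\eta>0$ with $\phi(h)<\delta$ whenever $d_p(h,0)<\eta$. For a sign vector $\varepsilon$ with $r$ nonzero entries, $d_p(\iota\varepsilon,0)=r^{1/p}/m\le n^{1/p}/m=m^{2/p-1}$. Because $p>2$, this tends to $0$ as $m\to\infty$. Hence for $m$ large enough, every short step satisfies $\phi(\iota\varepsilon)<\delta$, so the right side of \eqref{eq:torus} is at most $4m^2\delta^2$.

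\textbf{Lower bound from the embedding.} The long steps are $\iota(\tfrac m2 e_j)$, which equal $1/2$ in a single coordinate $N+j-1$, so they have $d_p$-norm $1/2$. We need $\phi(\iota(\tfrac m2 e_j))\ge c>0$ uniformly in $j$ and $N$.

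This is the main obstacle. A topological embedding only guarantees that $\phi$ is bounded away from $0$ on sequences not converging to $0$, not on a fixed ball. The approach is to argue by contradiction. The elements $u_j=\tfrac12 e_j$ (coordinate $j$) do not converge to $0$ in $d_p$. If $\inf_j\phi(u_j)=0$, we can pass to a subsequence with $\phi(u_{j_k})\to0$. Since $\Psi$ is an embedding, $\Psi(u_{j_k})\to\Psi(0)$ forces $u_{j_k}\to0$ in $H_p$, which is false. So $c=\inf_j\phi(u_j)>0$; this infimum is over all coordinate indices $j\in\N$, so it is uniform in $N$ and $j$ at once. The left side of \eqref{eq:torus} is then at least $n c^2=m^2c^2$.

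\textbf{Conclusion.} Combining the bounds gives $m^2c^2\le4m^2\delta^2$, that is, $c\le2\delta$. Choosing $\delta<c/2$ at the start and then $m$ large enough yields a contradiction. The only step needing care is fixing $c$ before choosing $\delta$ and then $m$, which the order above respects.
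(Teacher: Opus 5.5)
Your proof is correct and follows the paper's argument. Both use the same grid $(\Z/m\Z)^{m^2}$ embedded with step $1/m$, compare the half-turn lower bound against the short-step bound $m^{2/p-1}\to0$, and apply Lemma~\ref{lem:torus-inequality} with the constants fixed in the order $c$, then $\delta$, then $m$. The ``main obstacle'' is not really one: continuity of $\Psi^{-1}$ at $\Psi(0)$ already gives $c>0$ with $D(h,0)\ge c$ whenever $d_p(h,0)\ge\frac14$, which is exactly how the paper gets the lower bound, and your sequential argument is equivalent to this.
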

\begin{proof}
Suppose such a map exists. Continuity of its inverse at $\Psi(0)$
supplies $c>0$ such that
\begin{equation}\label{eq:long-threshold}
 d_p(h,0)\ge\frac14\quad\Longrightarrow\quad D(h,0)\ge c.
\end{equation}
Continuity of $\Psi$ at zero then supplies $\delta>0$ such that
\begin{equation}\label{eq:short-threshold}
 d_p(h,0)<\delta\quad\Longrightarrow\quad D(h,0)<\frac c3.
\end{equation}
Both constants are fixed before choosing a dimension.

Because $2/p-1<0$, choose an even integer $m\ge2$ so large that
$m^{2/p-1}<\delta$, and put $n=m^2$. Embed $G=(\Z/m\Z)^n$ in $H_p$
by
\[
 \iota(x)_j=\begin{cases}x_j/m\pmod1,&1\le j\le n,\\0,&j=0\text{ or }j>n.
 \end{cases}
\]
The formula is independent of the integer representatives of $x_j$,
and defines a group homomorphism. Let $F=\Psi\circ\iota$.
An axis half-turn satisfies
\[
 d_p\bigl(\iota((m/2)e_j),0\bigr)=\frac12.
\]
Translation invariance and \eqref{eq:long-threshold} therefore give
$\|F(x+(m/2)e_j)-F(x)\|\ge c$ for every $x,j$. Every short sign
increment satisfies
\[
 d_p\bigl(\iota(\varepsilon),0\bigr)
 \le\frac{n^{1/p}}m=m^{2/p-1}<\delta.
\]
Thus \eqref{eq:short-threshold} and translation invariance give
$\|F(x+\varepsilon)-F(x)\|<c/3$ for every $x,\varepsilon$.
Applying Lemma~\ref{lem:torus-inequality} yields
\[
 m^2c^2=nc^2
 \le\sum_{j=1}^n\E_x\|F(x+(m/2)e_j)-F(x)\|^2
 \le4m^2\E_{x,\varepsilon}\|F(x+\varepsilon)-F(x)\|^2
 \le\frac49m^2c^2,
\]
a contradiction.
\end{proof}

The endpoint $p=2$ is excluded precisely because the short-step bound
in this scaling is then $m^{2/p-1}=1$, which does not approach zero.
No rate of continuity of the hypothetical embedding is assumed.

\begin{theorem}\label{thm:lp}
For $1\le p\leq\infty$,
\[
 \ell_p\leB\ME\quad\Longleftrightarrow\quad p\le2.
\]
\end{theorem}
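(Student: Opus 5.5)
The plan is to split the statement into three ranges of $p$.

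For $1\le p\le2$ the reduction $\ell_p\leB\ME$ is exactly Proposition~\ref{prop:positive}, so nothing new is needed.

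For $p=\infty$ we argue by transitivity. Since $E_1\leq_B\ell_\infty$, a reduction $\ell_\infty\leB\ME$ would give $E_1\leB\ME$. But $\ME$ is the orbit relation of the Polish group of measure-class-preserving changes of density. This is the classical fact recorded in Section~\ref{sec:established}, where it yields $E_1\nleq_B{\equiv_m}$ via Kechris--Louveau. So $\ell_\infty\nleq_B\ME$.

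The substantive case is $2<p<\infty$. Suppose $\ell_p\leB\ME$. By Lemma~\ref{lem:compact-replacement} we have a continuous reduction $K/H_p\leB\ell_p$ with $K=\T^\N$. Composing gives $K/H_p\leB\ME$. Proposition~\ref{prop:averaging} then produces a probability $M$ on $Z=K\times2^\N$ whose quasi-invariance subgroup under the translations $T_g$ is exactly $H_p$, and whose orbit map $g\mapsto M_g$ is weakly continuous.

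Next we equip $H_p$ with the Polish group topology given by $d_p$, which has continuous inclusion into $K$ by Lemma~\ref{lem:Polish-H}. Proposition~\ref{prop:Hilbert-metric} now supplies an injective map $\Psi$ into a separable real Hilbert space. Its induced distance $D$ is translation invariant and complete, and it induces the $d_p$-topology, so $\Psi$ is a topological embedding of $(H_p,d_p)$. This directly contradicts Proposition~\ref{prop:lp-obstruction}, and so $\ell_p\nleq_B\ME$ for $2<p<\infty$.

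All the heavy lifting has already been done: the averaging step, the identification of topologies via Lemma~\ref{lem:unique-topology}, and the torus inequality. The only point needing care is bookkeeping in the application of Proposition~\ref{prop:Hilbert-metric}. Its hypothesis is \eqref{eq:quasi-group} with $H=H_p$ exactly, and the compact group must be the $K=\T^\N$ of that proposition, which the composition with Lemma~\ref{lem:compact-replacement} guarantees. The step I would double-check is the endpoint $p=\infty$: the cited obstruction applies to orbit relations of Polish group actions, and $\ME$ must be presented as one. If that presentation were not explicitly available, the fallback is Gao's recorded $E_0^\omega\nleq_B\ell_\infty$ style arguments. The simpler route is to note $\ell_\infty\geq_B E_1$ and cite \cite[Theorem~4.2]{KL} as the paper already does.
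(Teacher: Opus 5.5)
Your proposal is correct and follows the paper's proof essentially step for step. The case $p\le2$ is Proposition~\ref{prop:positive}. The case $2<p<\infty$ runs through Lemma~\ref{lem:compact-replacement}, Propositions~\ref{prop:averaging} and~\ref{prop:Hilbert-metric}, and then the contradiction with Proposition~\ref{prop:lp-obstruction}. The case $p=\infty$ combines $E_1\leq_B\ell_\infty$ with Kechris--Louveau.

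Two small corrections to the $p=\infty$ case. First, you only need, and the paper only records, that $\equiv_m$ is Borel \emph{reducible} to a Polish-group orbit relation, for instance conjugation of unitaries via the amplified multiplication operator. You do not need $\equiv_m$ to literally be the orbit relation of a density-change group; for the natural candidate, the action of $C(C,\R)$, the orbits are strictly smaller than the $\equiv_m$-classes. Second, your $E_0^\omega\nleq_B\ell_\infty$ fallback only yields $\equiv_m\nleq_B\ell_\infty$, which is the wrong direction, so it should be dropped.
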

\begin{proof}
Proposition~\ref{prop:positive} proves the forward construction for
$1\le p\le2$. If $2<p<\infty$ and $\ell_p\leB\ME$, composition with
Lemma~\ref{lem:compact-replacement} gives $K/H_p\leB\ME$.
Propositions~\ref{prop:averaging} and \ref{prop:Hilbert-metric} then
produce an invariant Hilbert-space metric compatible with $d_p$.
This contradicts Proposition~\ref{prop:lp-obstruction}.

For $p=\infty$, the classical reductions and nonreductions
$E_1\leq_B\ell_\infty$ and $E_1\nleq_B{\equiv_m}$
from Section~\ref{sec:established} imply
$\ell_\infty\nleq_B{\equiv_m}$.
This proves Theorem~\ref{main:lp} in its full stated range.
\end{proof}

\subsection{The null-sequence relation}
\begin{proposition}\label{prop:c0-obstruction}
The Polish group $(H_0,d_0)$ admits no topological embedding into a real
Hilbert space for which the induced distance is translation invariant.
\end{proposition}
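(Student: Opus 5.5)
We follow the proof of Proposition~\ref{prop:lp-obstruction} and simply replace the $\ell_p$ scaling by the sup-norm scaling, which is even more favorable. Suppose $\Psi$ is a topological embedding of $(H_0,d_0)$ into a real Hilbert space whose distance $D(h,k)=\|\Psi(h)-\Psi(k)\|$ is translation invariant.

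Continuity of $\Psi^{-1}$ at $\Psi(0)$ gives $c>0$ with
\[
 d_0(h,0)\ge\tfrac14\ \Longrightarrow\ D(h,0)\ge c .
\]
Continuity of $\Psi$ at $0$ then gives $\delta>0$ with
\[
 d_0(h,0)<\delta\ \Longrightarrow\ D(h,0)<c/3 .
\]
Both constants are fixed before any dimension is chosen.

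Choose an even integer $m\ge2$ with $1/m<\delta$ and put $n=m^2$. Embed $G=(\Z/m\Z)^n$ into $H_0$ by the same homomorphism $\iota$ as before, with $\iota(x)_j=x_j/m$ for $1\le j\le n$ and $0$ elsewhere. Finite support puts the image in $H_0$, and $\iota$ is well defined modulo $m$.

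The key difference from the $\ell_p$ case is this. A short increment $\varepsilon\in\{-1,0,1\}^n$ satisfies $d_0(\iota(\varepsilon),0)\le 1/m<\delta$, independently of $n$, because $d_0$ is a supremum rather than a sum. An axis half-turn satisfies $d_0(\iota((m/2)e_j),0)=1/2\ge1/4$.

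Set $F=\Psi\circ\iota$. Translation invariance of $D$ converts the two thresholds into the pointwise bounds
\[
 \|F(x+(m/2)e_j)-F(x)\|\ge c,\qquad \|F(x+\varepsilon)-F(x)\|<c/3
\]
for every $x$, $j$ and $\varepsilon$. Lemma~\ref{lem:torus-inequality} applies with $n=m^2$ and $a_j=m/2$, and it yields
\[
 m^2c^2\le 4m^2\cdot\tfrac{c^2}{9},
\]
which is a contradiction.

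There is no real obstacle here. The only care needed is the order of quantifiers: $c$ and $\delta$ must come from continuity alone, before $m$ is chosen. The check that the $d_0$-size of the short steps does not grow with $n$ is immediate. The case $p=2$ failed only because the short steps had size $m^{2/p-1}=1$; in $H_0$ they have size $1/m\to0$, so the argument goes through directly.
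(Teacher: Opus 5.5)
Your proposal is correct and follows the paper's proof essentially line for line. You fix the same thresholds $c$ and $\delta$ from continuity of $\Psi^{-1}$ and $\Psi$, use the same grid $\iota:(\Z/m\Z)^{m^2}\to H_0$ with $1/m<\delta$, and apply Lemma~\ref{lem:torus-inequality} to reach the same contradiction $m^2c^2\le 4m^2(c/3)^2$.
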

\begin{proof}
Suppose $\Psi$ were such an embedding, and write
$D(h,k)=\|\Psi(h)-\Psi(k)\|$. As in the proof of
Proposition~\ref{prop:lp-obstruction}, choose $c>0$ and then $\delta>0$
such that
\[
 d_0(h,0)\ge\tfrac14\ \Longrightarrow\ D(h,0)\ge c,
 \qquad
 d_0(h,0)<\delta\ \Longrightarrow\ D(h,0)<c/3.
\]
Choose an even integer $m\ge2$ with $1/m<\delta$, set $n=m^2$, and
use the same finite-support grid $\iota:(\Z/m\Z)^n\to H_0$.
Every axis half-turn has $d_0$-distance $1/2$ from zero, whereas every
short sign increment has distance at most $1/m$. Translation invariance
gives the long- and short-step estimates at every point of the grid.
Lemma~\ref{lem:torus-inequality}, applied to $F=\Psi\circ\iota$, gives
\[
 m^2c^2\le 4m^2(c/3)^2,
\]
which is impossible.
\end{proof}

\begin{theorem}\label{thm:c0}
\[
 c_0\nleB\ME.
\]
\end{theorem}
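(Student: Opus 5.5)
The argument follows the proof of Theorem~\ref{thm:lp} for finite $p>2$, with $(H_0,d_0)$ in place of $(H_p,d_p)$ and Proposition~\ref{prop:c0-obstruction} in place of Proposition~\ref{prop:lp-obstruction}. Suppose, towards a contradiction, that $c_0\leB\ME$. Lemma~\ref{lem:compact-replacement} gives a continuous reduction $K/H_0\leB c_0$, with $K=\T^\N$. Composing the two reductions gives a Borel reduction $K/H_0\leB\ME$.

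Next we apply Proposition~\ref{prop:averaging} with $K=\T^\N$, which is compact metrizable and abelian, and $H=H_0$. This yields a probability $M$ on $Z=K\times2^\N$ satisfying \eqref{eq:quasi-group}: for the translations $T_g$, $M_g\ME M$ exactly when $g\in H_0$. We also get that $g\mapsto M_g$ is weakly continuous.

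By Lemma~\ref{lem:Polish-H}, $(H_0,d_0)$ is a Polish group whose inclusion into $K$ is continuous. So all hypotheses of Proposition~\ref{prop:Hilbert-metric} hold. It produces an injective map $\Psi$ from $H_0$ into a separable real Hilbert space. The distance $D(h,k)=\|\Psi(h)-\Psi(k)\|$ is complete and translation invariant, and it induces the given $d_0$-topology. Hence $\Psi$ is a topological embedding of $(H_0,d_0)$ into a Hilbert space with translation-invariant induced distance. Proposition~\ref{prop:c0-obstruction} says no such embedding exists, which is the contradiction.

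All of the substance sits in the earlier propositions, so the only point needing care is matching hypotheses. Proposition~\ref{prop:Hilbert-metric} must be applied to the Polish topology $d_0$, not to the subspace topology from $K$, which need not be Polish. Lemma~\ref{lem:unique-topology} is what guarantees that the topology induced by $D$ coincides with the $d_0$-topology. This is exactly what Proposition~\ref{prop:c0-obstruction} requires, since its continuity thresholds are stated in terms of $d_0$.
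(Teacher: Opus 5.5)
Your proposal is correct and is the paper's proof: compose with the chord map of Lemma~\ref{lem:compact-replacement}, apply Proposition~\ref{prop:averaging} to obtain $M$ with exact quasi-invariance group $H_0$, use Proposition~\ref{prop:Hilbert-metric} (with Lemma~\ref{lem:Polish-H} supplying its hypotheses) to get a translation-invariant Hilbert embedding of $(H_0,d_0)$, and contradict Proposition~\ref{prop:c0-obstruction}. Your remark that Lemma~\ref{lem:unique-topology} identifies the $D$-topology with the $d_0$-topology is the hypothesis check the paper's short proof leaves implicit.
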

\begin{proof}
A proposed reduction, composed with the continuous map in
Lemma~\ref{lem:compact-replacement}, would reduce $K/H_0$ to $\ME$.
Compact averaging and Proposition~\ref{prop:Hilbert-metric} would then
give the embedding excluded by Proposition~\ref{prop:c0-obstruction}.
This also proves Theorem~\ref{main:c0}.
\end{proof}

The short-step estimate for $H_0$ is independent of the dimension:
$d_0(\iota(\varepsilon),0)\le1/m$. This is the precise geometric reason
that the same finite-torus argument applies to null sequences. No
comparison of $c_0$ with an $\ell_p$ relation is used.

\section{Countable relations and sequence spaces}\label{sec:countable-sequences}
\subsection{Countable equivalence relations and \texorpdfstring{$c_0$}{c0}}
We give the positive answer to Kanovei's $c_0$ question.
For a bounded-degree Borel graph $G$, let $G^{[r]}$ connect distinct
points at graph distance at most $r$. This is again a bounded-degree
Borel graph. By \cite[Proposition~4.6]{KST}, each $G^{[r]}$ has a
finite Borel proper coloring. We use this theorem without repeating
its proof.
\begin{theorem}\label{thm:countable-graph-czero}
Let $G$ be a Borel graph of bounded finite degree on a nonempty standard
Borel space $X$, and let $E_G$ be its connectedness relation. There is a
Borel reduction of $E_G$ to $c_0$ with range contained in $[0,1]^{\N}$.
\end{theorem}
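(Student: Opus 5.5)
The plan is to build the reduction scale by scale from tent functions centred on well-separated points, with each scale contributing finitely many coordinates. Let $d$ bound the degree of $G$, and write $d_G$ for the graph distance. For each $k\ge1$ the graph $G^{[2k]}$ has bounded finite degree, so by \cite[Proposition~4.6]{KST} it has a Borel proper coloring $\gamma_k:X\to\{1,\ldots,N_k\}$ with finitely many colors. For each color $c$, the class $A_{k,c}=\gamma_k^{-1}(c)$ is $2k$-separated: two distinct points of $A_{k,c}$ lie at $G$-distance greater than $2k$, possibly infinite.

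Define the tent function
\[
 f_{k,c}(x)=\max\Bigl(0,\,1-\tfrac1k\,d_G(x,A_{k,c})\Bigr).
\]
By separation, at most one point $a\in A_{k,c}$ satisfies $d_G(x,a)<k$. The function $f_{k,c}$ is therefore a disjoint union of tents of radius $k$ with centres more than $2k$ apart. In any case it takes values in $[0,1]$ and is $\tfrac1k$-Lipschitz for $d_G$, being a truncated distance function. Enumerate the countably many pairs $(k,c)$, $1\le c\le N_k$, into $\N$ by listing all colors of level $k$ before those of level $k+1$. Let $f(x)$ be the resulting sequence.

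Borelness is routine. Because the degree is bounded, $\{x:d_G(x,A_{k,c})\le j\}$ is a Borel set for each $j$, being a bounded graph neighbourhood of a Borel set by Lusin--Novikov as recalled in the preliminaries. Each $f_{k,c}$ takes finitely many values on these sets.

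For the reduction property, first suppose $x\,E_G\,y$ with $d_G(x,y)=t<\infty$. Then $|f_{k,c}(x)-f_{k,c}(y)|\le t/k$ for every $c$. Since each level $k$ occupies only finitely many coordinates, every coordinate beyond the first $N_1+\cdots+N_k$ has index level at least $k$. Hence $f(x)-f(y)\to0$.

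Conversely, suppose $x$ and $y$ lie in different components. For each $k$ put $c=\gamma_k(x)$. Then $f_{k,c}(x)=1$ because $x\in A_{k,c}$. On the other hand $f_{k,c}(y)=0$, because any point within distance $k$ of $y$ lies in the component of $y$, and by separation the only point of $A_{k,c}$ that could be near $x$ is $x$ itself. So infinitely many coordinates differ by $1$, and $f(x)-f(y)\not\to0$.

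The main point needing care is the Lipschitz estimate. It relies on the separation exceeding $2k$, so that the tents do not overlap or create jumps; this is exactly why the coloring is taken for $G^{[2k]}$ rather than for $G^{[k]}$. It also relies on the fact that each scale contributes only finitely many coordinates, which is what turns the uniform bound $t/k$ at level $k$ into convergence to zero along $\N$. Degenerate cases are handled by the same formulas: finite components, isolated points, and the edgeless graph, where every $A_{k,c}$-distance from a non-member is infinite.
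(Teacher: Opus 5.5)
\textbf{There is a genuine gap in the converse direction, and the map you define need not be a reduction.}

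\emph{The failing step.} You claim that $f_{k,c}(y)=0$ for $c=\gamma_k(x)$, but this does not follow. The $2k$-separation of $A_{k,c}$ only constrains pairs of points in the \emph{same} component. Points in different components are at infinite distance, so they are automatically separated. Nothing prevents $A_{k,\gamma_k(x)}$ from containing $y$ itself, or a point near $y$.

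\emph{Counterexamples.} For the edgeless graph, the constant coloring is a proper coloring of every $G^{[2k]}$. Then every $f_{k,c}$ is identically $1$, so $f$ is constant, although $E_G$ is equality on $X$. Similarly, take $X=2^\N\times\Z$ with edges $(s,n)\sim(s,n+1)$ and $\gamma_k(s,n)=n\bmod(2k+1)$. This is a Borel proper coloring of $G^{[2k]}$. Your $f(s,n)$ then depends only on $n$, so it identifies points on different lines. The coloring theorem gives no control over how colors sit across components. So your construction has no mechanism for remembering which point of $X$ it started from.

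\emph{Where separation is actually needed.} Your final paragraph places the difficulty in the wrong direction. A truncated distance function $\max(0,1-d_G(\cdot,B)/k)$ is $1/k$-Lipschitz on each component for \emph{every} set $B$. So separation plays no role in the forward direction, and your forward direction and Borelness argument are fine. Separation is needed only for the converse.

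\textbf{The missing idea.} The paper refines the colorings into finite partitions $\mathcal P_n$ that are \emph{nested} and \emph{separate points}:
\begin{itemize}
\item Fix a Borel injection $\iota:X\to2^\N$.
\item Let $\mathcal P_n$ be determined by $(\iota(x)\mathbin{\upharpoonright}n,q_1(x),\ldots,q_n(x))$, where $q_i$ properly colors $G^{[2i]}$.
\item Distinct points of one cell are still more than $2n$ apart.
\item The tent coordinates are indexed by the cells $C\in\mathcal P_n$ rather than by color classes.
\end{itemize}

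The converse is then a stabilization argument. Suppose $f(x)-f(y)\to0$. For $n\ge N$, let $C_n$ be the cell containing $x$; there is $z_n\in C_n$ with $d_G(y,z_n)<n/4$. Nestedness puts both $z_n$ and $z_{n+1}$ in $C_n$, at distance less than $n/4+(n+1)/4<2n$, so $z_n=z_{n+1}$. The stable value $z$ lies in every cell of $x$ from stage $N$ on, so $z=x$ because the partitions separate points. Since $z$ lies in the component of $y$, we get $x\mathrel{E_G}y$.

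Both features are essential. Without nesting, the witnesses $z_n$ can wander through larger and larger balls around $y$. Without point separation, the stable witness need not be $x$, as the examples above show.
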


\begin{proof}
For every $n\geq1$, choose a finite Borel proper coloring $q_n$ of
$G^{[2n]}$. Fix a Borel injection $\iota:X\to 2^{\N}$ and let
$\mathcal P_n$ be the finite partition determined by
\[
 \bigl(\iota(x)\mathbin{\upharpoonright}n,q_1(x),\ldots,q_n(x)\bigr).
\]
These partitions are nested and separate points. Moreover, distinct
points in one $\mathcal P_n$-cell have graph-distance greater than $2n$.

For every cell $C\in\mathcal P_n$, define
\begin{equation}\label{eq:countable-distance-block}
 f_{n,C}(x)=\max\left\{0,1-\frac{d_G(x,C)}{n}\right\},
 \qquad d_G(x,C)=\inf_{z\in C}d_G(x,z),
\end{equation}
with the value $0$ when the distance is infinite. The coordinate
$f_{n,C}$ is Borel: bounded graph neighborhoods of a Borel set are Borel,
and this function takes values in the finite set
$\{0,1/n,\ldots,1\}$. Concatenate the finite, nonempty blocks
$(f_{n,C}(x))_{C\in\mathcal P_n}$, in increasing order of $n$, to obtain
a Borel map $f:X\to[0,1]^{\N}$.

Suppose first that $x\mathrel{E_G}y$, and put $D=d_G(x,y)$. Within a
connected component, distance to a set is $1$-Lipschitz whenever finite.
If the component misses $C$, both values in
\eqref{eq:countable-distance-block} are zero. Thus, in every case,
\[
 |f_{n,C}(x)-f_{n,C}(y)|\leq\min\{1,D/n\}.
\]
The bound is uniform over the $n$th block. Since each block is finite,
$f(x)-f(y)$ tends to zero.

Conversely, suppose $f(x)-f(y)\in c_0$. There is $N$ such that
\[
 |f_{n,C}(x)-f_{n,C}(y)|<\tfrac14
 \quad(n\geq N,\ C\in\mathcal P_n).
\]
Let $C_n$ be the cell containing $x$. Since $f_{n,C_n}(x)=1$, there is
$z_n\in C_n$ with $d_G(y,z_n)<n/4$. A finite graph-distance infimum is
attained because the possible distances are nonnegative integers.

Nestedness puts both $z_n$ and $z_{n+1}$ in $C_n$, while
\[
 d_G(z_n,z_{n+1})
 \leq d_G(z_n,y)+d_G(y,z_{n+1})
 <\frac n4+\frac{n+1}{4}<2n.
\]
The coloring property forces $z_n=z_{n+1}$. Hence the witnesses stabilize
to one point $z$ belonging to $C_n$ for all $n\geq N$. Since the
partitions separate points, $z=x$. But $z$ lies in the component of $y$,
so $x\mathrel{E_G}y$.
\end{proof}

\begin{corollary}\label{thm:countable-czero-comparisons}
We have $E_\infty\le_B c_0$ and $E_\infty^\omega\le_B c_0$.
\end{corollary}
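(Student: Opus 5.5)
The first assertion follows from Theorem~\ref{thm:countable-graph-czero} applied to a concrete model of $E_\infty$ that is the connectedness relation of a bounded-degree Borel graph. The natural model is the full shift $\mathbb F_2\curvearrowright2^{\mathbb F_2}$ from Section~\ref{sec:prelim}. Let $a,b$ be free generators, and let $G$ join $x$ to $a^{\pm1}\cdot x$ and to $b^{\pm1}\cdot x$ whenever these points differ from $x$. This graph is Borel, irreflexive and symmetric, and each vertex has degree at most four. Its connected components are exactly the orbits, because the generators generate the group and points fixed by a generator add no new connections. The theorem then gives a Borel reduction $E_\infty\le_B c_0$ whose range lies in $[0,1]^\N$.

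For the countable power, apply this reduction $f$ coordinatewise to get $f^\omega$, which shows $E_\infty^\omega\le_B c_0^\omega$. It then remains to reduce $c_0^\omega$, at least on sequences with values in $[0,1]^\N$, to $c_0$. I would either cite the identity $(c_0)^\omega\sim_B c_0$ \cite[Lemma~7.3]{GaoBanach}, or give the direct construction, which is easy because the range is bounded.

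The direct construction goes as follows. Given $(u^k)_{k\in\N}$ with $u^k\in[0,1]^\N$, choose a bijection $\langle k,n\rangle:\N\times\N\to\N$. Define $g((u^k))_{\langle k,n\rangle}=2^{-k}\cdot$ an unweighted copy is not enough, so weight the copies as follows. Set $g((u^k))_{\langle k,n\rangle}=w_{k,n}u^k_n$, where the weights satisfy $w_{k,n}=1$ for $n\ge k$ and $w_{k,n}=0$ for $n<k$. Equivalently, discard the first $k$ coordinates of the $k$th sequence and interleave the rest.

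Now check both directions. Suppose each $u^k-v^k$ tends to $0$, and fix $\varepsilon>0$. Only the finitely many rows $k<K$ can have some coordinate exceeding $\varepsilon$ beyond an index $N$... but this fails, since rows with $k\ge K$ may have large early entries.

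So I would instead use the standard trick of truncating the $k$th row by its own tail modulus, which is not Borel-uniform. This is exactly the difficulty that Gao's lemma resolves. The main obstacle is therefore this product step, and I would rely on \cite[Lemma~7.3]{GaoBanach} rather than reprove it. The Borel map $f^\omega$ composed with Gao's reduction $c_0^\omega\le_B c_0$ finishes the proof.
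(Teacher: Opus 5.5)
Your route is the paper's: the generator graph of the $\mathbb F_2$ shift has degree at most four and its components are exactly the orbits, so Theorem~\ref{thm:countable-graph-czero} gives $E_\infty\le_B c_0$, and the product case follows from the coordinatewise map together with the cited identity $(c_0)^\omega\sim_B c_0$. Your abandoned digression misjudges the product step, however. Because the reduction takes values in $[0,1]^\N$, the weighting $g((u^k))_{\langle k,n\rangle}=2^{-k}u^k_n$ that you began to write already works: rows with $2^{-k}<\varepsilon$ never produce a difference $\ge\varepsilon$, the finitely many remaining rows produce only finitely many such differences, and conversely each row is recovered along the indices $\langle k,n\rangle\to\infty$. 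So no tail-modulus truncation is needed, and the citation could even be avoided.
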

\begin{proof}
The full $\mathbb F_2$ shift admits a Borel graphing of degree at most
four: join each point to its translates by the two generators and
their inverses, omitting loops and repeated edges.
Theorem~\ref{thm:countable-graph-czero} applies.
The product conclusion then follows from
$(c_0)^\omega\sim_B c_0$, the established product identity
recorded in \cite[Lemma~7.3]{GaoBanach}.
\end{proof}

\subsection{The universal countable relation versus every finite \texorpdfstring{$\ell_p$}{ell p}}
\label{sec:finite-p}

We now settle the comparison left open by the restriction to
$1\le p\le2$ in Thomas's theorem. The obstruction below concerns
the natural coset relation $\ell_p$, not arbitrary actions of
the additive Banach group.

\begin{theorem}[Spectral-gap homomorphisms]\label{thm:spectral-homomorphism}
Let a countable group $\Gamma$ act on a standard probability space
$(X,\mu)$ by probability-preserving Borel transformations, and suppose
the action has spectral gap. Fix $1\le p<\infty$.
If a measurable map $f:X\to\R^\N$ satisfies
\[
 f(gx)-f(x)\in\ell_p
 \qquad\text{for every }g\in\Gamma
 \text{ and almost every }x,
\]
then there is $m\in\R^\N$ such that
$f(x)-m\in\ell_p$ for almost every $x$.
\end{theorem}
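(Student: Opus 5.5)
The plan is to take $m$ to be the vector of coordinatewise medians and to prove $f(x)-m\in\ell_p$ almost everywhere by applying the scalar Poincaré inequality \eqref{eq:scalar-poincare} coordinate by coordinate. The difficulty is that $f(gx)-f(x)$ is only almost surely in $\ell_p$, with no integrability, so we cannot sum the inequality directly. We therefore first localize to a set of large measure on which the increments are uniformly controlled, and then truncate.

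First, since $\Gamma$ is countable and $S=S_p$ is finite, the function $\Phi(x)=\sum_{s\in S}\|f(sx)-f(x)\|_p^p$ is finite almost everywhere. Choose $R$ with $\mu(\Phi>R)$ small. This bound is not yet integrable, so we truncate. For each coordinate $n$, let $m_n$ be a median of $f_n$. For a level $t>0$ to be fixed, set $u_n=\operatorname{clip}_t(f_n-m_n)$, where $\operatorname{clip}_t(v)=\max(-t,\min(t,v))$. Each $u_n$ is bounded and still has median zero, since clipping is monotone and fixes $0$. Clipping is also $1$-Lipschitz, so $|u_n(sx)-u_n(x)|\le\min(2t,|f_n(sx)-f_n(x)|)$.

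Next, sum \eqref{eq:scalar-poincare} over $n\le N$. The constants are uniform in $N$, as remarked after \eqref{eq:scalar-poincare}. This gives
\[
\int\sum_{n\le N}|u_n|^p\,d\mu\le C_p\sum_{s\in S}\int\sum_n\min\bigl(2t,|f_n(sx)-f_n(x)|\bigr)^p\,d\mu .
\]
The right side is still not obviously finite. To fix this we use a truncation at the level of the whole vector, not the coordinates. Replace $f$ by $\tilde f=f\cdot 1$ on a $\Gamma$-invariant set? That set is not available. Instead, define $w_s(x)=\min\bigl(A,\|f(sx)-f(x)\|_p\bigr)$ and try to bound the right side by $\int\sum_s w_s^p$ plus a small error.

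The cleanest route, which I would try first, is a \emph{vector-valued} truncation. Put $v(x)=f(x)-m$ and consider the map $T_A v_n=\operatorname{clip}_{A}$ applied coordinatewise. Then apply the inequality in the form: for every $A$,
\[
\mu\bigl(\|T_A v\|_p^p>\lambda\bigr)\le\frac{C_p}{\lambda}\sum_s\int\min\bigl(\|f(sx)-f(x)\|_p^p,\ \text{const}\cdot A^p\cdot\#\{n\}\bigr)\,d\mu .
\]
This still fails because of infinitely many coordinates.

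So the key obstacle is the lack of integrability of the increments. The resolution I would pursue is to change the function rather than the measure. Compose with a bounded Lipschitz map $\beta:\ell_p\to$ a Hilbert/$L^p$ space that is uniformly bounded, for instance $\beta(y)=y/\max(1,\|y\|_p)$ applied to tails. Equivalently, run the argument on tail blocks: for $N<N'$, apply the summed Poincaré inequality to the block $(u_n)_{N<n\le N'}$ with the truncation $\min(1,\cdot)$ on the block norm. This uses the scalar inequality for the function $x\mapsto\min\bigl(1,\|(f_n-m_n)_{N<n\le N'}\|_p\bigr)$ minus its median. That function is bounded by $1$, and its increments are bounded by $\min\bigl(1,\|(f(sx)-f(x))_{n>N}\|_p\bigr)$. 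By dominated convergence these increments tend to $0$ in $L^p$ as $N\to\infty$, uniformly in $N'$.

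The Poincaré inequality then shows that the block-norm function concentrates near its median $c_{N,N'}$ in $L^p$. To finish, I would show that the medians $c_{N,N'}$ are small. Here I would use spectral gap again, via ergodicity and a symmetrization argument: compare $f$ with an independent copy, $x,x'$ under $\mu\otimes\mu$, which inherits spectral gap for the diagonal action. If the tails of $f(x)-f(x')$ concentrate, their median must be near zero, because the tails are nearly antisymmetric. It then follows that $\|(f(x)-f(x'))_{n>N}\|_p<1$ for most pairs, and the tail is finite. Fixing a good $x'$ gives $m:=f(x')$ with $f(x)-m\in\ell_p$ on a set of measure close to $1$. This extends to almost every $x$ because the set $\{x:f(x)-m\in\ell_p\}$ is essentially invariant, and the action is ergodic by spectral gap. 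I expect this median-smallness and symmetrization step to be the main obstacle.
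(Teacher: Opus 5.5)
There is a genuine gap, at the step you flagged yourself. Once you replace the vector by the scalar $\phi_{N,N'}(x)=\min\{1,\|(f_n-m_n)_{N<n\le N'}\|_p\}$, the Poincar\'e inequality only says that $\phi_{N,N'}$ concentrates near its own median $c_{N,N'}$. Nothing forces $c_{N,N'}$ to be small. A random vector whose coordinates all have median zero can have essentially constant norm, for example $\pm c\,e_1$ with probability $1/2$ each.

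Your symmetrization does not repair this, for two reasons.
\begin{enumerate}
\item Spectral gap is not inherited by the diagonal action on $(X\times X,\mu\otimes\mu)$. Take an ergodic translation action of a countable dense subgroup of a compact group; such actions can have spectral gap, e.g.\ a property $(\mathrm T)$ group acting on its profinite completion. Then $(x,x')\mapsto x^{-1}x'$ is diagonally invariant, so the diagonal action is not even ergodic.
\item Even granting diagonal spectral gap, antisymmetry of $f(x)-f(x')$ gives no information about $\min\{1,\|(f(x)-f(x'))_{N<n\le N'}\|_p\}$, which is symmetric under the swap. You would need concentration of the \emph{vector} around a point, and that is exactly what the scalar reduction discarded.
\end{enumerate}

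The missing idea is already in your map $\beta(y)=y/\max(1,\|y\|_p)$, but it must be applied to the vector, not replaced by its norm. Put $v_N=(f_n-m_n)_{n<N}$ and $u=v_N\min\{1,R/\|v_N\|_p\}$. Each coordinate of $u$ is a strictly positive multiple of the corresponding coordinate of $v_N$, so it is bounded and still has median zero. Hence \eqref{eq:scalar-poincare} applies to every coordinate, and the $N$ inequalities can be summed. The radial retraction is $2$-Lipschitz with range of diameter $2R$, and $\|v_N(sx)-v_N(x)\|_p\le\|f(sx)-f(x)\|_p$. This gives
\[
\int\min\{\|v_N\|_p^p,R^p\}\,d\mu\le 2^pC_p\sum_{s\in S}\int\min\{\|f(sx)-f(x)\|_p^p,R^p\}\,d\mu ,
\]
with every integrand bounded. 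Let $N\to\infty$ by monotone convergence, divide by $R^p$, and then let $R\to\infty$. The left side tends to $\mu\bigl(\sum_n|f_n-m_n|^p=\infty\bigr)$ and the right side tends to $0$.

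This is the paper's proof. The coordinatewise medians themselves are the required $m$, so no separate smallness step, symmetrization, or ergodicity argument is needed.
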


\begin{proof}
Choose a finite median $m_n$ of each real random variable $f_n$,
and set $m=(m_n)_{n\in\N}$.
Use \eqref{eq:scalar-poincare} for this exponent, and write $S$
for its finite set and $C_p$ for its constant.
For $N\ge1$ put
\[
 v_N(x)=(f_0(x)-m_0,\ldots,f_{N-1}(x)-m_{N-1})\in\ell_p^N.
\]
For $R>0$ define the radial retraction onto the ball of radius $R$ by
\[
 r_R(v)=
 \begin{cases}
 v\min\{1,R/\|v\|_p\},&v\ne0,\\
 0,&v=0,
 \end{cases}
 \qquad u_{N,R}=r_R\circ v_N .
\]
Each coordinate of $u_{N,R}$ is bounded and has median zero.
Indeed, its multiplier is strictly positive, so its positive
and negative sets are exactly those of the corresponding coordinate
of $v_N$.
Apply \eqref{eq:scalar-poincare} separately to these $N$ coordinates
and add. This gives
\begin{equation}\label{eq:radial-poincare}
 \int\min\{\|v_N(x)\|_p^p,R^p\}\,d\mu(x)
 \le C_p\sum_{s\in S}
      \int\|u_{N,R}(sx)-u_{N,R}(x)\|_p^p\,d\mu(x).
\end{equation}

The radial retraction in any normed space is $2$-Lipschitz and
has image of diameter at most $2R$. Thus
\begin{equation}\label{eq:radial-bound}
 \|r_R(v)-r_R(w)\|
      \le2\min\{\|v-w\|,R\}.
\end{equation}
To check the Lipschitz bound, suppose $\|v\|\ge\|w\|$ and write
$a=\min\{1,R/\|v\|\}$, $b=\min\{1,R/\|w\|\}$, with the
natural value $1$ at zero. Then $a\le b$ and
\[
 \|av-bw\|\le a\|v-w\|+(b-a)\|w\|
              \le2\|v-w\|.
\]
The last bound follows by considering whether $v,w$ are inside
or outside the ball.

For $s\in S$ let
$c_s(x)=\|f(sx)-f(x)\|_p$, which is measurable and finite
almost everywhere. No assertion that $c_s^p$ is integrable is made.
Combining \eqref{eq:radial-poincare} and
\eqref{eq:radial-bound} yields
\begin{equation}\label{eq:truncated-energy}
 \int\min\left\{\sum_{n<N}|f_n(x)-m_n|^p,R^p\right\}\,d\mu(x)
 \le 2^p C_p\sum_{s\in S}
           \int\min\{c_s(x)^p,R^p\}\,d\mu(x).
\end{equation}
All integrands in this inequality are bounded for fixed $R$.

First let $N\to\infty$, using monotone convergence. Put
$H(x)=\sum_n|f_n(x)-m_n|^p\in[0,\infty]$ and divide by $R^p$:
\[
 \int\min\{H(x)/R^p,1\}\,d\mu(x)
 \le 2^pC_p\sum_{s\in S}
       \int\min\{c_s(x)^p/R^p,1\}\,d\mu(x).
\]
Now let $R\to\infty$. Bounded convergence sends the left side
to $\mu(H=\infty)$, while every term on the right tends to zero,
since $c_s$ is finite almost everywhere and $S$ is finite.
It follows that $H<\infty$ almost everywhere. This is exactly
the required conclusion $f(x)-m\in\ell_p$.
\end{proof}

\begin{theorem}\label{thm:all-finite-p}
For every $1\le p<\infty$, the relations $E_\infty$ and $\ell_p$
are Borel incomparable:
\[
 E_\infty\nleq_B\ell_p
 \qquad\text{and}\qquad
 \ell_p\nleq_B E_\infty .
\]
\end{theorem}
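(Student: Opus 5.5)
The reverse nonreduction $\ell_p\nleq_B E_\infty$ I will take from Hjorth's turbulence theory, as recorded in Section~\ref{sec:established}. The coset action of the additive group $\ell_p$ on $\R^\N$ is turbulent, so $\ell_p$ is not classifiable by countable structures. Every countable Borel equivalence relation, and in particular $E_\infty$, is classifiable by countable structures. Hence no reduction $\ell_p\leq_B E_\infty$ exists. The real work is therefore the forward direction, which should follow from Theorem~\ref{thm:spectral-homomorphism} applied to the free Bernoulli shift.

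For the forward direction, suppose $f:2^{\mathbb F_2}\to\R^\N$ is a Borel reduction of $E_\infty$ to $\ell_p$. I use the shift model of $E_\infty$ from Section~\ref{sec:prelim}. Let $\mu$ be the fair product measure on $2^{\mathbb F_2}$; the Bernoulli action of $\mathbb F_2$ on it is measure preserving and has spectral gap \cite[Remark~6.4]{KidaTuckerDrob}. Since $gx\mathrel{E_\infty}x$ for every $g$ and $x$, the reduction property gives $f(gx)-f(x)\in\ell_p$ for all $x$, which is more than the hypothesis of Theorem~\ref{thm:spectral-homomorphism} requires. That theorem therefore produces $m\in\R^\N$ such that $f(x)-m\in\ell_p$ for $\mu$-almost every $x$.

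So $f$ maps a $\mu$-conull Borel set $A$ into the single $\ell_p$-class of $m$. Because $f$ is a reduction, any two points of $A$ must then be $E_\infty$-equivalent, so $A$ lies inside one orbit of the countable group $\mathbb F_2$ and is countable. But $\mu$ is atomless, so every countable set is $\mu$-null, while $A$ is conull. This contradiction rules out the reduction, completing the argument.

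I expect essentially no obstacle at this stage, since the analytic content sits in Theorem~\ref{thm:spectral-homomorphism}. The only points to check are measure-theoretic: $f$ is Borel, hence measurable, and $\mu$ is atomless, since each singleton is a point of an infinite fair product. One should also confirm that the turbulence citation covers every finite $p\geq1$, not only the Banach range used elsewhere. The $\ell_p$ group is Polish under its norm and acts on $\R^\N$ turbulently, with dense meager orbits and turbulent local orbits. This is the standard example in \cite{HjorthBook,Gao}, and it holds uniformly in $1\le p<\infty$.
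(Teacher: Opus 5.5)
Your proposal is correct and matches the paper's proof essentially step for step. The forward direction applies Theorem~\ref{thm:spectral-homomorphism} to the fair Bernoulli shift of $\mathbb F_2$, which forces a conull set into a single $\ell_p$-class and hence into one countable, null orbit; the reverse direction is the cited turbulence obstruction, using that countable Borel relations are classifiable by countable structures.
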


\begin{proof}
Equip $X=2^{\mathbb F_2}$ with the fair Bernoulli product
probability measure. Its shift action preserves this atomless
measure and has spectral gap \cite[Remark~6.4]{KidaTuckerDrob}.
Suppose $F:X\to\R^\N$ reduced its orbit relation to $\ell_p$.
It is a homomorphism, so Theorem~\ref{thm:spectral-homomorphism}
gives an $m\in\R^\N$ with
$F(x)-m\in\ell_p$ on a conull set $A$.
Consequently all points of $A$ have $\ell_p$-equivalent images.
The reverse implication in the definition of a reduction forces
$A$ to lie in one orbit. Every orbit is countable and has measure
zero, a contradiction.
This orbit relation is a model of $E_\infty$; alternatively its
reducibility to $E_\infty$ already suffices.

The reverse nonreduction is the established turbulence obstruction
for $\ell_p$ recalled in Section~\ref{sec:established}
\cite{HjorthBook,Gao}. It is cited here, not reproved.
\end{proof}

\begin{remark}
The argument works for every finite $p$, but does not extend by
letting $p=\infty$: its central step is the coordinate sum in
\eqref{eq:truncated-energy}. In fact
$E_\infty\le_B\ell_\infty$ by Rosendal's theorem.
Nor does it give an obstruction for $c_0$, to which $E_\infty$
does reduce. The order of the two limits in the proof is essential:
the number of coordinates tends to infinity before the truncation
radius does.
\end{remark}

\subsection{The universal treeable relation}\label{sec:treeable}

We determine the additional row and column of Table~\ref{tab:matrix}.
The treeable model from Section~\ref{sec:prelim} is essential for the
finite-exponent obstruction: a nonreduction with source $E_\infty$
cannot in general be restricted to an arbitrary subrelation.

\begin{theorem}\label{thm:treeable-comparisons}
Within the benchmark family of Section~1, the relations strictly
below $E_{\infty T}$ are
\[
 \id(\N),\quad\id(2^\N),\quad E_0.
\]
The relations strictly above $E_{\infty T}$ are
\[
 E_{\infty T}^\omega,\quad E_\infty,\quad E_\infty^\omega,\quad =^+,\quad
 c_0,\quad\meq,\quad\ell_\infty,\quad E_1^+,\quad\ell_\infty^+.
\]
The incomparable relations are
\[
 E_0^\omega,\quad E_1,\quad E_1^\omega,\quad
 \ell_p\quad(1\le p<\infty).
\]
\end{theorem}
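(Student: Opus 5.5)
Everything reduces to checking each vertex of the family against $E_{\infty T}$ in both directions. For each vertex I will use one of three tools: the classical positions recalled in Section~\ref{sec:established}, the new obstructions proved above, or the propagation rule (if $R\nleq_B S$, $R\leq_B E$ and $F\leq_B S$, then $E\nleq_B F$).

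\textbf{Relations below.} We have $\id(\N)<_B\id(2^\N)<_B E_0<_B E_{\infty T}$ by \cite[Section~3.5]{JKL}.

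\textbf{Relations above.} The upward reductions all come from $E_{\infty T}\le_B E_\infty$ composed with known arrows:
\[
E_\infty\le_B E_\infty^\omega\le_B{=^+}\le_B{\equiv_m},\qquad
E_\infty\le_B c_0 \ \text{(Corollary~\ref{thm:countable-czero-comparisons})},
\]
\[
E_\infty\le_B\ell_\infty\le_B\ell_\infty^+,\qquad {=^+}\le_B E_1^+,
\]
together with $E_{\infty T}\le_B E_{\infty T}^\omega$. Strictness is proved target by target.
\begin{enumerate}
\item $E_{\infty T}^\omega$ and $E_\infty$: by Theorem~\ref{thm:propertyT-treeable}, the orbit relation of an ergodic p.m.p.\ action of $\mathrm{SL}_3(\Z)$ is a countable relation that does not reduce to $E_{\infty T}^\omega$. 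It reduces to $E_\infty$, so neither $E_\infty$ nor $E_{\infty T}^\omega$ reduces to $E_{\infty T}$.
\item $E_\infty^\omega$, ${=^+}$, $\ell_\infty$, $E_1^+$, $\ell_\infty^+$: each lies above $E_\infty$, so none reduces to $E_{\infty T}$ by propagation.
\item $c_0$ and ${\equiv_m}$: a relation reducible to a countable Borel relation is classifiable by countable structures, so turbulence gives $c_0\nleq_B E_{\infty T}$. For ${\equiv_m}$, use ${=^+}\le_B{\equiv_m}$ and ${=^+}\nleq_B E_\infty$.
\end{enumerate}

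\textbf{Incomparables.}
\begin{enumerate}
\item $E_0^\omega$: it is not essentially countable, since $E_0^\omega\nleq_B E_\infty$ is classical. In the other direction, ergodic $\mathrm{SL}_3(\Z)$-actions satisfy $R\le_B E_{\infty T}$ would give $R\le_B E_0^\omega$, contradicting Theorem~\ref{thm:propertyT-treeable} applied with $F=E_0$. The simpler route is the one I will take: the free Bernoulli relation of $\mathbb F_2$ is not measure-hyperfinite, hence by Miller \eqref{eq:miller-separation} $E_{\infty T}\nleq_B E_1^\omega$, and therefore $E_{\infty T}\nleq_B E_0^\omega$ because $E_0^\omega\le_B E_1^\omega$.
\item $E_1$: $E_1\nleq_B E_{\infty T}$ by Kechris--Louveau, since $E_{\infty T}$ is an orbit relation of $\mathbb F_2$. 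Also $E_{\infty T}\nleq_B E_1$ because $E_1\le_B E_1^\omega$ and \eqref{eq:miller-separation} holds.
\item $E_1^\omega$: $E_{\infty T}\nleq_B E_1^\omega$ is \eqref{eq:miller-separation}; conversely $E_1^\omega\nleq_B E_{\infty T}$ follows from $E_1\le_B E_1^\omega$.
\item $\ell_p$: $\ell_p\nleq_B E_{\infty T}$ by turbulence. The forward direction is the genuinely new point and cannot be inherited from $E_\infty$. Here I will rerun the proof of Theorem~\ref{thm:all-finite-p} on $X_T$. The fair Bernoulli measure on $2^{\mathbb F_2}$ is concentrated on the free part $X_T$, since each nontrivial element of $\mathbb F_2$ has infinite order and hence infinitely many disjoint orbits, so its fixed set is null. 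The measure therefore restricts to an invariant probability on $X_T$ with the same spectral gap. A reduction $E_{\infty T}\to\ell_p$ is a homomorphism, so Theorem~\ref{thm:spectral-homomorphism} puts a conull set into one class. That class is a countable null orbit, which is a contradiction.
\end{enumerate}

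The main obstacle is bookkeeping rather than mathematics: making sure each cited input applies to the exact model $X_T$. This concerns the null fixed sets and the spectral gap under restriction for the $\ell_p$ step, and non-hyperfiniteness of the free Bernoulli relation for Miller's theorem. It also requires checking that no incomparability is accidentally derived from facts about $E_\infty$ that do not pass down to subrelations.
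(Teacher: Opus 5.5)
Your proposal handles every entry except one the way the paper does. The $\ell_p$ column uses spectral gap on the free part, and the tail relations use Miller. $E_1$ is handled by Kechris--Louveau, and the reverse directions use turbulence and non-essential-countability. The exception is that the strictness of $E_{\infty T}<_B E_{\infty T}^\omega$ is not actually proved.

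From $R\nleq_B E_{\infty T}^\omega$ and $R\le_B E_\infty$ you correctly get $E_\infty\nleq_B E_{\infty T}$, since otherwise $R\le_B E_{\infty T}\le_B E_{\infty T}^\omega$. But you cannot conclude $E_{\infty T}^\omega\nleq_B E_{\infty T}$. To refute $E_{\infty T}^\omega\le_B E_{\infty T}$ you need a relation that reduces to $E_{\infty T}^\omega$ but not to $E_{\infty T}$, and $R$ reduces to neither. The property $(\mathrm T)$ obstruction says nothing about this comparison.

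The missing step is the one the paper uses. The diagonal-free coordinatewise map gives $E_0^\omega\le_B E_{\infty T}^\omega$ from $E_0\le_B E_{\infty T}$. Moreover $E_0^\omega$ is not essentially countable, a fact you already invoke in the $E_0^\omega$ item, whereas $E_{\infty T}$ is countable. Hence $E_{\infty T}^\omega\nleq_B E_{\infty T}$.

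A smaller point concerns the route you discard in the $E_0^\omega$ item. It starts from $R\le_B E_{\infty T}$ for an ergodic $\mathrm{SL}_3(\Z)$-action, which is precisely what Theorem~\ref{thm:propertyT-treeable} rules out, so it could not have worked. The Miller argument you adopt instead is the paper's.

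Your use of property $(\mathrm T)$ to get $E_\infty\nleq_B E_{\infty T}$ is a valid substitute for the classical citation of Jackson--Kechris--Louveau that the paper relies on. Your direct turbulence argument for $c_0\nleq_B E_{\infty T}$ is also fine. Both relations also lie above $E_\infty$, so propagation would have sufficed there too.
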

\begin{proof}
Write $T=E_{\infty T}$ on $X_T=\operatorname{Free}(2^{\mathbb F_2})$.
Give the full shift its fair Bernoulli probability $\mu$.
The set $X_T$ is conull: for each nonidentity $g\in\mathbb F_2$,
the condition $g\cdot x=x$ forces equal independent bits along an
infinite $\langle g\rangle$-orbit and has probability zero; then take
the countable union over $g$. The restricted action preserves
probability and retains the spectral gap of the Bernoulli action
\cite[Remark~6.4]{KidaTuckerDrob}. Its classes are countable and
$\mu$-null.

Suppose that $f:X_T\to\R^\N$ reduced $T$ to $\ell_p$ for some
finite $p$. Theorem~\ref{thm:spectral-homomorphism} gives
$m\in\R^\N$ such that $f(x)-m\in\ell_p$ on a conull set.
The reduction condition forces that set into one $T$-class,
contradicting its measure zero. Hence $T\nleq_B\ell_p$.
Conversely, $\ell_p\nleq_B T$ because turbulence implies that
$\ell_p$ is not essentially countable \cite{HjorthBook,Gao}.

Next suppose that $f=(f_j)_{j\in\N}$ reduced $T$ to $E_1^\omega$.
For $j,k\in\N$, define
\[
 x\,R_{j,k}\,y
 \quad\Longleftrightarrow\quad
 (\forall n\ge k)\ (f_j(x))_n=(f_j(y))_n.
\]
The tail-code map makes each $R_{j,k}$ smooth, and
$R_{j,k}\subseteq R_{j,k+1}$. Moreover,
\[
 T=\bigcap_{j\in\N}\bigcup_{k\in\N}R_{j,k}.
\]
Miller's theorem \cite[Theorem~4]{Miller} implies that $T$ is
hyperfinite on a $\mu$-conull Borel set. This contradicts the
nonhyperfiniteness of the free probability-preserving Bernoulli
$\mathbb F_2$-action; see \cite[Section~3.5]{JKL}.
Thus $T\nleq_B E_1^\omega$, and the same holds for the smaller
targets $E_1$ and $E_0^\omega$.
Kechris--Louveau's orbit-relation obstruction \cite{KL} gives
$E_1\nleq_B T$, and hence $E_1^\omega\nleq_B T$.
Finally, $E_0^\omega\nleq_B T$ follows from
\cite[Theorem~8.5.2]{Gao}, since the presentation of $T$ above is
$F_\sigma$. Alternatively, the classical Sixth Dichotomy
\cite[Theorem~7.1]{HK2001}, together with the nonessential countability
of $E_0^\omega$, gives its incomparability with the nonhyperfinite
countable relation $T$.

The relations below $T$ follow from the classical chain
\[
 \id(\N)<_B\id(2^\N)<_B E_0<_B T<_B E_\infty
\]
recalled in Section~\ref{sec:established}.
The diagonal map gives $T\leq_B T^\omega$. This inequality is
strict because $E_0^\omega\leq_B T^\omega$ and $E_0^\omega$ is not
essentially countable, whereas $T$ is countable. Each of the other
claimed upper relations lies above $E_\infty$:
\[
\begin{gathered}
 E_\infty\leq_B E_\infty^\omega\leq_B{=^+}\leq_B{\equiv_m},
 \qquad E_\infty\leq_B c_0,\\
 {=^+}\leq_B E_1^+\leq_B\ell_\infty^+,
 \qquad E_\infty\leq_B\ell_\infty.
\end{gathered}
\]
Here the reduction to $c_0$ is
Theorem~\ref{thm:countable-czero-comparisons}; it can also be applied
directly to the degree-four treeing of $T$.
These remaining upper comparisons are strict because $E_\infty\nleq_B T$.
This accounts for every comparison involving $T$.
\end{proof}

\begin{remark}[Coset relations and arbitrary group actions]
Allison proved that every countable treeable Borel equivalence relation
is Borel reducible to the orbit relation of some continuous action of
the additive Polish group $\ell_1$ \cite[Theorem~1.2]{Allison}.
This answers negatively Hjorth's question whether every countable
Borel relation classifiable by an abelian Polish group is hyperfinite
\cite[Question~1.1]{Allison}. The theorem concerns the existence of
an action; it does not identify that action with translation on
$\R^\N$. The latter action has Gao's coset relation $\ell_1$,
to which Theorem~\ref{thm:treeable-comparisons} gives a nonreduction.
Thus the treeable conclusion here is consistent with Allison's
positive classification theorem. Frisch--Shinko subsequently announced
the corresponding arbitrary-action classification for every countable
Borel relation \cite[slide~8]{FrischShinkoSlides}, and their slides
also treat countable products \cite[slide~19]{FrischShinkoSlides}.
The question about the natural $\ell_1$ coset relation is explicitly
distinguished in \cite[slide~20]{FrischShinkoSlides}, where the
conjecture that countable relations below it must be hyperfinite is
attributed to Hjorth. Our nonreductions concern particular source
relations and do not settle that general conjecture.
We use the cited treeability and measure-hyperfiniteness results with
attribution; the additional
sequence-space comparisons are consequences of the constructions
and spectral-gap theorem proved in this paper.
\end{remark}

\subsection{The countable power of the universal treeable relation}
\label{sec:treeable-power}

Put $P=E_{\infty T}^\omega$. The following theorem records its place
in the enlarged benchmark list. The strict interval and the
incomparability with $E_\infty$ are consequences of the classical
results cited in Section~\ref{sec:property-T}; their inclusion here
adds a comparison vertex, not a claim of a new rigidity theorem.

\begin{theorem}\label{thm:treeable-power-comparisons}
The relations in our family strictly below $P$ are
\[
 \id(\N),\quad\id(2^\N),\quad E_0,\quad E_0^\omega,\quad E_{\infty T}.
\]
The relations strictly above $P$ are
\[
 E_\infty^\omega,\quad =^+,\quad c_0,\quad\meq,\quad
 E_1^+,\quad\ell_\infty^+.
\]
The incomparable relations are
\[
 E_\infty,\quad E_1,\quad E_1^\omega,\quad\ell_\infty,\quad
 \ell_p\quad(1\le p<\infty).
\]
In particular,
\[
 E_0^\omega<_B P<_B E_\infty^\omega,
 \qquad P\inc E_\infty.
\]
\end{theorem}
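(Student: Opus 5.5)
The plan is to sort every vertex into one of three lists by combining two ingredients. The upward ingredient is monotonicity of the countable power: a reduction $E\le_B F$ gives $E^\omega\le_B F^\omega$. The downward ingredient is the property-$(\mathrm T)$ obstruction, Theorem~\ref{thm:propertyT-treeable}. As a fixed source we take the ergodic probability-preserving action of $\mathrm{SL}_3(\Z)$ on $(\T^3,\text{Haar})$ (or its Bernoulli shift), with orbit relation $R$; $R$ is countable, so $R\le_B E_\infty$. Theorem~\ref{thm:propertyT-treeable} applied with $F=E_{\infty T}$ gives $R\nleq_B P$, and hence $E_\infty\nleq_B P$ and $E_\infty^\omega\nleq_B P$. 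The other direction, $P\nleq_B E_\infty$, holds because $E_0^\omega\le_B P$ (from $E_0\le_B E_{\infty T}$) and $E_0^\omega$ is not essentially countable.

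\textbf{Relations below $P$.} We have $E_0^\omega\le_B P$ and $E_{\infty T}\le_B P$ (diagonal map); the trivial vertices lie below $E_0$. For strictness:
\begin{itemize}
\item $P\nleq_B E_0^\omega$: otherwise $E_{\infty T}\le_B E_0^\omega\le_B E_1^\omega$, contradicting \eqref{eq:miller-separation}.
\item $P\nleq_B E_{\infty T}$: as in Theorem~\ref{thm:treeable-comparisons}, since $E_0^\omega$ is not essentially countable.
\end{itemize}

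\textbf{Relations above $P$.} Monotonicity gives $P\le_B E_\infty^\omega$, and the established chain then gives
\[
 E_\infty^\omega\le_B{=^+}\le_B{\equiv_m},\qquad {=^+}\le_B E_1^+\le_B\ell_\infty^+ .
\]
For $c_0$, use $E_\infty^\omega\le_B c_0$ from Corollary~\ref{thm:countable-czero-comparisons}. Every one of these targets lies above $E_\infty$, so strictness follows immediately from $E_\infty\nleq_B P$.

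\textbf{Incomparable relations.}
\begin{itemize}
\item $E_\infty$: incomparable with $P$ by the opening paragraph.
\item $E_1$ and $E_1^\omega$: $P\nleq_B E_1^\omega$, because $E_{\infty T}\le_B P$ and \eqref{eq:miller-separation} applies. For the reverse, $E_1\nleq_B P$ by Kechris--Louveau, since $P$ is the orbit relation of the Polish group $(\mathbb F_2)^\N$; this also gives $E_1^\omega\nleq_B P$.
\item $\ell_\infty$: $P\nleq_B\ell_\infty$ because $E_0^\omega\le_B P$ and $E_0^\omega\nleq_B\ell_\infty$. For the reverse, $\ell_\infty\nleq_B P$ because $E_1\le_B\ell_\infty$ and $E_1\nleq_B P$.
\item $\ell_p$: $P\nleq_B\ell_p$ because $E_{\infty T}\nleq_B\ell_p$ (Theorem~\ref{thm:treeable-comparisons}). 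For the reverse we need $\ell_p\nleq_B P$. Since $P$ is an orbit relation of a product of countable discrete groups, this should follow from turbulence: $\ell_p$ is not classifiable by countable structures, while $P$ reduces to $E_\infty^\omega$, which reduces to an $S_\infty$-orbit relation.
\end{itemize}

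\textbf{The remaining vertex.} The one case not yet placed is $E_\infty^\omega$ against $P$. We have $P\le_B E_\infty^\omega$, and $E_\infty^\omega\nleq_B P$ because $E_\infty\le_B E_\infty^\omega$ and $E_\infty\nleq_B P$.

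\textbf{Main obstacle.} All the separations needed are already available in the paper, so no new construction is required. The work lies in tracking which cited nonreduction and which composition justifies each entry. The step requiring most care is the $\mathrm{T}$-rigidity input: Theorem~\ref{thm:propertyT-treeable} needs an atomless ergodic action of a property-$(\mathrm T)$ group, and the conclusion must be passed correctly from $R$ to $E_\infty$ via $R\le_B E_\infty$.
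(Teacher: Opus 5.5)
Your proposal is correct and follows essentially the same route as the paper: the property-$(\mathrm T)$ obstruction with $F=E_{\infty T}$ gives $E_\infty\nleq_B P$; non-essential-countability of $E_0^\omega$ gives $P\nleq_B E_\infty$; Miller's separation handles the tail relations; Kechris--Louveau gives $E_1\nleq_B P$; and turbulence via $P\le_B E_\infty^\omega\le_B{=^+}$ gives $\ell_p\nleq_B P$. The only difference is cosmetic: you also allow the toral action $\mathrm{SL}_3(\Z)\curvearrowright\T^3$ as the property-$(\mathrm T)$ source, and it meets the same hypotheses as the paper's Bernoulli shift, namely an atomless invariant Haar measure and an ergodic action.
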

\begin{proof}
Let $T=E_{\infty T}$. Coordinatewise reductions give
$E_0^\omega\leq_B P\leq_B E_\infty^\omega$, and the diagonal
map gives $T\leq_B P$. If $P\leq_B E_0^\omega$, then
$T\leq_B E_0^\omega$, contradicting
Theorem~\ref{thm:treeable-comparisons}. Hence the first inequality
is strict. Since $E_0^\omega$ is not essentially countable,
$P\nleq_B E_\infty$ and $T<_B P$ as well.

To prove the crucial reverse obstruction, take
$\Gamma=\mathrm{SL}_3(\Z)$ acting by shifts on $2^\Gamma$ with
fair Bernoulli probability $\mu$, and let $R$ be its orbit relation.
The measure is atomless and invariant. The action is ergodic:
finite-coordinate cylinder sets become independent once the translated
coordinate supports are disjoint, and measurable sets are approximated
by cylinder sets. Theorem~\ref{thm:propertyT-treeable}, with
$F=T$, gives $R\nleq_B P$. As $R\leq_B E_\infty$, we obtain
\[
 E_\infty\nleq_B P.
\]
Consequently $E_\infty^\omega\nleq_B P$, proving the upper
strict inequality and the asserted incomparability with $E_\infty$.
This is the application of the classical countable-product obstruction
from \cite[Section~3.7, after Theorem~3.31]{JKL}.

All the listed upper targets lie above $E_\infty^\omega$, using
Theorem~\ref{thm:countable-czero-comparisons} for $c_0$ and the
established reductions of Section~\ref{sec:established} for the others.
They therefore lie above $P$. Each also lies above $E_\infty$,
so none reduces to $P$. This proves all the upper strictness assertions.
The remaining lower assertions follow by composition from
$\id(\N)<_B\id(2^\N)<_B E_0<_B T<_B P$.

For the tail relations, $T\leq_B P$ and
$T\nleq_B E_1^\omega$ imply $P\nleq_B E_1^\omega$ and
$P\nleq_B E_1$. Conversely, $P$ is an orbit relation of the
Polish group $(\mathbb F_2)^\N$, so the Kechris--Louveau theorem
\cite{KL} gives $E_1\nleq_B P$. It follows that
$E_1^\omega\nleq_B P$ and $\ell_\infty\nleq_B P$, since both
lie above $E_1$. The nonreduction $P\nleq_B\ell_\infty$ follows
from $E_0^\omega\leq_B P$ and
$E_0^\omega\nleq_B\ell_\infty$.

Finally, $T\leq_B P$ and $T\nleq_B\ell_p$ give
$P\nleq_B\ell_p$ for every finite $p$. In the other direction,
$P\leq_B E_\infty^\omega\leq_B{=^+}$, whereas
$\ell_p\nleq_B{=^+}$ by turbulence \cite{HjorthBook,Gao}.
Thus $\ell_p\nleq_B P$. Every new matrix entry is accounted for.
\end{proof}

\begin{remark}
The presentation of $P$ in Section~\ref{sec:prelim} is $\Pi^0_3$.
It has no Borel-isomorphic $\Sigma^0_3$ presentation: otherwise
$E_0^\omega\leq_B P$ would contradict
\cite[Theorem~8.5.2]{Gao}. Its universality for countable products of
treeable countable Borel relations and its distinct position in
Theorem~\ref{thm:treeable-power-comparisons} make it a natural
additional benchmark.
\end{remark}

\section{The jump obstructions}\label{sec:jumps}
\subsection{The jump of bounded difference and \texorpdfstring{$c_0$}{c0}}\label{sec:c0-jump}
For an abelian group $K$ and a subgroup $H$, the notation $K/H$ below
means the coset equivalence relation $x\sim y$ if $x-y\in H$.
\begin{lemma}[From locally constant rows to a constant countable set]
\label{lem:rows}
Let $E$ be a Borel equivalence relation on a nonempty Polish space $X$.
Suppose every $E$-invariant Baire set is meager or comeager. Let
$F:X\to(\mathbb R^{\mathbb N})^{\mathbb N}$ be a Borel homomorphism
from $E$ to $\ell_\infty^+$, and write $F_k(x)=F(x)_k$.
Assume that, for every $k$ and every nonempty open $A\subseteq X$, there
are a nonempty open $O\subseteq A$ and $a\in\mathbb R^{\mathbb N}$ such
that $F_k(x)\mathrel{\ell_\infty}a$ for comeager many $x\in O$.
Then $F$ is constant modulo $\ell_\infty^+$ on a comeager subset of $X$.
\end{lemma}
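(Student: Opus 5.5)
The plan is to use a countable base to identify one countable family of $\ell_\infty$-classes, and then show that for comeager many $x$ the set of classes of the rows of $F(x)$ is exactly this family. The upper bound on the classes comes from local constancy on basic open sets. The lower bound comes from generic ergodicity of $E$ applied to an invariant Borel set.

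First I fix a countable base $\{B_i:i\in\N\}$ of nonempty open subsets of $X$. I call a pair $(k,i)$ \emph{good} if there is $a\in\R^\N$ with $F_k(x)\mathrel{\ell_\infty}a$ for comeager many $x\in B_i$. For each good pair I choose such a witness $a_{k,i}$; no definable choice is needed, since there are only countably many pairs. Passing from an open $O$ to a basic $B_i\subseteq O$ preserves the comeager-in-$O$ property. Hence the hypothesis says that, for each $k$, every nonempty open set contains some $B_i$ with $(k,i)$ good. So $U_k=\bigcup\{B_i:(k,i)\text{ good}\}$ is dense open. Let $M_{k,i}\subseteq B_i$ be the meager exceptional set for the good pair $(k,i)$. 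The set
\[
 Y_0=\bigcap_k U_k\setminus\bigcup_{(k,i)\text{ good}}M_{k,i}
\]
is comeager. For $x\in Y_0$ and every $k$, choose $i$ with $x\in B_i$ and $(k,i)$ good; then $F_k(x)\mathrel{\ell_\infty}a_{k,i}$. Thus every class $[F_k(x)]$ lies in the family $\mathcal S=\{[a_{k,i}]:(k,i)\text{ good}\}$.

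For the reverse inclusion I fix a good pair $(k,i)$ and consider
\[
 A_{k,i}=\{x:(\exists j)\ F_j(x)\mathrel{\ell_\infty}a_{k,i}\}.
\]
This set is Borel, because $\ell_\infty$ is Borel and $F$ is Borel, so it has the Baire property. It is $E$-invariant: if $x\mathrel{E}y$, then $F(x)\mathrel{\ell_\infty^+}F(y)$, so $F(x)$ and $F(y)$ have the same set of row classes. It contains the nonmeager set $B_i\setminus M_{k,i}$. By the generic ergodicity assumption it is therefore comeager. Intersecting these countably many comeager sets with $Y_0$ gives a comeager $Y$. On $Y$, the set $\{[F_j(x)]:j\in\N\}$ equals $\mathcal S$, so any two points of $Y$ are $\ell_\infty^+$-equivalent, which is the claim.

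I expect no serious obstacle. The only care points are that the invariant sets are genuinely Borel, so that generic ergodicity applies, and that the hypothesis is transferred to basic open sets so that all choices are countable. Nonemptiness of $\mathcal S$ is automatic, since $X$ is nonempty and $U_0\neq\emptyset$.
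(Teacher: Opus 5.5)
Your proof is correct and is essentially the paper's argument. You use a countable base to produce one countable family of witness classes that contains every row class on a comeager set, and then apply generic ergodicity to the invariant Borel sets ``some row is $\ell_\infty$-equivalent to $a_{k,i}$''. The only difference is cosmetic: you also identify the generic value (each such set contains a nonmeager piece $B_i\setminus M_{k,i}$, so it is comeager and the set of row classes is exactly $\mathcal S$), whereas the paper only observes that the indicator vector $b$ is constant on a comeager set.
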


\begin{proof}
Fix $k$. Apply the assumption inside every member of a countable base for
$X$. We obtain countably many open sets $O_{k,j}$ with dense union and
representatives $a_{k,j}$ such that the exceptional subset of each
$O_{k,j}$ is meager. Remove these exceptional sets and the complement of
their dense open union. Repeating this for all $k$ and taking a countable
intersection gives a comeager set $X_0\subseteq X$ and one countable
family $(a_r)_{r\in\mathbb N}$ such that
\[
  x\in X_0\quad\Longrightarrow\quad
  (\forall k)(\exists r)\ F_k(x)\mathrel{\ell_\infty}a_r.
\]
Define the Borel map $b:X\to2^{\mathbb N}$ by
\[
  b(x)(r)=1\quad\Longleftrightarrow\quad
  (\exists k)\ F_k(x)\mathrel{\ell_\infty}a_r.
\]
The homomorphism property makes every coordinate of $b$ invariant under
$E$ on all of $X$. By generic ergodicity, each coordinate is constant on
a comeager set. Thus $b$ is constant on a single comeager set $X_1$.
For $x,y\in X_0\cap X_1$, all row classes belong to the fixed countable
family, and equality of $b(x)$ and $b(y)$ says exactly that the sets of
row classes coincide. Hence $F(x)\mathrel{\ell_\infty^+}F(y)$.
\end{proof}

\begin{theorem}
\label{thm:c0-jump}
The equivalence relation $c_0$ is not Borel reducible to
$\ell_\infty^+$.
\end{theorem}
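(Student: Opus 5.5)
Suppose $F$ is a Borel reduction of $c_0$ to $\ell_\infty^+$. To reach a contradiction, we will show that $F$ is constant modulo $\ell_\infty^+$ on a comeager set. Since $c_0$-classes are meager, this is impossible for a reduction. We work on a compact model rather than on all of $\R^\N$: by restricting along $[0,1]^\N\hookrightarrow\R^\N$, we may take $X=[0,1]^\N$ and $E=c_0\cap X^2$. This relation is generically ergodic, because it contains the orbit relation of the dense subgroup of finitely supported rational translations, suitably truncated, and every $E$-class is meager in $X$. The plan is to verify the hypothesis of Lemma~\ref{lem:rows} for each row $F_k$. After that, the lemma yields a comeager set on which $F$ is constant modulo $\ell_\infty^+$. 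That comeager set then meets at least two $E$-classes, which contradicts the reduction property.

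First we prove a local-constancy step for the row maps. Fix $k$ and a nonempty open $A\subseteq X$. Since $F_k$ is Borel, we can write $\ell_\infty=\bigcup_n L_n$. For each pair of points $x,y$, the homomorphism property gives a permutation-free matching of row classes, though not one that depends on $k$ alone. Concretely, if $x\,E\,y$, there is some $j$ with $F_k(x)\,\ell_\infty\,F_j(y)$. So we consider the Borel sets
\[
 B_{k,j,n}=\{(x,y): (F_k(x),F_j(y))\in L_n\}.
\]
On the comeager subset of $E$-related pairs (in the Polish topology of the group $H_0$ acting), these sets cover. We fix a point $x_0$ of continuity of $F$ restricted to a comeager set, and we look at translations $x_0+h$ with $h$ small in $c_0$-norm. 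A Baire category argument in the Polish group $(H_0,d_0)$, via Kuratowski--Ulam on $X\times H_0$, then gives fixed $j,n$ and a nonmeager set of pairs $(x,h)$ with $F_k(x)\,L_n\,F_j(x+h)$. Pettis-type arguments then give an open neighbourhood $U$ of $0$ in $H_0$ and an open $O\subseteq A$ such that for comeager many $x\in O$ and $h\in U$ we have $F_k(x)\,L_{2n}\,F_{j'}(x+h)$ for some $j'$.

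The key point is that, because the $c_0$-topology uses the $\sup$ metric, a $d_0$-neighbourhood $U$ still moves a point of $O$ across the whole of $O$'s product-topology neighbourhood in infinitely many coordinates. Using the triangle rule $L_t\circ L_u\subseteq L_{t+u}$ along a chain of short $c_0$-steps, we want to connect generic $x,x'\in O$ differing in finitely many coordinates (through $E_0$-like moves) and also in a tail. Fixing one such generic $x^*$ and setting $a=F_k(x^*)$, we aim to show $F_k(x)\,\ell_\infty\,a$ for comeager $x\in O$. This is where we expect the main obstacle. The index $j'$ may change along the chain, since rows of $\ell_\infty^+$ are unordered, so a bounded number of steps must be controlled. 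We would first try to prevent the index drift by refining with the Borel set of pairs of indices and applying the same category argument once more, so that $j'=k$ on a nonmeager piece. This is the analogue of an $\ell_\infty$-homomorphism on a Polish group with the $\sup$ topology. We would then use that every element of $[0,1]^\N$ near $x^*$ in the product topology can be reached generically by finitely many $U$-steps, because $U$ contains all sequences with entries below $\delta$ and $O$ has diameter below $\delta$ in the relevant coordinates after shrinking.

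Once the row hypothesis of Lemma~\ref{lem:rows} holds for every $k$, the lemma applies directly. Here $E$ is generically ergodic, and $F$ is a homomorphism into $\ell_\infty^+$. The lemma gives a comeager set on which all $F(x)$ are $\ell_\infty^+$-equivalent. Since $F$ is a reduction, that comeager set is contained in a single $c_0$-class, which is meager. This contradiction proves the theorem.
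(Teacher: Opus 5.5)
Your outer frame (a compact model, generic ergodicity, Lemma~\ref{lem:rows}, meager classes) matches the paper. The verification of the row hypothesis, however, is the entire content of the theorem, and your plan for it has two genuine gaps.

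\textbf{First gap: index drift.} Your sets $B_{k,j,n}$ put the fixed row $k$ on the base point and the free index on the translate. You then hope a further category argument over index pairs forces $j'=k$. A Baire argument only selects \emph{some} index or pair, and nothing makes it diagonal. The missing idea is to compare two points against a \emph{common} third row. The paper takes
\[
 B_{l,n}=\{(x,h):\sup_i|F_k(x+h)_i-F_l(x)_i|\le n\}
\]
and finds a rectangle $U\times W$ on which one $B_{l,n}$ is comeager. For comeager triples $(x,h_1,h_2)\in U\times W\times W$, both $F_k(x+h_1)$ and $F_k(x+h_2)$ are then within $n$ of the same row $F_l(x)$. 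Hence $\sup_i|F_k(x+h_1)_i-F_k(x+h_2)_i|\le2n$, with the \emph{same} index $k$ on both sides. A correctly executed Pettis-type difference step gives a bound of this form, with one index on both sides. It does not give $F_k(x)\,L_{2n}\,F_{j'}(x+h)$ with varying $j'$, and the common point must be the one carrying the free index.

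There is also a technical problem with your model. No null-sequence group acts on $[0,1]^\N$ by translation. The set $\{(x,h):x+h\in[0,1]^\N\}$ has empty interior in $X\times c_0$, so your Baire argument has no open domain. This is why the paper works on $\T^\N$ and transfers back with the chord map.

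\textbf{Second gap, more serious: reaching generic pairs of $O^2$.} You claim generic points of $O$ are reached from $x^*$ by finitely many $U$-steps because $U$ ``contains all sequences with entries below $\delta$.'' But $U\subseteq H_0$ consists of \emph{null} sequences. Finitely many $U$-steps therefore never leave the meager $c_0$-class of $x^*$. Also, $O$ restricts only finitely many coordinates, so it does not have small sup-diameter. The hypothesis of Lemma~\ref{lem:rows} forces you to compare $F_k$ at \emph{inequivalent} points, and the homomorphism property alone cannot do that.

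The paper supplies this with a continuity transfer:
\begin{itemize}
\item Take a dense $G_\delta$ set $D$ on which $F_k$ is continuous. Given $y,z$, set $x=y-h_0$ and $t=z-y$, and use the finite-support truncations $t^{(r)}$ of $t$.
\item The triples $(x,h_0,h_0+t^{(r)})$ lie in $U\times W\times W$, and their endpoints converge to $(y,z)$. Approximating these by good triples and using closedness of the uniform bound gives $\sup_i|F_k(y)_i-F_k(z)_i|\le2n$ for \emph{all} $y,z\in D\cap(U+h_0)$ with $\sup_i|y_i-z_i|_\T<\eta$, whether or not $z-y$ is null.
\item The interpolation maps $P_j(y,z)=y+\frac jm\arg(z-y)$ are continuous, open and onto. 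So for comeager pairs in $O^2$ every chain point lies in $D$, and chaining gives $F_k(y)\,L_{2nm}\,F_k(z)$.
\item Kuratowski--Ulam then produces the fixed $a$ required by Lemma~\ref{lem:rows}.
\end{itemize}
Without this transfer and the open interpolation maps, your chain argument cannot reach a comeager subset of $O$.
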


\begin{proof}
Write $\mathbb T=\mathbb R/\mathbb Z$, give it its shortest-arc metric
$d_{\mathbb T}$ of diameter $1/2$, and put
\[
 K=\mathbb T^{\mathbb N},\qquad
 H=\{h\in K:d_{\mathbb T}(h_n,0)\longrightarrow0\}.
\]
With the metric
$\rho(h,h')=\sup_n d_{\mathbb T}(h_n,h'_n)$, the group $H$ is Polish:
uniform Cauchy sequences converge uniformly to null sequences, and
finite-support rational sequences form a countable dense subset.
It acts continuously on $K$ by translation. Its orbits are dense and
meager. Density follows from finite-support translations. For
meagerness, $H$ is contained in the meager set
\[
 \bigcup_{N\in\mathbb N}\bigcap_{n\ge N}
 \{h\in K:d_{\mathbb T}(h_n,0)\le1/8\},
\]
whose displayed closed constituents have empty interior.

Every $H$-invariant Baire subset of $K$ is meager or comeager. Indeed, if
such a set is comeager in a nonempty open set $U$, the translates of $U$
under the countable group of finite-support rational elements cover $K$;
the corresponding translates of the meager exceptional set show that
the invariant set is comeager in $K$.

Suppose $F:K\to(\mathbb R^{\mathbb N})^{\mathbb N}$ is a Borel
homomorphism from $K/H$ to $\ell_\infty^+$. We verify the local row
condition of Lemma~\ref{lem:rows}. Fix a row $k$ and a nonempty open
$A\subseteq K$. On the nonempty open set
\[
 \Omega_A=\{(x,h)\in K\times H:x+h\in A\}
\]
the sets
\[
 B_{l,n}=\{(x,h):
       \sup_i|F_k(x+h)_i-F_l(x)_i|\le n\},
 \qquad l,n\in\mathbb N,
\]
form a countable Borel cover. The Baire theorem therefore supplies
$l,n$ and a nonempty open rectangle $U\times W\subseteq\Omega_A$ on
which $B_{l,n}$ is comeager. Choose a dense $G_\delta$ set $D\subseteq K$
such that $F_k|D$ is continuous.

In $U\times W\times W$, a comeager set of triples satisfies both
$ (x,h_1)\in B_{l,n}$ and $(x,h_2)\in B_{l,n}$, as well as
$x+h_1,x+h_2\in D$. The endpoint requirements preserve comeagerness
because the maps $(x,h_1,h_2)\mapsto x+h_j$ are continuous and open.
For all such triples,
\begin{equation}
\label{eq:compact-good-triples}
 \sup_i|F_k(x+h_1)_i-F_k(x+h_2)_i|\le2n.
\end{equation}

Fix $h_0\in W$ and $0<\eta<1/4$ with
$h_0+\{h\in H:\rho(h,0)<\eta\}\subseteq W$.
We claim that
\begin{equation}
\label{eq:compact-small-pairs}
 \sup_i|F_k(y)_i-F_k(z)_i|\le2n
\end{equation}
whenever $y,z\in D\cap(U+h_0)$ and
$\sup_i d_{\mathbb T}(y_i,z_i)<\eta$.
To see this, set $x=y-h_0$ and $t=z-y$. Let $t^{(r)}$ agree with $t$ on
its first $r$ coordinates and vanish elsewhere. Then
$(x,h_0,h_0+t^{(r)})\in U\times W\times W$, and its two translated
endpoints converge in $K^2$ to $(y,z)$. Approximate each of these triples
by a triple from the comeager set used in
\eqref{eq:compact-good-triples}, closely enough that its endpoints still
converge to $(y,z)$. All approximating endpoints lie in $D$.
Continuity of $F_k|D$ and closedness of the uniform bound in
\eqref{eq:compact-good-triples} prove the claim.

Choose a basic cylinder $O\subseteq U+h_0$ whose finitely many restricted
coordinates are open arcs of length less than $\min(\eta,1/2)$.
Choose an integer $m$ with $1/(2m)<\eta$. Let $Q\subseteq O^2$ consist
of pairs $(y,z)$ having no antipodal coordinate pair. It is dense
$G_\delta$ in $O^2$. On $Q$ use the principal arguments in
$(-1/2,1/2)$ to define
\[
 P_j(y,z)=y+\frac jm\operatorname{arg}(z-y),
 \qquad j=0,\ldots,m,
\]
coordinatewise. All $P_j(y,z)$ belong to $O$, and consecutive terms are
at sup circle distance at most $1/(2m)<\eta$.

Each map $P_j:Q\to O$ is continuous, open, and onto. For a restricted
coordinate this is ordinary affine interpolation in a short open
interval. For an unrestricted coordinate the principal-arc map is
locally affine away from antipodal pairs, hence open, and is onto by
taking equal endpoints. The product map is open because a basic product
neighborhood restricts only finitely many coordinates and every other
coordinate map is onto. It follows that, for comeager pairs in $O^2$,
all of the finitely many points $P_j(y,z)$ lie in $D$.
Applying \eqref{eq:compact-small-pairs} successively gives
\[
 \sup_i|F_k(y)_i-F_k(z)_i|\le2nm
\]
for comeager pairs in $O^2$. By Kuratowski--Ulam there is a fixed
$a\in\mathbb R^{\mathbb N}$ such that
$F_k(y)\mathrel{\ell_\infty}a$ for comeager $y\in O$.
Since $O\subseteq A$, this proves the required local row condition.

Lemma~\ref{lem:rows} now makes $F$ constant modulo $\ell_\infty^+$ on a
comeager subset of $K$. As every $H$-orbit is meager, $F$ cannot be a
reduction. Finally, the continuous chord map
\[
 x\longmapsto
 (\cos(2\pi x_0),\sin(2\pi x_0),
  \cos(2\pi x_1),\sin(2\pi x_1),\ldots)
\]
reduces $K/H$ to Gao's real null-difference relation $c_0$.
A reduction $c_0\le_B\ell_\infty^+$ would therefore give the impossible
reduction $K/H\le_B\ell_\infty^+$.
\end{proof}

\begin{remark}
The compact source in this proof is essential to the argument. For the
usual action of $c_0$ on $\mathbb R^{\mathbb N}$, the identity map is a
homomorphism to $\ell_\infty$ with no comeager constant fiber. We have
made no generic-ergodicity assertion for that different topological
presentation.
\end{remark}
\subsection{Measure equivalence and the jump of \texorpdfstring{$\ell_\infty$}{ell infinity}}
\label{sec:measure-jump}
Let $X$ be the space of atomless full-support probabilities on $C=2^\N$.
Here full support means that every nonempty open set has positive measure.
This is a dense $G_\delta$ subspace of $\Prob(C)$, hence is Polish.

\begin{theorem}\label{thm:measure-jump}
Every Borel homomorphism
$F:X\to(\R^\N)^\N$ from $\meq$ to $\ell_\infty^+$ is constant modulo
$\ell_\infty^+$ on a comeager subset of $X$. Consequently
$\meq\nleq_B\ell_\infty^+$.
\end{theorem}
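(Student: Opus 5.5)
The plan is to mirror the proof of Theorem~\ref{thm:c0-jump}, replacing the compact translation group by a Polish group acting on $X$ by continuous maps whose orbits lie inside $\meq$-classes. For this we use a group $G$ of positive continuous density multipliers. Let $G$ be the group of continuous functions $\phi:C\to\R$ under addition, with the sup norm. It acts on $X$ by
\[
 (\phi\cdot\mu)(dx)=\frac{e^{\phi(x)}}{\int e^{\phi}\,d\mu}\,\mu(dx).
\]
This action is continuous, preserves atomlessness and full support, and every $\phi\cdot\mu\meq\mu$. Hence $F$ is also a Borel homomorphism from the orbit relation $E_G^X$ to $\ell_\infty^+$.

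\textbf{Generic ergodicity.} We show that $\meq$ on $X$ is generically ergodic. It suffices that some $\meq$-orbit, or some $G$-orbit, is dense, and in fact all $G$-orbits are dense. Given $\mu,\nu\in X$ and a finite clopen partition, pick a locally constant $\phi$, constant on each partition piece, such that $\phi\cdot\mu$ matches $\nu$ on that partition. Full support of $\mu$ makes every piece have positive mass, so such $\phi$ exists. Since every $\meq$-invariant Borel set is $G$-invariant, it is meager or comeager. Also every $\meq$-class is meager: a class is $\Pi^0_3$ (or at least has the Baire property), and if it were comeager, two generic measures would be equivalent. A standard contradiction comes from generic mutual singularity: the set of pairs $(\mu,\nu)$ with $\mu\perp\nu$ is comeager in $X^2$, by a $G_\delta$ description via clopen sets $B$ with $\mu(B)<2^{-r}$ and $\nu(B)>1-2^{-r}$, and density. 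Kuratowski--Ulam then gives meager classes.

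\textbf{Local row condition.} Next we verify the hypothesis of Lemma~\ref{lem:rows} for each row $F_k$, following the same Baire-category steps as in Theorem~\ref{thm:c0-jump}. Cover $\{(\mu,\phi):\phi\cdot\mu\in A\}$ by the sets $B_{l,n}$. Localize to an open rectangle $U\times W$. Obtain the analogue of \eqref{eq:compact-small-pairs}: $\sup_i|F_k(\nu)_i-F_k(\nu')_i|\le 2n$ whenever $\nu,\nu'$ lie in a dense $G_\delta$ continuity set $D$ inside an open set $O$, and $\nu'=\psi\cdot\nu$ with $\|\psi\|_\infty<\eta$. This uses approximating $\psi$ and continuity of $F_k|D$.

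The key step replaces the circle interpolation $P_j$. For generic pairs $(\nu,\nu')\in O^2$ we need a chain $\nu=P_0,\ldots,P_m=\nu'$ of measures in $O\cap D$, consecutive ones related by small multipliers. Generic pairs are mutually singular, so no multiplier joins them exactly. We would therefore work with the $\sigma$-compact or weak-closure version: consecutive points $P_j,P_{j+1}$ only need to be limits of pairs $(\rho,\psi\cdot\rho)$ with $\|\psi\|_\infty<\eta$ and the endpoints in $D$. Concretely, take $P_j=(1-\tfrac jm)\nu+\tfrac jm\nu'$. Then $P_{j+1}$ is approximated weakly by $\psi\cdot P_j$, where $\psi$ is continuous with $|\psi|\lesssim\log(1+1/(m-j))$. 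That bound is small only away from the end; this can be fixed by a geometric rather than linear schedule, using $\log$-uniform steps with $m\sim\log$-scale. Alternatively, and more simply, note that $\{(\rho,\rho'):\rho'=\psi\cdot\rho,\ \|\psi\|\le\eta\}$ has weak closure containing all pairs whose clopen-partition ratios lie in $[e^{-2\eta},e^{2\eta}]$. The convex interpolation maps $Q\to O$ are continuous, open, and onto, so generic pairs have all $P_j\in D$. Chaining then gives the bound $2nm$, and Kuratowski--Ulam produces $a$ with $F_k(\nu)\mathrel{\ell_\infty}a$ for comeager $\nu\in O$.

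\textbf{Conclusion and main obstacle.} Lemma~\ref{lem:rows} makes $F$ constant modulo $\ell_\infty^+$ on a comeager set. Since $\meq$-classes are meager, $F$ is not a reduction. Composing with the fact that restriction to $X$ of a hypothetical reduction on $\Prob(C)$ remains a reduction gives $\meq\nleq_B\ell_\infty^+$. The main obstacle is the interpolation step: making the uniform bound from the multiplier group pass to the closure pairs needed to chain between generic, mutually singular measures, with uniformly bounded chain length $m$. I expect the partition-ratio closure description, together with the linear interpolation and $m$ fixed by $\eta$ via a ratio bound $(j+1)/j$, to require the most care. That ratio bound fails near $j=0$, so there we would instead start the chain from an interior point of $O$ reached by a small multiplier.
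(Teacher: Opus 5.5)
\textbf{The gap is in the interpolation step.} Several parts of your plan are sound. The generic-ergodicity argument is correct (the paper uses the same $C(C,\R)$ action), and so is the meager-class argument. Your first Baire step also works, because the action map is open: for $\nu,\nu'\in O\cap D$ with $\nu'=\psi\cdot\nu$ and $\|\psi\|_\infty<\eta$ you get the $L_{2n}$ bound, and by approximation also for pairs in the weak closure of that relation.

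The interpolation step, however, cannot be made to work with the sup-norm multiplier group at all. If $\rho'=\psi\cdot\rho$ with $\|\psi\|_\infty<\eta$, then $e^{-2\eta}\rho(B)\le\rho'(B)\le e^{2\eta}\rho(B)$ for every clopen $B$. These inequalities are weakly closed, since $\rho\mapsto\rho(B)$ is weakly continuous for clopen $B$. So every pair in the weak closure satisfies them for all clopen sets, hence for all Borel sets, and consists of equivalent measures. Consequently any finite chain $\nu=P_0,\dots,P_m=\nu'$ whose consecutive pairs lie in this closure stays inside one $\meq$-class. But generic pairs in $O^2$ are mutually singular.

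Neither of your remedies escapes this. A geometric schedule does not help, because already the first step from $\nu$ must create mass singular to $\nu$. Starting from an interior point reached from $\nu$ by a small multiplier does not help either, because such a point is equivalent to $\nu$. Your closure description via ``clopen-partition ratios'' must hold for every partition to describe a weak closure, so it is again this equivalence-preserving relation. As written, the local row condition of Lemma~\ref{lem:rows} is not established.

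\textbf{What is missing is a different notion of smallness.} You need one that ignores singular parts of small mass, namely Hellinger distance. Along the linear interpolation $P_j=P_{j/m}(\nu,\nu')$ one has $\Hel(P_j,P_{j+1})^2\le\|P_j-P_{j+1}\|_{\mathrm{TV}}\le2/m$ uniformly, even though $P_0\perp P_m$. Moreover, nonequivalent Hellinger-close pairs are weak limits of equivalent Hellinger-close pairs.

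The paper implements this as follows.
\begin{enumerate}
\item It replaces the group by the Polish bundle $Z$ of pairs $(\mu,f)$, where $f$ is a positive unit vector of $L^2(\mu)$ in Haar coordinates. The source and target maps $s(\mu,f)=\mu$ and $t(\mu,f)=f^2\mu$ are continuous open surjections.
\item It runs the Baire argument on the source fiber square $W\times_s W$. This bounds $F_k$ on targets over a common source whose square-root densities are $L^2$-close.
\item It extends the bound to all, possibly nonequivalent, $\mu_1,\mu_2\in O\cap D$ with $\Hel(\mu_1,\mu_2)<\epsilon$. This is done by mixing with $\sigma=(\mu_1+\mu_2)/2$, using $\Hel(\mu_j,\sigma)^2\le\frac12\Hel(\mu_1,\mu_2)^2$, together with the rescaling $R_w$ that places the common source in $U$.
\item Only then does it chain along $P_{j/m}$ with $2/m<\epsilon^2$, using openness of $P_q$ to keep all $P_j$ in $D$ for generic pairs.
\end{enumerate}
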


The proof uses square-root densities. We first provide their topology
explicitly so that every use of Baire category has a Polish domain.

\subsubsection{A Polish space of positive densities}
For $\mu\in X$ let $(b_i^\mu)_{i\in\N}$ be its real clopen Haar basis.
The first function is $b_0^\mu=1$. For each finite binary string $s$ there
is a further basis function supported on its cylinder $[s]$, with values
\[
 \sqrt{\frac{\mu[s1]}{\mu[s0]\mu[s]}}\quad\hbox{on }[s0],
 \qquad
 -\sqrt{\frac{\mu[s0]}{\mu[s1]\mu[s]}}\quad\hbox{on }[s1].
\]
Enumerate the strings by increasing length. These functions form an
orthonormal basis of $L^2(\mu;\R)$: their finite initial spans include all
clopen simple functions. Their coefficients depend continuously on
finitely many positive cylinder masses.
For $a\in\ell_2$ write
$f_{\mu,a}=\sum_i a_i b_i^\mu$ in $L^2(\mu)$, and set
\[
 Z=\{(\mu,a)\in X\times\ell_2:\ \|a\|_2=1,\
                         f_{\mu,a}>0\ \mu\hbox{-almost everywhere}\}.
\]
We sometimes denote this point by $(\mu,f)$.
Define its source and target by
\[
 s(\mu,f)=\mu,\qquad t(\mu,f)=f^2\mu.
\]
In particular $s(z)\meq t(z)$.

\begin{lemma}\label{lem:density-bundle}
The space $Z$ is Polish. The maps $s,t:Z\to X$ are continuous open
surjections.
\end{lemma}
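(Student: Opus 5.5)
The plan is to show first that $Z$ is a $G_\delta$ subset of the Polish space $X\times\ell_2$. Then I prove continuity of $s$ and $t$ directly. Openness is handled by explicit local sections and perturbations.

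\emph{Polishness.} The map $\Phi:X\times\ell_2\to\ell_2$ is the first step. For each clopen $B$, the value $\langle f_{\mu,a},1_B\rangle_\mu=\sum_i a_i\int_B b_i^\mu\,d\mu$ is a finite sum, because $1_B$ lies in a finite initial span. Its coefficients depend continuously on finitely many cylinder masses, so $(\mu,a)\mapsto\int_B f_{\mu,a}\,d\mu$ is continuous.

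Positivity almost everywhere is equivalent to $\|f^-\|_{L^2(\mu)}=0$ together with $\mu(f=0)=0$. I would rewrite this using only clopen data. The condition $f\ge0$ a.e.\ is equivalent to $\int_B f\,d\mu\ge0$ for every clopen $B$, which is a closed condition.

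The condition $\mu(f=0)=0$ needs more care. I would express it by approximating $\mu(f\le 1/k)$ through conditional expectations onto the level-$n$ algebra. The martingale $E_n f$ is a finite-dimensional continuous function of $(\mu,a)$ via the first $2^n$ coordinates. This should present $\{\mu(f=0)=0\}$ as a countable intersection of open sets. If this is awkward, I would use the alternative description
\[
\mu(f=0)=0\iff\forall r\,\exists k\ \mu(f<1/k)<2^{-r},
\]
and handle $\mu(f<1/k)$ by a lower semicontinuity argument in the $L^2$ topology of the bundle. This step is where care is needed. The cleanest route is probably to pull back by the isometric trivialization
\[
(\mu,a)\mapsto(\mu,U_\mu f)\in X\times L^2(\lambda),
\]
where $U_\mu$ is the unitary induced by the Borel isomorphism transporting $\mu$ to Haar measure $\lambda$ along the tree of cylinder masses. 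The distribution function transported by this isomorphism varies continuously with $\mu$ on $X$ (atomless, full support). In that trivialization, $Z$ becomes $X\times\{g\in L^2(\lambda):g>0\text{ a.e.},\|g\|=1\}$. The set of a.e.\ positive unit vectors is $G_\delta$ in $L^2$, since $\{g:\lambda(g\le 1/k)<\epsilon\}$ behaves openly up to the standard argument.

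\emph{Continuity and surjectivity.} Continuity of $s$ is immediate. For $t$, the integrals $\int_B f^2\,d\mu$ over clopen $B$ are continuous in $(\mu,a)$: the map $a\mapsto f_{\mu,a}$ is an isometry, so $f\mapsto\int_B f^2$ is continuous uniformly in $\mu$ once the finite-span coefficients are. Surjectivity of $s$ comes from taking $a=e_0$. Surjectivity of $t$ comes from taking $(\nu,1)$.

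\emph{Openness.} For $s$, fix a basic neighbourhood $V\times N$ of $(\mu,a)$. For $\mu'$ near $\mu$, the point $(\mu',a)$ stays in $Z$, since in the trivialization positivity is preserved, and it lies in $V\times N$. So $s$ is open.

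For $t$, given $(\mu,f)$ and $\nu'$ near $\nu=f^2\mu$, I would take $\mu'=\mu$ and $f'=\sqrt{d\nu'/d\mu}$. The difficulty is that $\nu'$ need not be absolutely continuous with respect to $\mu$. Instead I would use the symmetry $(\mu,f)\mapsto(f^2\mu,1/f)$. This is a homeomorphism of $Z$ exchanging $s$ and $t$, provided I check its continuity, and that check is the main obstacle. It reduces to continuity of the transported trivialization and of the inversion $g\mapsto 1/g$ in the appropriate normalized sense. Given that homeomorphism, openness of $t$ follows from openness of $s$.
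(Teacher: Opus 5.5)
\textbf{The openness argument for $s$ fails.} You claim that $(\mu',a)\in Z$ for every $\mu'$ weakly close to $\mu$. This is false. The function $f_{\mu',a}=\sum_i a_ib_i^{\mu'}$ depends on the ratios $\mu'[s0]/\mu'[s]$ for \emph{every} string $s$, while a weak neighbourhood constrains only finitely many cylinder masses. Here is a counterexample.
\begin{itemize}
\item Let $\lambda$ be the fair coin measure and $s_k=1^k0$. Let $a$ be the normalized coefficient vector of $1+\sum_{k\ge1}\tfrac34\,2^{-|s_k|/2}\,b^\lambda_{s_k}$. Before normalization this function is $7/4$ on $[s_k0]$, $1/4$ on $[s_k1]$ and $1$ elsewhere, so $(\lambda,a)\in Z$.
\item Let $\mu'$ equal $\lambda$ off $[s_k]$, $\tfrac32\lambda$ on $[s_k0]$ and $\tfrac12\lambda$ on $[s_k1]$. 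Then $\mu'\in X$, and $\mu'$ agrees with $\lambda$ on every cylinder of length at most $|s_k|$. So for $k$ large it lies in any prescribed weak neighbourhood of $\lambda$.
\item Among basis functions with nonzero coefficient, only $b_{s_k}$ changes. On $[s_k1]$, a set of positive $\mu'$-measure, $f_{\mu',a}$ is a positive multiple of $1-\tfrac34\sqrt3<0$.
\end{itemize}
The trivialization does not rescue the claim. It sends $(\mu',a)$ to $(\mu',U_{\mu'}f_{\mu',a})$, not to $(\mu',U_\mu f_{\mu,a})$. A correct argument must move the vector along with $\mu'$. One option is to hold the transported vector fixed, but that needs the trivialization to be a homeomorphism over $X$, which you assert and do not check. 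The other is the paper's device. First approximate $f_{\mu,a}$ in $L^2(\mu)$ by a strictly positive clopen simple function $w$ whose point over $\mu$ lies in the given neighbourhood. Then use the section $\mu'\mapsto(\mu',w/\|w\|_{L^2(\mu')})$. It stays in $Z$ trivially and has finitely many nonzero Haar coefficients, each continuous in $\mu'$.

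\textbf{Continuity of the involution is missing.} For $t$ you use the same involution $I(\mu,f)=(f^2\mu,1/f)$ as the paper, but you leave its continuity unproved and call it the main obstacle. Your suggested reduction to continuity of $g\mapsto1/g$ does not work. That map is not continuous on the positive unit vectors of $L^2(\lambda)$; indeed $1/g$ need not lie in $L^2(\lambda)$ at all. Also, $I(\mu,f)$ lies over $f^2\mu$, so the trivialization involved is the one at $f^2\mu$, not at $\mu$.

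The missing step is a coefficient identity. With $\nu=f^2\mu$, the $j$th Haar coefficient of $1/f$ in $L^2(\nu)$ is $\int b_j^\nu f^{-1}\,d\nu=\int b_j^\nu f\,d\mu$.
\begin{itemize}
\item $b_j^\nu$ is a clopen simple function of fixed level, and its values depend continuously on finitely many cylinder masses of $\nu=t(\mu,f)$.
\item Integrals of $f_{\mu,a}$ over clopen sets are continuous.
\end{itemize}
Hence each coefficient is continuous. Unit vectors in $\ell_2$ that converge coordinatewise to a unit vector converge in norm. So $I$ is a continuous involution, hence a homeomorphism, and $t=s\circ I$ is open once $s$ is.

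\textbf{Polishness.} Your sketch via the trivialization is viable; the set $\{g:\lambda(g\le1/k)<\epsilon\}$ is indeed open in $L^2(\lambda)$. But it also rests on the unverified homeomorphism. The paper avoids this by proving directly that $(\mu,a)\mapsto\int\min(1,\max(0,kf_{\mu,a}))\,d\mu$ is continuous. It truncates the Haar expansion, with error at most $k\|a-a^{(N)}\|_2$.
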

\begin{proof}
Nonnegativity of $f_{\mu,a}$ is equivalent to
$\int_A f_{\mu,a}\,d\mu\ge0$ for every clopen set $A$.
Each of these integrals is continuous: $1_A$ belongs to a fixed finite
initial Haar span. Thus nonnegativity is a closed condition.
On this closed set strict positivity almost everywhere is equivalent to
\[
 \forall r\ge1\ \exists k\ge1:\quad
 \int_C \min(1,k f_{\mu,a})\,d\mu>1-\frac1r.
\]
The integral here is continuous. Use the globally $k$-Lipschitz function
\[
 \phi_k(u)=\min(1,\max(0,ku)).
\]
Truncate the Haar
expansion, and bound the integral of the error by
$k\|a-a^{(N)}\|_2$. The tail bound is locally uniform in the coefficient
vector. For a finite expansion the integral is a finite sum over
clopen cells, hence continuous. Strict positivity is therefore
$G_\delta$. Together with the closed norm-one condition this proves that
$Z$ is Polish.

The source map is continuous. To see that it is open, fix an open
neighborhood $W$ of $(\mu_0,f_0)$ in $Z$. Strictly positive clopen simple
functions, normalized in $L^2(\mu_0)$, approximate $f_0$ in $L^2(\mu_0)$.
Choose one, $w$, whose point over $\mu_0$ belongs to $W$. The section
\begin{equation}\label{eq:measure-star}
\mu\longmapsto
 \left(\mu,\frac{w}{\|w\|_{L^2(\mu)}}\right)
\end{equation}
is continuous in Haar coefficients: it has only finitely many nonzero
coefficients, all continuous in $\mu$. Consequently $s(W)$ contains
a neighborhood of $\mu_0$. This argument applies over every point of
$s(W)$.

For a continuous function $g$ on $C$, the integral
$\int g f_{\mu,a}^2\,d\mu$ is continuous. Finite Haar truncations prove
the assertion first for simple $f$, and the estimate
\[
 \left|\int g(f^2-h^2)\,d\mu\right|
 \le \|g\|_\infty\,\|f-h\|_2(\|f\|_2+\|h\|_2)
\]
proves the general assertion. This is continuity of $t$; its values lie
in $X$ because equivalence preserves atomlessness and full support.

Finally inversion is continuous. If $\nu=f^2\mu$, the inverse point is
$(\nu,1/f)$. Its $j$th Haar coefficient is
\[
 \int b_j^\nu\,\frac1f\,d\nu=\int b_j^\nu f\,d\mu.
\]
It is continuous by the already proved continuity of $t$ and of the
integrals against clopen simple functions. Coordinatewise convergence
of these coefficient vectors implies norm convergence, since their
norms and the norm of the limit are all one. Inversion is an
involution, so it is a homeomorphism. Since $t=s\circ I$, the target
map is open. Both maps are onto by taking $f=1$.
\end{proof}

\subsubsection{A bound on one output row}
Fix a Borel homomorphism $F$ as in Theorem~\ref{thm:measure-jump},
an output row $k$, and a nonempty open set $A\subseteq X$.
Write $F_k(\mu)\in\R^\N$ for the $k$th row.
For every $z\in Z$ there are $l,n\in\N$ such that
\begin{equation}\label{eq:measure-1}
\bigl(F_k(tz),F_l(sz)\bigr)\in L_n.
\end{equation}
These countably many Borel conditions cover the nonempty open space
$t^{-1}(A)$. Some fixed condition is comeager on a nonempty open set
$W\subseteq t^{-1}(A)$.

The source fiber square
\[
 W\times_s W=\{(z_1,z_2)\in W^2:s(z_1)=s(z_2)\}
\]
is Polish. Its projections onto $W$ are open surjections: the first
projection of $(U_1\times U_2)\cap(W\times_s W)$ is
$U_1\cap s^{-1}(s(U_2))$, which is open.
Thus on a comeager subset of this fiber square,
\begin{equation}\label{eq:measure-2}
\bigl(F_k(tz_1),F_k(tz_2)\bigr)\in L_{2n}.
\end{equation}
Choose a comeager set $D\subseteq X$ on which $F_k$ is continuous.
Openness of the projections and of $t$ lets us require additionally
$tz_1,tz_2\in D$. The resulting good set is still comeager, and in
particular dense in the fiber square.

\subsubsection{Closure and Hellinger distance}
Use the Hellinger and total variation conventions of Section~2. Concavity of the square root gives
\begin{equation}\label{eq:measure-3}
\Hel\left(\mu,\frac{\mu+\nu}{2}\right)^2
       \le \frac12 \Hel(\mu,\nu)^2.
\end{equation}

Choose a point of $W$ represented by a strictly positive clopen simple
function $w$ over $\mu_0$, normalized by $\int w^2\,d\mu_0=1$, and put
$\nu_0=w^2\mu_0$. Such points exist by the section approximation above.
There are an open neighborhood $U$ of $\mu_0$ and $\epsilon>0$ such that
\begin{equation}\label{eq:measure-4}
\rho\in U,\quad
 \left\|f-\frac{w}{\|w\|_{L^2(\rho)}}\right\|_{L^2(\rho)}<\epsilon
 \quad\Longrightarrow\quad(\rho,f)\in W
\end{equation}
for normalized strictly positive $f$.
The map
\[
 R_w(\sigma)=\frac{w^{-2}\sigma}{\int w^{-2}\,d\sigma}
\]
is weakly continuous and sends $\nu_0$ to $\mu_0$.
Choose a convex basic weak-open neighborhood $O$ of $\nu_0$, contained
in $A$, such that $R_w(O)\subseteq U$.

We claim that
\begin{equation}\label{eq:measure-5}
\mu_1,\mu_2\in O\cap D,\ \Hel(\mu_1,\mu_2)<\epsilon
 \quad\Longrightarrow\quad
       (F_k(\mu_1),F_k(\mu_2))\in L_{2n}.
\end{equation}
Set $\sigma=(\mu_1+\mu_2)/2$ and $\rho=R_w(\sigma)$.
Writing $w_\rho=w/\|w\|_{L^2(\rho)}$, the normalization gives
$w_\rho^2\rho=\sigma$.
If $\mu_1\meq\mu_2$, the normalized positive densities
$f_j=\sqrt{d\mu_j/d\rho}$ satisfy
\[
 \|f_j-w_\rho\|_{L^2(\rho)}
 =\Hel(\mu_j,\sigma)
 \le 2^{-1/2}\Hel(\mu_1,\mu_2)<\epsilon.
\]
Hence $(\rho,f_1),(\rho,f_2)$ belong to $W$.

For arbitrary $\mu_1,\mu_2$, replace $\mu_j$ by
$\mu_j^{(r)}=(1-r)\mu_j+r\sigma$, with $0<r<1$.
Both replacements are equivalent to $\sigma$, remain in $O$, and
converge weakly to $\mu_j$ as $r\downarrow0$.
By the same concavity argument,
$\Hel(\mu_j^{(r)},\sigma)\le \Hel(\mu_j,\sigma)$.
Thus they are targets of a point of $W\times_s W$.
Approximate each such point by points of the dense good set from \eqref{eq:measure-2},
with target errors tending to zero. The resulting target pairs lie in
$D^2$ and converge to $(\mu_1,\mu_2)$.
Relative continuity of $F_k$ on $D$ and closedness of $L_{2n}$ prove \eqref{eq:measure-5}.
No intersection of a weak comeager set with a fixed Hellinger ball
has been assumed.

\subsubsection{Interpolation and generic constancy}
The map
\[
 P_q:X^2\to X,\qquad P_q(\mu,\nu)=(1-q)\mu+q\nu
\]
is continuous and open for every $q\in[0,1]$.
At $q=0,1$ this is a projection. For $0<q<1$, take source
neighborhoods of $\mu,\nu$ specified by masses on a common finite
clopen partition. All cell masses are positive. Put
$\lambda=(1-q)\mu+q\nu$.
For $\lambda'\in X$ sufficiently close to $\lambda$ in these masses,
set, on every partition cell $B$,
\[
 \mu'(B)=\mu(B),\qquad
 \nu'(B)=\frac{\lambda'(B)-(1-q)\mu(B)}q>0.
\]
Within $B$, take $\mu'$ and $\nu'$ to be the corresponding constant
multiples of $\lambda'|B$. They are atomless full-support probabilities
in the prescribed neighborhoods, and
$(1-q)\mu'+q\nu'=\lambda'$. This proves openness.
The restriction $P_q:O^2\to O$ is also open and onto, by convexity and
the diagonal identity.

Choose an integer $m$ with $2/m<\epsilon^2$.
For $\mu,\nu\in O$ put $\mu_j=P_{j/m}(\mu,\nu)$.
Then
\[
 \Hel(\mu_j,\mu_{j+1})^2
 \le \|\mu_j-\mu_{j+1}\|_{\rm TV}
 \le 2/m<\epsilon^2.
\]
Openness implies that for comeager pairs $(\mu,\nu)\in O^2$, all
$\mu_0,\ldots,\mu_m$ belong to $D$.
Apply \eqref{eq:measure-5} to consecutive points and add the bounds to obtain
\[
 (F_k(\mu),F_k(\nu))\in L_{2mn}
\]
for comeager pairs. Kuratowski--Ulam now implies that $F_k$ is constant
modulo $\ell_\infty$ on a comeager subset of $O$.
This proves the local row property required by Lemma~\ref{lem:rows},
since $k$ and $A$ were arbitrary.

Measure equivalence on $X$ is generically ergodic.
Indeed, the additive Polish group $C(C,\R)$ acts continuously by
\[
 (h,\mu)\longmapsto \frac{e^h\mu}{\int e^h\,d\mu}.
\]
Its orbits are contained in $\meq$-classes and are weakly dense:
a strictly positive clopen simple density can match the masses of
any finite clopen partition. The criterion in the preliminaries applies.
Lemma~\ref{lem:rows} therefore gives generic constancy of $F$ modulo
$\ell_\infty^+$.

Every $\meq$-class in $X$ is meager. Fix $\mu\in X$.
Probabilities singular to $\mu$ form a dense $G_\delta$ subset of
$\Prob(C)$. The $G_\delta$ description follows from the clopen
witnesses
\[
 \forall r\ge1\ \exists B\text{ clopen}:\quad
       \mu(B)<2^{-r},\qquad \nu(B)>1-2^{-r}.
\]
These witnesses characterize singularity: a summable sequence of such
errors gives a $\mu$-null, $\nu$-conull liminf set; the converse follows
by regularity and clopen approximation.
Density follows because finite atomic probabilities are dense and are
singular to the atomless measure $\mu$.
Intersecting with the dense $G_\delta$ space $X$ gives a comeager subset
of $X$ disjoint from the $\meq$-class of $\mu$.
A reduction cannot map a comeager set into one target class, because
its preimage would lie in one meager source class.
This finishes the proof of Theorem~\ref{thm:measure-jump}.

\section{Unitary classification and measure equivalence}\label{sec:operators}
\subsection{The compact group and its two subgroups}\label{sec:groups}
For the rest of the proof, $K$ denotes the following compact group,
rather than the product of circles used for the sequence-space obstruction.
Let
\[
 L=\widehat{\Q_{\mathrm{discrete}}}
   =\{\chi\in\T^\Q:\chi(r+s)=\chi(r)+\chi(s)\},
 \qquad K=L^\N.
\]
Thus $L$ is a closed subgroup of a countable product of circles and is
compact metrizable. As before, $\T=\R/\Z$ is written additively. Thus a character here
is a homomorphism into the additive circle. Define
\[
 \iota(t)(r)=rt+\Z,\qquad
 j:\R^\N\longrightarrow K,\quad j(x)_n=\iota(x_n).
\]
The map $j$ is a continuous injective homomorphism. Injectivity follows
because $rt\in\Z$ for every rational $r$ forces $t=0$. Its image is
Borel and $j$ has a Borel inverse on that image. No claim that it is
a topological embedding is needed.

Put
\begin{gather*}
 B_0=\left\{h\in\ell_1(\R):\sum_n h_n=0\right\},
 \qquad H=j(B_0),\qquad G=j(\ell_1(\R)),\\
 D=j\left(\left\{d\in\Q^{(\N)}:\sum_n d_n=0\right\}\right).
\end{gather*}
Here $\Q^{(\N)}$ denotes the finitely supported rational sequences.
Both $G$ and $H$ are Borel: their Banach-space domains map continuously
and injectively into $K$. Give $H$ the Polish group topology transported
from the $\ell_1$ norm on the closed Banach subspace $B_0$.
Its inclusion in $K$ is continuous and $D$ is countable and dense for
this finer topology. Density follows by finite truncation, correcting
the truncated sum in one coordinate, and rational approximation with
one final rational correction.

Index coordinates by $\N=\{0,1,2,\ldots\}$. For $a\in\R$ and $n\ge1$,
set
\begin{equation}\label{eq:vn}
 v_n(a)=j(ae_0-ae_n),\qquad g(a)=j(ae_0).
\end{equation}
The facts used throughout the proof are
\begin{equation}\label{eq:group-facts}
 v_n(a)\in H,\quad \|v_n(a)\|_H=2|a|,\quad
 v_n(a)\longrightarrow g(a)\text{ in }K,\quad
 mg(a/m)=g(a).
\end{equation}
Moreover $G=H+g(\R)$ and $g(1)\notin H$.

\subsection{A rigidity property of quasi-invariance groups}
\label{sec:measure-obstruction}

Throughout this subsection, $K$, $j$, $H$, $G$, and $D$ are as defined
above. Thus
\[
 B_0=\left\{h\in\ell_1(\R):\sum_{n=0}^{\infty}h_n=0\right\},
 \qquad H=j(B_0),\qquad G=j(\ell_1(\R)),
\]
and $D$ is the image under $j$ of the finitely supported rational
sequences of sum zero. We give $B_0$ its $\ell_1$ norm and identify $H$
with $B_0$ when discussing this finer Polish group topology. In
particular, $D$ is dense in $H$ for this topology. Recall also that
\[
 v_n(a)=j(a(e_0-e_n))\longrightarrow g(a)=j(ae_0)
 \quad\text{in }K\qquad(n\longrightarrow\infty),
\]
where $n\geq1$ and the $B_0$ norm of $a(e_0-e_n)$ is $2|a|$.

If $K$ acts continuously on a compact metrizable space $Z$ and
$\mu\in\Prob(Z)$, write
\[
 k\mu=(z\mapsto k\cdot z)_*\mu,
 \qquad
 \QI(\mu)=\{k\in K:k\mu\text{ is equivalent to }\mu\}.
\]
The quasi-invariance group is a subgroup of $K$. No singularity
assumption is imposed on translates outside this subgroup.

\begin{theorem}\label{thm:rigidity}
For every continuous action of $K$ on a compact metrizable space $Z$
and every $\mu\in\Prob(Z)$,
\[
 H\subseteq\QI(\mu)\quad\Longrightarrow\quad G\subseteq\QI(\mu).
\]
\end{theorem}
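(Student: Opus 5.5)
The plan is to extract a continuous unitary representation of $H$ from the quasi-invariance, and then pass to the limit $v_n(a)\to g(a)$ in $K$. The norm bound in \eqref{eq:group-facts} controls the Hellinger distances along this sequence even though the sequence does not converge in $H$.

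\emph{Step 1: continuity in $H$.} Assume $H\subseteq\QI(\mu)$. For $h\in H$ put $a_h=\sqrt{d(h\mu)/d\mu}\in L^2(\mu)$, and define the Koopman-type unitary representation
\[
\pi(h)f=a_h\cdot(f\circ h^{-1})
\]
of $H$ on $L^2(\mu)$. The map $h\mapsto\langle\pi(h)f,f'\rangle$ is Borel for the Borel structure $H$ inherits from $K$. Give $H$ its $\ell_1$ Polish topology, whose Borel structure agrees with the inherited one by Lusin--Souslin. Then $\pi$ is a Borel homomorphism of Polish groups into the unitary group, so automatic continuity makes it strongly continuous. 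Since $a_h=\pi(h)1$, the map $h\mapsto a_h$ is continuous into $L^2(\mu)$. Consequently
\[
\varepsilon(r)=\sup\{\Hel(h\mu,\mu):\|h\|_H\le r\}\longrightarrow0
\qquad(r\to0).
\]
This is the same mechanism as in Proposition~\ref{prop:Hilbert-metric}.

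\emph{Step 2: passing to the weak limit.} Fix $c\in\R$. By \eqref{eq:group-facts}, $\|v_n(c)\|_H=2|c|$ and $v_n(c)\to g(c)$ in $K$. Continuity of the action then gives $v_n(c)\mu\to g(c)\mu$ weakly. The Hellinger affinity has the variational form
\[
\int\sqrt{d\alpha\,d\beta}=\inf_{f>0\text{ continuous}}\tfrac12\Bigl(\int f\,d\alpha+\int f^{-1}\,d\beta\Bigr).
\]
This is an infimum of weakly continuous functionals, so the affinity is weakly upper semicontinuous and $\Hel$ is lower semicontinuous. Hence $\Hel(g(c)\mu,\mu)\le\varepsilon(2|c|)$. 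In the Lebesgue decomposition $g(c)\mu=\rho_c+\sigma_c$ with $\rho_c\ll\mu$ and $\sigma_c\perp\mu$, this bounds the singular mass: $\sigma_c(Z)\le\Hel(g(c)\mu,\mu)^2\le\varepsilon(2|c|)^2$. The same estimate holds with $\mu$ replaced by $h\mu$ for any $h\in H$, and, since $K$ is abelian, for the translates $g(c)+h$.

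\emph{Step 3: upgrading small singular mass to absolute continuity (main obstacle).} Small Hellinger distance does not by itself give $g(c)\mu\ll\mu$, and singular masses are not obviously subadditive along $g(a)=m\,g(a/m)$. My first attempt would exploit the fact that $t\mapsto g(t)$ is a continuous homomorphism $\R\to K$ commuting with $H$. Consider the set $Q=\{t\in\R:g(t)\in\QI(\mu)\}$, which is a subgroup of $\R$. I would aim to show that it contains a neighbourhood of $0$, which forces $Q=\R$.

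To obtain that neighbourhood, I would apply Step~2 not to $\mu$ but to the singular part itself. Let $S\subseteq Z$ be a $\mu$-null set carrying $\sigma_c$. If $\sigma_c\neq0$, then the normalized restriction of $g(c)\mu$ to $S$ is again a measure whose $H$-translates are equivalent to $g(c)\mu|_S$ (intersected with the $H$-saturation modulo null sets). Iterating the Step~2 estimate should then force the singular mass to be at most $\varepsilon(2|c|)^2$ times itself, hence zero once $|c|$ is small.

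If this self-improvement turns out not to close, the fallback is to replace $\mu$ by an equivalent measure on which $H$ acts ergodically, using the ergodic decomposition of the nonsingular $H$-action (via the dense countable subgroup $D$). On an ergodic component, the set where $d(g(c)\mu)/d\mu$ vanishes, or where the singular part lives, is $H$-invariant modulo null sets because $g(c)$ commutes with $H$. That set must therefore be null or conull, and the bound $\sigma_c(Z)\le\varepsilon(2|c|)^2<1$ excludes the conull case. Finally, $G=H+g(\R)$ then lies in $\QI(\mu)$.
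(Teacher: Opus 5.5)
Your Steps~1 and~2 are sound and agree with the paper. Step~1 is automatic continuity of the Koopman representation, as in Lemma~\ref{lem:koopman-continuity}, though you still need a jointly Borel version of $d(h\mu)/d\mu$. Step~2 is upper semicontinuity of affinity, Lemma~\ref{lem:affinity-usc}. The gap is Step~3, which you leave conditional, and neither of your routes closes.

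\emph{Self-improvement.} The singular part of $g(c)\mu$ is $g(c)(\mu|_E)$ for a set $E$ that is $H$-invariant modulo $\mu$, with $\mu(E)=s=\sigma_c(Z)$. The normalized measure $\mu'=\mu|_E/s$ does satisfy $H\subseteq\QI(\mu')$. However, its Hellinger modulus is only bounded by $\varepsilon(r)/\sqrt s$, and $g(c)\mu'\perp\mu'$. Step~2 therefore gives $2\le\varepsilon(2|c|)^2/s$, which is again just $s\le\varepsilon(2|c|)^2/2$. There is no factor $s$ on the right, so nothing iterates.

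\emph{Fallback.} Ergodicity is a property of a measure class, so $\mu$ cannot be replaced by an equivalent ergodic measure; you must pass to the components $\nu_y$ of the $D$-ergodic decomposition. There the dichotomy ($g(c)\nu_y$ either equivalent or singular to $\nu_y$) is correct. But your bound $\sigma_c(Z)\le\varepsilon(2|c|)^2$ concerns $\mu$, not $\nu_y$. Since $g(c)$ is not known to preserve the component quotient, a lower bound on $\Aff(\mu,g(c)\mu)$ cannot exclude that $g(c)$ carries components onto other components. Joint concavity only gives $\Aff(\mu,g(c)\mu)\ge\int\Aff(\nu_y,g(c)\nu_y)\,d\eta(y)$, which is the wrong direction.

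The paper closes this with two ingredients your proposal lacks. First, the $D$-ergodic quotient map $q$ is $H$-invariant modulo $\mu$ (Lemma~\ref{lem:fixed-quotient}). Bounded $D$-invariant functions give multiplication operators commuting with every $U_d$, hence with every $U_h$, by density of $D$ in $H$ and your Step~1 continuity. So for $h\in H$ the translate $h\mu$ disintegrates over the same quotient, and $\Aff(\mu,h\mu)=\int\Aff(\nu_y,h\nu_y)\,d\eta$. Apply this to $v_n(a)\in H$, which do preserve the quotient. Then pass to the limit inside each fibre by upper semicontinuity and reverse Fatou, giving $\int\Aff(\nu_y,g(a)\nu_y)\,d\eta\ge1-\tfrac12\varepsilon(2|a|)^2$. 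With the dichotomy this only yields $\eta(B_a)\le\tfrac12\varepsilon(2|a|)^2$ for $B_a=\{y:g(a)\nu_y\perp\nu_y\}$, which need not be zero.

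Second, and this is the key missing idea: $\QI(\nu_y)$ is a subgroup and $m\,g(a/m)=g(a)$, so $B_a\subseteq B_{a/m}$. Hence $\eta(B_a)\le\tfrac12\varepsilon(2|a|/m)^2\to0$. Your idea of a subgroup $Q\subseteq\R$ containing a neighbourhood of $0$ is the right instinct, but it must be run fibrewise with a bound uniform in $y$, and that is exactly what the roots inclusion supplies. Integrating then gives $g(a)\mu\sim\mu$, and $G=H+g(\R)$ finishes the proof.
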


We first establish the measure-theoretic ingredients of the proof.

\subsubsection{Affinity and conditional measures}

For two probability measures $\rho,\sigma$ on a standard Borel space,
their Hellinger affinity is
\[
 \Aff(\rho,\sigma)
 =\int\sqrt{\frac{d\rho}{d\lambda}
                  \frac{d\sigma}{d\lambda}}\,d\lambda,
\]
where $\lambda$ is any finite measure dominating both. The expression
is independent of $\lambda$, takes values in $[0,1]$, and is zero
exactly when $\rho$ and $\sigma$ are mutually singular. With the
normalization of Hellinger distance used above,
$\Hel(\rho,\sigma)^2=2-2\Aff(\rho,\sigma)$.

\begin{lemma}\label{lem:affinity-usc}
If $Z$ is compact metrizable, then
\begin{equation}\label{eq:affinity-variational}
 \Aff(\rho,\sigma)
 =\inf_{\substack{f\in C(Z)\\ f>0}}
       \frac12\left(\int f\,d\rho+\int f^{-1}\,d\sigma\right).
\end{equation}
Consequently affinity is jointly upper semicontinuous, and hence
Borel, on $\Prob(Z)^2$ with its weak topology.
\end{lemma}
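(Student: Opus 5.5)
The plan is to prove \eqref{eq:affinity-variational} by establishing the two inequalities separately, and then to deduce semicontinuity formally. Fix a dominating probability $\lambda$, for instance $(\rho+\sigma)/2$, and write $\rho=a\lambda$, $\sigma=b\lambda$ with Borel densities $a,b\ge0$.

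\emph{Lower bound.} Let $f\in C(Z)$ with $f>0$. Pointwise, the arithmetic--geometric mean inequality gives
\[
 \tfrac12\bigl(fa+f^{-1}b\bigr)\ge\sqrt{ab}.
\]
Integrating against $\lambda$ shows that the right side of \eqref{eq:affinity-variational} is at least $\Aff(\rho,\sigma)$. This direction is immediate.

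\emph{Upper bound.} Here I expect the main obstacle. Equality holds pointwise at $f=\sqrt{b/a}$, but this function is only Borel, it may vanish or be infinite, and it must be replaced by a continuous, strictly positive, bounded function. I would proceed in three steps.
\begin{enumerate}
\item Take a small $\delta>0$ and put $f_\delta=\sqrt{(b+\delta)/(a+\delta)}$. This is Borel and takes values in $[\sqrt{\delta/(1+\delta)}\,\cdot\,\text{const},\,\ldots]$; in fact, since $a+b=2$ for the choice $\lambda=(\rho+\sigma)/2$, both $a$ and $b$ are bounded by $2$, so $f_\delta$ lies in a compact interval $[c_\delta,c_\delta^{-1}]$ with $c_\delta>0$.
\item Compute
\[
 \tfrac12\int\bigl(f_\delta a+f_\delta^{-1}b\bigr)\,d\lambda .
\]
This integral tends to $\int\sqrt{ab}\,d\lambda$ as $\delta\to0$, by dominated convergence. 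The integrand is dominated by $\tfrac12\bigl(\sqrt{(b+\delta)(a+\delta)}\cdot2\bigr)$, which is bounded.
\item Approximate the bounded Borel function $f_\delta$ by continuous functions $g$ with values in $[c_\delta,c_\delta^{-1}]$, converging in $L^1(\lambda)$. Lusin's theorem together with Tietze extension, clipped to the interval, does this. Since $u\mapsto u^{-1}$ is Lipschitz on that interval, $g^{-1}\to f_\delta^{-1}$ in $L^1(\lambda)$ as well, and $a,b$ are bounded. Hence the functional evaluated at $g$ converges to its value at $f_\delta$.
\end{enumerate}
Combining these steps yields the infimum bound from above.

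\emph{Semicontinuity.} For each fixed positive continuous $f$, the map
\[
 (\rho,\sigma)\mapsto\tfrac12\Bigl(\int f\,d\rho+\int f^{-1}\,d\sigma\Bigr)
\]
is weakly continuous on $\Prob(Z)^2$, because both $f$ and $f^{-1}$ are continuous on the compact space $Z$. An infimum of continuous functions is upper semicontinuous. Upper semicontinuous real functions are Borel, since each set $\{\Aff<t\}$ is open.

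One could restrict the infimum to a countable dense family of such $f$, but this is not needed for the argument. I will also note that independence of the affinity from $\lambda$ is standard and was already assumed in its definition, so no separate check is required.
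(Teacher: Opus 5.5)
Your proposal is correct and follows essentially the same route as the paper: the pointwise AM--GM lower bound, the regularized densities $f_\delta=\sqrt{(b+\delta)/(a+\delta)}$ with dominated convergence, $L^1(\lambda)$ approximation by continuous functions clipped to a compact interval with Lipschitz inversion, and upper semicontinuity as an infimum of weakly continuous functionals. The only differences are cosmetic: you normalize $\lambda=(\rho+\sigma)/2$ (so $a+b=2$) where the paper takes $\lambda=\rho+\sigma$ (so $p+q=1$), and you invoke Lusin plus Tietze where the paper cites regularity of $\lambda$.
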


\begin{proof}
Set $\lambda=\rho+\sigma$, $p=d\rho/d\lambda$, and
$q=d\sigma/d\lambda$, choosing Borel versions with $p,q\geq0$ and
$p+q=1$ almost everywhere. The arithmetic--geometric mean inequality
gives
\[
 \sqrt{pq}\leq\frac12(fp+f^{-1}q)
\]
for every positive measurable $f$. This proves one inequality in
\eqref{eq:affinity-variational}.

For $0<\delta\leq1$, let
\[
 f_\delta=\sqrt{\frac{q+\delta}{p+\delta}}.
\]
Its values lie in a compact interval $[c_\delta,C_\delta]\subset(0,\infty)$.
Moreover,
\[
 \frac12(pf_\delta+qf_\delta^{-1})\longrightarrow\sqrt{pq}
 \quad\text{almost everywhere as }\delta\downarrow0.
\]
Both summands are bounded by $\sqrt2$: for example,
$pf_\delta\leq\sqrt{p(q+\delta)}\leq\sqrt2$.
Dominated convergence therefore applies. For each fixed $\delta$,
regularity of the finite Borel measure $\lambda$ gives continuous
functions converging to $f_\delta$ in $L^1(\lambda)$. Clipping these
functions to $[c_\delta,C_\delta]$ preserves continuity and convergence.
Inversion is Lipschitz on this interval, so their reciprocals also
converge in $L^1(\lambda)$. Thus the infimum over positive continuous
functions is at most
$\frac12\int(pf_\delta+qf_\delta^{-1})\,d\lambda$.
Letting $\delta\downarrow0$ proves the formula. Each expression in its
infimum is weakly continuous in $(\rho,\sigma)$, proving upper
semicontinuity.
\end{proof}

\begin{lemma}\label{lem:affinity-disintegration}
Let $q:Z\to Y$ be a Borel map between standard Borel spaces. Suppose
$\rho,\sigma\in\Prob(Z)$ have the same marginal $\eta$ under $q$, and
write their disintegrations as
\[
 \rho=\int_Y\rho_y\,d\eta(y),\qquad
 \sigma=\int_Y\sigma_y\,d\eta(y),
\]
where the conditional probabilities are concentrated on $q^{-1}\{y\}$
for almost every $y$. Then
\begin{equation}\label{eq:conditional-affinity}
 \Aff(\rho,\sigma)
   =\int_Y\Aff(\rho_y,\sigma_y)\,d\eta(y).
\end{equation}
Furthermore, $\rho$ and $\sigma$ are equivalent if and only if
$\rho_y$ and $\sigma_y$ are equivalent for $\eta$-almost every $y$.
\end{lemma}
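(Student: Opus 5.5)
The plan is to reduce everything to a single joint density on $Z$ and then read off the conditional densities. Put $\lambda=\frac12(\rho+\sigma)$. This probability also has marginal $\eta$, and we disintegrate it as $\lambda=\int_Y\lambda_y\,d\eta(y)$. Choose Borel versions $p=d\rho/d\lambda$ and $q=d\sigma/d\lambda$ with $p,q\ge0$ and $p+q=2$.

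\textbf{Step 1: conditional densities.} We first show that for $\eta$-almost every $y$,
\[
 \rho_y=p\,\lambda_y\qquad\text{and}\qquad \sigma_y=q\,\lambda_y .
\]
For every Borel $A\subseteq Y$ and every $B$ in a countable generating $\pi$-system of $Z$ containing $Z$, we have
\[
 \int_A(p\lambda_y)(B)\,d\eta=\rho(q^{-1}A\cap B)=\int_A\rho_y(B)\,d\eta .
\]
This uses only that $\lambda_y$ and $\rho_y$ are concentrated on the fibre $q^{-1}\{y\}$ for almost every $y$. Uniqueness of densities on $Y$ then gives, for almost every $y$, agreement on the countable system. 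Taking $B=Z$ shows that $p\lambda_y$ is a probability, and the $\pi$-$\lambda$ theorem gives equality of measures. This is the same argument as in Lemma~\ref{lem:fibers}. A convenient alternative is to apply Lemma~\ref{lem:fibers} directly to the graph embedding $z\mapsto(q(z),z)$, which transports the disintegrations to kernels on $Y\times Z$.

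\textbf{Step 2: the affinity formula.} By independence of the dominating measure, $\Aff(\rho_y,\sigma_y)=\int\sqrt{pq}\,d\lambda_y$ for almost every $y$. Integrating against $\eta$ and using $\lambda=\int\lambda_y\,d\eta$, which holds for nonnegative Borel integrands by the monotone class theorem, we get
\[
 \int_Y\Aff(\rho_y,\sigma_y)\,d\eta(y)=\int_Z\sqrt{pq}\,d\lambda=\Aff(\rho,\sigma).
\]
Measurability of $y\mapsto\Aff(\rho_y,\sigma_y)$ comes from this same representation, so Lemma~\ref{lem:affinity-usc} is not needed here.

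\textbf{Step 3: the equivalence criterion.} We have $\rho\ll\sigma$ if and only if $\lambda(p>0,\ q=0)=0$. By disintegration, $\lambda(p>0,\ q=0)=\int_Y\lambda_y(p>0,\ q=0)\,d\eta(y)$. This vanishes exactly when $\lambda_y(p>0,\ q=0)=0$ for almost every $y$, and by Step~1 that is the condition $\rho_y\ll\sigma_y$ for almost every $y$. The symmetric argument handles $\sigma\ll\rho$, and intersecting the two conull sets proves the criterion.

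\textbf{Main obstacle.} The only delicate point is Step~1. There we must make sure the fibrewise identity holds simultaneously for all Borel $B$ on one conull set of $y$, despite an uncountable family of sets. The countable generating $\pi$-system handles this. We also need the fibre concentration so that $q^{-1}A$ can be pulled inside the conditional measures.
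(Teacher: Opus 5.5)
Your proof is correct and follows the paper's argument essentially step for step. The paper also takes $\lambda=(\rho+\sigma)/2$, identifies the fibrewise densities by the uniqueness argument over a countable generating family with $B=Z$ included, integrates $\sqrt{pq}$ by Fubini, and disintegrates $\lambda(\{p>0,\ q=0\})$ for the equivalence criterion. The only blemish is notational: you use $q$ both for the map $Z\to Y$ and for $d\sigma/d\lambda$, so rename one of them.
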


\begin{proof}
Let $\lambda=(\rho+\sigma)/2$ and
$\lambda_y=(\rho_y+\sigma_y)/2$. These are a measure and its
disintegration over $q$, with marginal $\eta$. Put
$r=d\rho/d\lambda$ and $s=d\sigma/d\lambda$.
For almost every $y$, the restrictions of $r$ and $s$ are respectively
the densities of $\rho_y$ and $\sigma_y$ relative to $\lambda_y$.
Here is the usual uniqueness argument: for every Borel $C\subseteq Y$
and every member $B$ of a countable generating algebra on $Z$,
\[
 \int_C\int_B r\,d\lambda_y\,d\eta(y)
 =\rho(B\cap q^{-1}(C))
 =\int_C\rho_y(B)\,d\eta(y).
\]
Include $B=Z$, discard one null set for the countable algebra, and
use uniqueness of finite measures to obtain the assertion for all
Borel $B$. The same argument applies to $s$.
Fubini's theorem applied to $\sqrt{rs}$ now proves
\eqref{eq:conditional-affinity}.

For the last assertion, $\rho\ll\sigma$ is equivalent to
$\lambda(\{r>0,\ s=0\})=0$. Disintegrating this nonnegative quantity
shows that it is equivalent to $\rho_y\ll\sigma_y$ almost everywhere.
Apply the same argument in the other direction.
\end{proof}

\begin{lemma}\label{lem:ergodic-dichotomy}
Two ergodic quasi-invariant probability measures for the same Borel
action of a countable group are either equivalent or mutually singular.
They need not have the same Radon--Nikodym cocycle.
\end{lemma}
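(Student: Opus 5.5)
The plan is to split each measure into mutually absolutely continuous and singular parts, and to show that one of these parts must vanish by ergodicity. Let $\Gamma$ act by Borel transformations on $Z$, and let $\mu,\nu$ be ergodic quasi-invariant probabilities. By Lebesgue decomposition, choose a Borel set $S$ with $\mu(S)=0$ such that $\nu|_{Z\setminus S}\ll\mu$ and $\nu|_S\perp\mu$. Concretely, set $\lambda=(\mu+\nu)/2$, take Borel versions $p=d\mu/d\lambda$ and $q=d\nu/d\lambda$, and put $A=\{p>0,\,q>0\}$.

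The key step is to show that $A$ is invariant modulo null sets. For $\gamma\in\Gamma$, quasi-invariance of both $\mu$ and $\nu$ makes $\lambda$ quasi-invariant. The densities then transform by the formula
\[
 \frac{d\mu}{d\lambda}(\gamma^{-1}z)\cdot\frac{d\gamma_*\lambda}{d\lambda}(z)=\frac{d\gamma_*\mu}{d\lambda}(z),
\]
and $\gamma_*\mu\meq\mu$. Hence the set $\{p>0\}$, which is the support class of $\mu$ inside $\lambda$, satisfies $\gamma\{p>0\}=\{p>0\}$ modulo $\lambda$-null sets. The same holds for $\{q>0\}$, and therefore for $A$. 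I would phrase this without densities: $\{p>0\}$ is the essentially unique set, up to $\lambda$-null sets, whose measure classes satisfy $\lambda|_{\{p>0\}}\meq\mu$. Since $\gamma_*$ maps the class of $\mu$ to itself and preserves the class of $\lambda$, it preserves this set modulo null sets.

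Ergodicity then finishes the argument. Because $A$ is invariant modulo null sets, the definition of ergodicity in the preliminaries gives $\mu(A)\in\{0,1\}$ and $\nu(A)\in\{0,1\}$. In addition, $\mu(A)=0$ iff $\lambda(A)=0$ iff $\nu(A)=0$: since $q>0$ on $A$, a $\lambda$-positive subset of $A$ has positive measure for $\mu$ and for $\nu$. There are two cases.
\begin{itemize}
\item If $\mu(A)=0=\nu(A)$, then $\mu$ is concentrated on $\{q=0\}$ and $\nu$ on $\{p=0\}$, so the two measures are mutually singular.
\item If $\mu(A)=1=\nu(A)$, then on $A$ both densities are positive. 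Hence $\mu\meq\lambda|_A\meq\nu$.
\end{itemize}
For the final remark, that the Radon--Nikodym cocycles can differ, an example suffices. Take $\Gamma=\Z$ acting on a finite set by a transitive permutation, or on $\Z$ by translation with two different weighted measures, say geometric weights $2^{-|n|}$ and $3^{-|n|}$ after normalization. These are equivalent and ergodic (the action is transitive), but their cocycles $d\gamma_*\mu/d\mu$ differ.

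The main obstacle is the invariance of $A$ modulo null sets, where quasi-invariance must be used carefully and without a simultaneous pointwise choice. This is harmless here because $\Gamma$ is countable: one intersects the countably many conull sets obtained for the individual $\gamma$. If the invariance notion in the ergodicity definition requires strict invariance, I would replace $A$ by $\bigcap_{\gamma}\gamma A$, which differs from $A$ by a null set.
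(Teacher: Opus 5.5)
Your proof is correct and is essentially the paper's argument: both take $\lambda=(\mu+\nu)/2$, observe that it is quasi-invariant, show that the positive-density sets are invariant modulo $\lambda$-null sets, and conclude by ergodicity. The difference is only cosmetic: you apply both ergodicity hypotheses to the single set $\{p>0,\,q>0\}$, while the paper applies ergodicity of each measure to the other's positive-density set. Your transitive-action example also justifies the remark about differing cocycles, which the paper only asserts.
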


\begin{proof}
Let the measures be $\rho,\sigma$ and set
$\lambda=(\rho+\sigma)/2$. This measure is also quasi-invariant.
Let $A$ and $B$ be the positive-density sets of $\rho$ and $\sigma$
relative to $\lambda$. Both are invariant modulo $\lambda$: a group
element sends the essential support of either measure to the
essential support of its translate, which has the same measure class.

If $\lambda(A\cap B)=0$, the measures are singular. Otherwise both
$\rho(A\cap B)$ and $\sigma(A\cap B)$ are positive. Ergodicity of
$\rho$ applied to $B$ gives $\rho(B)=1$, and ergodicity of $\sigma$
applied to $A$ gives $\sigma(A)=1$. Hence $A=B$ modulo $\lambda$,
and the two measures are equivalent.
\end{proof}

\subsubsection{Continuity and the invariant quotient}

\begin{lemma}\label{lem:koopman-continuity}
Suppose $H\subseteq\QI(\mu)$. The nonsingular Koopman representation
of $B_0$ on $L^2(Z,\mu)$,
\[
 (U_h\xi)(z)
   =\sqrt{r_h(z)}\,\xi(j(-h)\cdot z),
 \qquad r_h=\frac{d(j(h)\mu)}{d\mu},
\]
is strongly continuous for the $\ell_1$ topology on $B_0$.
In particular,
\[
 \Aff(\mu,j(h)\mu)=\langle 1,U_h1\rangle\longrightarrow1
 \quad\text{as }\|h\|_1\longrightarrow0.
\]
\end{lemma}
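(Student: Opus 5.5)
The plan is to realize $h\mapsto U_h$ as a Borel homomorphism from the Polish group $(B_0,\|\cdot\|_1)$ into the unitary group $\mathcal U(L^2(Z,\mu))$ with its strong topology. That group is Polish because $L^2(Z,\mu)$ is separable, as noted in the proof of Proposition~\ref{prop:Hilbert-metric}. Automatic continuity of Borel homomorphisms between Polish groups \cite[Theorem~2.3.3]{Gao} then gives strong continuity. The final assertion is immediate from the formula: $\langle 1,U_h1\rangle=\int\sqrt{r_h}\,d\mu=\Aff(\mu,j(h)\mu)$, and $\langle1,U_h1\rangle\to\langle1,1\rangle=1$ by strong continuity.

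\textbf{Step 1: algebra.} Each $U_h$ is defined because $j(h)\in H\subseteq\QI(\mu)$, so $r_h$ exists and is positive $\mu$-almost everywhere. A change of variables shows that $U_h$ is isometric, and $U_{-h}$ is its inverse, so $U_h$ is unitary. The chain rule for Radon--Nikodym derivatives gives the cocycle identity
\[
r_{h+k}(z)=r_h(z)\,r_k(j(-h)\cdot z)
\]
almost everywhere, and this yields $U_{h+k}=U_hU_k$. These verifications are routine.

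\textbf{Step 2: Borel measurability.} It suffices to show that $h\mapsto\langle\xi,U_h\eta\rangle$ is Borel for $\xi,\eta$ ranging over a countable set whose span is dense. The Borel structure of the strong (equivalently weak) topology on $\mathcal U$ is generated by such coefficients, using uniform boundedness of unitaries. Take $\xi,\eta$ to be strictly positive continuous functions from a countable family whose differences are uniformly dense in $C(Z)$, hence dense in $L^2(\mu)$. For such functions, a direct computation with densities gives
\[
\langle\xi,U_h\eta\rangle=\int\sqrt{\frac{d(\xi^2\mu)}{d\lambda}\,\frac{d\bigl(j(h)(\eta^2\mu)\bigr)}{d\lambda}}\;d\lambda ,
\]
for any dominating $\lambda$. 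This is exactly $\Aff\bigl(\xi^2\mu,\;j(h)(\eta^2\mu)\bigr)$, up to the normalizing constants $\|\xi\|_2\|\eta\|_2$ once both measures are normalized to probabilities. The map $h\mapsto j(h)(\eta^2\mu)$ is weakly continuous from $B_0$ into $\Prob(Z)$: $j$ is continuous into $K$, the $\ell_1$ topology refines the one induced from $K$, and the action is continuous, so dominated convergence applies as in Proposition~\ref{prop:averaging}. Affinity is Borel on $\Prob(Z)^2$ by Lemma~\ref{lem:affinity-usc}. Hence each such coefficient is Borel in $h$.

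\textbf{Main obstacle.} The delicate point is Step~2: keeping the identification of the matrix coefficient with an affinity honest. The density of $j(h)(\eta^2\mu)$ with respect to $\mu$ is $r_h\cdot\eta^2(j(-h)\cdot z)$, and the square root of this must reproduce exactly the integrand of $U_h\eta$. The second potential snag is passing from a dense set of coefficients to Borel measurability into $\mathcal U$. For this I would use that, on the norm-bounded set $\mathcal U$, the weak topology is determined by coefficients against a countable dense set. On $\mathcal U$ the weak and strong topologies coincide, and Borel measurability of all these countably many coefficients then gives a Borel map.
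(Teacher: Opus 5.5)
Your proposal is correct. Its skeleton is the paper's: realize $h\mapsto U_h$ as a Borel homomorphism from $(B_0,\|\cdot\|_1)$ into the Polish unitary group, then invoke automatic continuity. Where you genuinely differ is the measurability step.

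The paper proves measurability as follows.
\begin{itemize}
\item It builds a jointly Borel version of $(h,z)\mapsto r_h(z)$. The approximants are conditional expectations over an increasing sequence of finite Borel partitions, and martingale convergence gives the limit.
\item Fubini then shows that coefficients between arbitrary bounded Borel functions are Borel in $h$.
\end{itemize}

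You bypass the choice of Radon--Nikodym versions entirely.
\begin{itemize}
\item For strictly positive continuous $\xi,\eta$, your computation $d\bigl(j(h)(\eta^2\mu)\bigr)/d\mu=r_h\,\eta(j(-h)\cdot)^2$ is right. It gives $\langle\xi,U_h\eta\rangle=\|\xi\|_2\|\eta\|_2\,\Aff\bigl(\xi^2\mu/\|\xi\|_2^2,\ j(h)(\eta^2\mu)/\|\eta\|_2^2\bigr)$.
\item This identity holds pointwise in $h$ and uses no joint measurability.
\item The first argument is fixed, and the second varies weakly continuously in $h$. Lemma~\ref{lem:affinity-usc} therefore makes the coefficient upper semicontinuous, hence Borel.
\item The span of positive continuous functions is dense in $L^2(\mu)$, since you can add constants. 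Coefficients of operators of norm at most one are uniformly approximable from a spanning set. Weak and strong topologies agree on the unitary group. Together these finish the argument.
\end{itemize}

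What each route buys: yours reuses a lemma already in the paper and is shorter, and it even yields semicontinuity of the test coefficients. The paper's construction gives a jointly measurable cocycle and Borel coefficients for all bounded Borel test functions. That is more information than the lemma needs.
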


\begin{proof}
We first verify measurability; this is necessary because a Borel scalar
positive definite function alone need not be continuous. Choose
increasing finite Borel partitions $\mathcal P_n$ whose union generates
the Borel sigma-algebra of $Z$. The functions
\[
 r_{h,n}(z)=
 \sum_{\substack{A\in\mathcal P_n\\\mu(A)>0}}
       \frac{(j(h)\mu)(A)}{\mu(A)}\,1_A(z)
\]
are jointly Borel in $(h,z)$. Indeed,
$(j(h)\mu)(A)=\int 1_A(j(h)\cdot z)\,d\mu(z)$ is Borel in $h$.
For each fixed $h$, these functions are the conditional expectations
of $d(j(h)\mu)/d\mu$. Martingale convergence therefore gives a jointly
Borel version of $r_h$: take their finite positive limit where it
exists, and assign the value one elsewhere. For each $h$, the
exceptional set has $\mu$-measure zero.

The change-of-variables formula and the Radon--Nikodym chain rule show
that the displayed operators form a unitary representation. Its matrix
coefficients between bounded Borel functions are Borel functions of
$h$, by joint measurability and integration. Such functions contain a
countable dense subset of $L^2(Z,\mu)$. Since this Hilbert space is
separable, its unitary group with the strong operator topology is
Polish, and these matrix coefficients show that $h\mapsto U_h$ is a
Borel homomorphism into that group.

Automatic continuity of Borel homomorphisms between Polish groups,
recalled in Section~\ref{sec:prelim}, applies to $h\mapsto U_h$.
The final identity follows directly from the definition of affinity.
\end{proof}

\begin{lemma}\label{lem:fixed-quotient}
Assume $H\subseteq\QI(\mu)$. There are a standard Borel space $Y$, a
Borel map $q:Z\to Y$, a probability $\eta=q_*\mu$, and a measurable
family $(\nu_y)_{y\in Y}$ such that
\[
 \mu=\int_Y\nu_y\,d\eta(y),
\]
almost every $\nu_y$ is $D$-ergodic and $D$-quasi-invariant and is
concentrated on $q^{-1}\{y\}$, and, for each fixed $h\in H$,
\begin{equation}\label{eq:quotient-fixed}
 q(h\cdot z)=q(z)\quad\text{for }\mu\text{-almost every }z.
\end{equation}
For each such $h$,
\begin{equation}\label{eq:translate-disintegration}
 \Aff(\mu,h\mu)
   =\int_Y\Aff(\nu_y,h\nu_y)\,d\eta(y).
\end{equation}
\end{lemma}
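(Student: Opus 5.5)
The plan is to take $q$ to be the ergodic decomposition map of $\mu$ for the countable group $D$, and then to derive \eqref{eq:quotient-fixed} from the continuity in Lemma~\ref{lem:koopman-continuity}.

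\emph{Step 1: the decomposition.} Since $D\subseteq H\subseteq\QI(\mu)$, the countable group $D$ acts nonsingularly on $(Z,\mu)$ by Borel bijections. The ergodic decomposition theorem for nonsingular actions of countable groups, cited in the preliminaries, supplies:
\begin{itemize}
\item a standard Borel space $Y$ and a Borel map $q:Z\to Y$ that is $D$-invariant, after discarding a $D$-invariant null set;
\item a Borel kernel $y\mapsto\nu_y$ disintegrating $\mu$ over $q$, with almost every $\nu_y$ concentrated on $q^{-1}\{y\}$, $D$-quasi-invariant and $D$-ergodic.
\end{itemize}
The key property is that the $\sigma$-algebra $q^{-1}(\mathcal B_Y)$ coincides, modulo $\mu$-null sets, with the $\sigma$-algebra $\mathcal I_D$ of $D$-invariant Borel sets.

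\emph{Step 2: $H$ preserves $\mathcal I_D$.} This is the main obstacle, because $D$ is dense in $H$ only for the $\ell_1$ topology. Fix an invariant set $A\in\mathcal I_D$ and put $\xi=1_A$. For $d\in D$ we have $1_A\circ d^{-1}=1_A$ almost everywhere. The Koopman operator then gives $U_d1_A=\sqrt{r_d}\,1_A$, so $U_d$ preserves the closed subspace $L^2(A,\mu)=\{\xi:\xi=0\text{ off }A\}$.

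Now take $h\in H$ and a sequence $d_k\in D$ with $d_k\to h$ in $\ell_1$. Strong continuity (Lemma~\ref{lem:koopman-continuity}) gives $U_{d_k}\xi\to U_h\xi$ for every $\xi$. Since the subspace is closed, $U_h$ also preserves $L^2(A)$, and by the same argument $L^2(Z\setminus A)$.

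Applying this to $\xi=1_A$, the function $U_h1_A=\sqrt{r_h}\,1_A\circ j(-h)$ vanishes off $A$. Because $r_h>0$ almost everywhere, this yields $h^{-1}A\subseteq A$ modulo null sets. The complement gives the reverse inclusion, so $hA=A$ mod $\mu$.

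\emph{Step 3: from sets to the map $q$.} Take a countable generating family $\{B_i\}$ for $\mathcal B_Y$. Step 2 gives $1_{q^{-1}B_i}(h z)=1_{q^{-1}B_i}(z)$ almost everywhere for each $i$. Intersecting these countably many conull sets, and using that the $B_i$ separate points of $Y$, we obtain $q(hz)=q(z)$ for $\mu$-almost every $z$. This is \eqref{eq:quotient-fixed}.

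\emph{Step 4: the affinity formula.} Fix $h\in H$. By \eqref{eq:quotient-fixed},
\[
q_*(h\mu)=(q\circ h)_*\mu=q_*\mu=\eta.
\]
It remains to identify the disintegration of $h\mu$ as $y\mapsto h\nu_y$. The measure $h\nu_y$ is concentrated on $h\,q^{-1}\{y\}$, which agrees with $q^{-1}\{y\}$ for almost every $y$ after another disintegration using \eqref{eq:quotient-fixed}. Moreover,
\[
\int h\nu_y\,d\eta=h\mu .
\]
By uniqueness of disintegrations, the conditional measures of $h\mu$ are $h\nu_y$ almost everywhere.

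Lemma~\ref{lem:affinity-disintegration}, applied to $\rho=\mu$ and $\sigma=h\mu$, then yields \eqref{eq:translate-disintegration}. The upper semicontinuity in Lemma~\ref{lem:affinity-usc} ensures that the integrand $y\mapsto\Aff(\nu_y,h\nu_y)$ is Borel.
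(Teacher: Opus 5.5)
Your proposal is correct and follows the paper's proof essentially step for step: the same nonsingular ergodic decomposition for $D$, the same density-plus-strong-continuity argument from Lemma~\ref{lem:koopman-continuity}, the same separating-family argument for \eqref{eq:quotient-fixed}, and the same identification of $h\mu=\int_Y h\nu_y\,d\eta(y)$ as a disintegration over $q$ before invoking Lemma~\ref{lem:affinity-disintegration}. Where the paper says $U_h$ commutes with the multiplication operator $M_f$ for bounded $D$-invariant $f$, you say $U_h$ preserves $L^2(A)$ and $L^2(Z\setminus A)$, which is the same statement for $f=1_A$; there is also a harmless sign slip, since with the paper's Koopman convention, $U_h1_A$ vanishing off $A$ gives $j(h)A\subseteq A$ rather than $h^{-1}A\subseteq A$, and this does not matter because you derive both inclusions.
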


\begin{proof}
The nonsingular ergodic decomposition theorem for a countable group
provides a countably generated invariant quotient with the asserted
conditional measures; see \cite[Theorem~1.5]{GS}, particularly its
invariance, conditional-expectation, and fiber-concentration assertions.
Thus $q$ is $D$-invariant modulo $\mu$ and can be taken to generate
the invariant sigma-algebra modulo null sets. We use the disintegration
over this quotient, not merely a barycentric representation by ergodic
measures.

To prove \eqref{eq:quotient-fixed}, let $f$ be any bounded
$D$-invariant measurable function,
and let $M_f$ be multiplication by $f$ on $L^2(Z,\mu)$. The Koopman
formula gives
\[
 U_hM_fU_h^*=M_{f\circ(j(-h)\cdot)}\qquad(h\in B_0).
\]
In particular, $M_f$ commutes with $U_d$ for every $d\in D$, using
the identification of $D$ with its preimage in $B_0$. By density of
$D$ and Lemma~\ref{lem:koopman-continuity}, it commutes with every
$U_h$, $h\in B_0$. Hence $f$ is invariant modulo $\mu$ under every
element of $H$.

Apply this to the indicators of the inverse images under $q$ of a
countable family of Borel subsets separating points of $Y$. For each
fixed $h$, intersecting the corresponding countably many conull sets
gives \eqref{eq:quotient-fixed}. There is no assertion of a common
conull set for all $h$.

Fix $h\in H$. Disintegrating the null exceptional set in
\eqref{eq:quotient-fixed} shows
that $h\nu_y$ is concentrated on $q^{-1}\{y\}$ for almost every $y$.
Consequently
\[
 h\mu=\int_Y h\nu_y\,d\eta(y)
\]
is the disintegration of $h\mu$ over the same quotient, with the same
base probability $\eta$. Lemma~\ref{lem:affinity-disintegration} gives
\eqref{eq:translate-disintegration}.
\end{proof}

\subsubsection{The roots argument}

\begin{proof}[Proof of Theorem~\ref{thm:rigidity}]
Assume $H\subseteq\QI(\mu)$ and fix the disintegration in
Lemma~\ref{lem:fixed-quotient}. For $s\geq0$, put
\[
 \varepsilon(s)=
 \sup_{\substack{h\in B_0\\\|h\|_1\leq2s}}
       \bigl(1-\Aff(\mu,j(h)\mu)\bigr).
\]
Then $0\leq\varepsilon(s)\leq1$, and
$\varepsilon(s)\to0$ as $s\downarrow0$ by
Lemma~\ref{lem:koopman-continuity}.

Fix $a\in\R$. Since $\|a(e_0-e_n)\|_1=2|a|$,
\eqref{eq:translate-disintegration} gives
\begin{equation}\label{eq:uniform-affinity-bound}
 \int_Y\Aff(\nu_y,v_n(a)\nu_y)\,d\eta(y)
 =\Aff(\mu,v_n(a)\mu)\geq1-\varepsilon(|a|).
\end{equation}
For every fixed probability $\nu_y$, continuity of the $K$ action and
compactness of $Z$ imply weak convergence
$v_n(a)\nu_y\to g(a)\nu_y$. Lemma~\ref{lem:affinity-usc} therefore
gives
\[
 \Aff(\nu_y,g(a)\nu_y)
 \geq\limsup_n\Aff(\nu_y,v_n(a)\nu_y).
\]
All these quantities lie in $[0,1]$. The reverse Fatou inequality,
together with \eqref{eq:uniform-affinity-bound}, yields
\begin{equation}\label{eq:limit-affinity-bound}
 \int_Y\Aff(\nu_y,g(a)\nu_y)\,d\eta(y)
 \geq1-\varepsilon(|a|).
\end{equation}

Restrict to a fixed conull Borel subset $Y_0\subseteq Y$ on which
$\nu_y$ is $D$-ergodic and $D$-quasi-invariant. Since $K$ is abelian,
$g(a)$ commutes with $D$, so $g(a)\nu_y$ has these two properties as
well. Lemma~\ref{lem:ergodic-dichotomy} shows that these two measures
are equivalent or singular. Thus the Borel set
\[
 B_a=\{y\in Y_0:g(a)\notin\QI(\nu_y)\}
     =\{y\in Y_0:\Aff(\nu_y,g(a)\nu_y)=0\}
\]
satisfies, by \eqref{eq:limit-affinity-bound},
\[
 \eta(B_a)\leq\varepsilon(|a|).
\]
For every positive integer $m$, the identity $m g(a/m)=g(a)$ and the
subgroup property of $\QI(\nu_y)$ imply
$B_a\subseteq B_{a/m}$. Hence
\[
 \eta(B_a)\leq\varepsilon(|a|/m)\longrightarrow0.
\]
Only countably many roots are used for this fixed $a$. Any exceptional
sets arising in \eqref{eq:translate-disintegration} can likewise be
discarded simultaneously for the
countable family $v_n(a/m)$. No choice of a conull set uniform in all
real $a$ is required.

It follows that $g(a)\nu_y$ and $\nu_y$ are equivalent for almost
every $y$. Integrating their null sets in both directions gives
$g(a)\mu\sim\mu$. Since $a$ was arbitrary, every $g(a)$ belongs to
$\QI(\mu)$. Finally, for $h\in\ell_1(\R)$, set
$a=\sum_nh_n$. The decomposition
\[
 h=(h-ae_0)+ae_0,
 \qquad h-ae_0\in B_0,
\]
and the subgroup property give $j(h)\in\QI(\mu)$. This proves
$G\subseteq\QI(\mu)$.
\end{proof}

\subsection{The compact coset relation does not reduce to measure equivalence}
\label{sec:coset-nonreduction}

Let $K/H$ be the coset equivalence relation on $K$:
\[
 x\mathrel{K/H}y\quad\Longleftrightarrow\quad x-y\in H.
\]
We take $\ME$ on $\Prob(2^{\N})$, where it denotes mutual absolute
continuity. This is the usual standard Borel model of measure
equivalence; changing the underlying uncountable standard Borel space
does not change its Borel isomorphism type.

\begin{theorem}\label{thm:nonreduction}
The relation $K/H$ is not Borel reducible to $\ME$:
\[
 K/H\nleB\ME.
\]
\end{theorem}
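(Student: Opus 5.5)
The plan is to combine the compact averaging construction of Proposition~\ref{prop:averaging} with the rigidity Theorem~\ref{thm:rigidity}. Suppose, toward a contradiction, that $x\mapsto\mu_x$ is a Borel reduction of $K/H$ to $\ME$ on $\Prob(2^\N)$.

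The first step is to check that Proposition~\ref{prop:averaging} applies to the present group. That proposition was stated for an arbitrary compact metrizable abelian group and an arbitrary subgroup. Here $K=L^\N$ is compact metrizable and abelian, and $H\le K$ is a subgroup. Its proof uses only Haar probability $\lambda$ on $K$, Lemma~\ref{lem:fibers}, and the fact that the reduction sends every pair $(x,x-g)$ to equivalent measures when $g\in H$, and to inequivalent measures when $g\notin H$.

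We therefore obtain a probability $M$ on the compact metrizable space $Z=K\times2^\N$, with translations $T_g(x,y)=(x+g,y)$, such that $M_g\ME M$ exactly when $g\in H$. In the notation of Section~\ref{sec:measure-obstruction}, this says $\QI(M)=H$ for the action $g\cdot(x,y)=(x+g,y)$. This action of $K$ on $Z$ is continuous, since addition in $K$ is continuous.

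The second step is to apply Theorem~\ref{thm:rigidity} with $\mu=M$. Since $H\subseteq\QI(M)$, the theorem gives $G\subseteq\QI(M)=H$. But Section~\ref{sec:groups} records that $g(1)=j(e_0)\in G$ while $g(1)\notin H$. To see the latter directly, any preimage under the injective map $j$ would have to be $e_0$, whose coordinate sum is $1\ne0$. This is the desired contradiction.

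The only point needing care is that Theorem~\ref{thm:rigidity} is genuinely used with $\QI(M)$ containing $H$ and nothing more. We need no singularity of $M_g$ for $g\notin H$ beyond failure of equivalence, and that is exactly what \eqref{eq:quasi-group} supplies. I expect no real obstacle, since all the analytic difficulty (Koopman continuity on $B_0$, the ergodic decomposition over $D$, and the roots argument) has been absorbed into Theorem~\ref{thm:rigidity}. What remains is verifying that the hypotheses match: continuity of the action, compact metrizability of $Z$, and that the quasi-invariance subgroup produced by averaging is computed for the same translation action.
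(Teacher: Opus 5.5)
Your proposal is correct and follows the paper's proof exactly. Compact averaging (Proposition~\ref{prop:averaging}) produces a probability $M$ on $K\times2^\N$ with $\QI(M)=H$ for first-coordinate translation, and Theorem~\ref{thm:rigidity} then forces $G\subseteq H$, contradicting $j(e_0)\in G\setminus H$; your added checks (applicability of averaging to $K=L^\N$, continuity of the translation action, and that only non-equivalence rather than singularity is needed) are accurate and match the paper's accompanying remark.
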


\begin{proof}
If $K/H\leB{\equiv_m}$, Proposition~\ref{prop:averaging} supplies a
probability $M$ on $K\times2^\N$ whose exact quasi-invariance group
for first-coordinate translation is $H$. Theorem~\ref{thm:rigidity}
would then give $G\subseteq H$. This is impossible because
$j(e_0)\in G\setminus H$.
\end{proof}

\begin{remark}
The compact-averaging argument uses only nonequivalence of the reduction
kernel values $\mu_{x-k}$ and $\mu_x$ when $k\notin H$. It does not
assume, or infer, that those measures
are mutually singular. The ergodic dichotomy is used earlier, only
for the conditional ergodic measures in the proof of rigidity.
\end{remark}
\subsection{A restricted-product measure and an ergodic action}
\label{sec:restricted-product}

We construct a semifinite measure whose translation modulus records the
sum of an absolutely summable sequence. Throughout this subsection, sequences
are indexed by $\N=\{0,1,\ldots\}$.
Set
\[
 I=[-\log 2,0],\qquad dm(t)=e^{-t}\,dt,\qquad \rho=m\mathbin{\upharpoonright}I.
\]
Thus $\rho$ is a probability measure, with density between $1$ and $2$ on
$I$. Put $S=I^{\N}$ and, for $N\geq0$, let
\[
 X_N=\R^{\{0,\ldots,N-1\}}\times I^{\{N,N+1,\ldots\}},
 \qquad X=\bigcup_{N\geq0}X_N.
\]
Regard
\[
 \sigma_N=m^{\otimes N}\otimes\rho^{\otimes\{N,N+1,\ldots\}}
\]
as a measure on $\R^{\N}$ carried by $X_N$. Since
$\sigma_{N+1}\mathbin{\upharpoonright}X_N=\sigma_N$, these measures increase.
Their limit
\[
 \sigma(B)=\lim_{N\to\infty}\sigma_N(B)
\]
is a measure and satisfies $\sigma\mathbin{\upharpoonright}X_N=\sigma_N$.
Indeed, countable additivity follows by interchanging two increasing sums
of nonnegative numbers. The sets
\[
 [-r,r]^N\times I^{\{N,N+1,\ldots\}},\qquad N,r\geq1,
\]
have finite measure and cover $X$. Consequently $\sigma$ is
$\sigma$-finite. It is diffuse and infinite, and
\[
 \sigma(S)=1,\qquad \sigma\mathbin{\upharpoonright}S=\rho^{\otimes\N}.
\]
For $h\in\R^{\N}$, write $T_h(x)=x+h$ and
$\sigma_h=(T_h)_*\sigma$.

\begin{lemma}
\label{lem:restricted-translation}
For every $h\in\R^{\N}$,
\[
 \begin{cases}
 \displaystyle\sigma_h=\exp\!\left(\sum_{n=0}^{\infty}h_n\right)\sigma,
       & h\in\ell_1(\R),\\[4pt]
 \sigma_h\perp\sigma,&h\notin\ell_1(\R).
 \end{cases}
\]
\end{lemma}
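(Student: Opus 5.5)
The plan is to work one finite block at a time and then pass to the restricted product. On each $X_N$ the measure $\sigma$ is the product $m^{\otimes N}\otimes\rho^{\otimes\{N,\dots\}}$.

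Translation of $m$ by $h_n$ gives $(T_{h_n})_*m=e^{h_n}m$, since the density $e^{-(t-h_n)}=e^{h_n}e^{-t}$. Translation of $\rho$ by $h_n$ gives the measure with density $e^{h_n}e^{-t}$ on $I+h_n$.

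For the summable case, fix $h\in\ell_1(\R)$ and a Borel set $B$. Since $X=\bigcup_N X_N$ is increasing and $\sigma$ is the increasing limit of the $\sigma_N$, it suffices to compare $\sigma_h$ and $e^{\sum h_n}\sigma$ on sets of the form $B\cap X_N\cap T_h(X_{N'})$ and let $N,N'\to\infty$. Each such set lies in $X_{\max(N,N')}$, so it is enough to treat $B\subseteq X_M$ with $T_h^{-1}B\subseteq X_{M'}$, for $M,M'$ large.

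By Fubini, $\sigma_h(B)=\sigma(B-h)$ factors into the first $M'' = \max(M,M')$ coordinates, which carry Lebesgue-type measure $m$, and a tail. On the first $M''$ coordinates the translation contributes exactly $e^{\sum_{n<M''}h_n}$. On the tail, $\rho^{\otimes}$ translated by $(h_n)_{n\ge M''}$ is compared with $\rho^{\otimes}$ itself. The cleanest route is to enlarge $M''$ as needed: both sides are increasing unions over $M''$. Given a set $B\subseteq X_M$, its translate $B-h$ lies in $\R^{M''}\times\prod_{n\ge M''}(I-h_n)$. The products $\prod_{n\ge M''}(I-h_n)$ and $I^{\N}$ differ in tail mass by $\prod_{n\ge M''}\rho(I\cap(I-h_n))\to1$, because $\sum|h_n|<\infty$ and $1-\rho(I\cap(I-h_n))\le 2|h_n|$.

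I therefore expect to write $\sigma(B-h)$ as a limit over $M''$ of $e^{\sum_{n<M''}h_n}$ times integrals over the tail. Using $m$-translation on the first coordinates and the restriction property $\sigma\upharpoonright X_{M''}=\sigma_{M''}$, the ratio converges to $e^{\sum h_n}$. The main obstacle is this bookkeeping of domains. Points of $B$ whose tail lies in $I$ may, after translation, have finitely many tail coordinates outside $I$. This is harmless because $X$ absorbs finitely many free coordinates, and on those coordinates the measure is $m$, which is exactly $e^{h}$-quasi-invariant. Concretely, I would show $\sigma(T_h^{-1}B)=e^{\sum h_n}\sigma(B)$ first for cylinder-type sets $B=B_0\times\prod_{n\ge M}I$ with $B_0\subseteq\R^M$. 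The key observation is that $T_h^{-1}$ of such a set is contained in $X$ up to a $\sigma$-null set exactly because $\sum|h_n|<\infty$, by Borel--Cantelli for the independent coordinates. Monotone class and $\sigma$-finiteness then extend the identity.

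For the non-summable case, take $h\notin\ell_1$. Then $\sigma_h$ is carried by $X+h=\bigcup_N(X_N+h)$, and $\sigma$ is carried by $X$. Fix $N,N'$. The set $X_N\cap(X_{N'}+h)$ forces $x_n\in I\cap(I+h_n)$ for all $n\ge\max(N,N')$. Under $\sigma\upharpoonright X_N$, whose tail is the product $\rho^{\otimes}$, this has measure at most a finite factor times $\prod_{n\ge\max(N,N')}\rho(I\cap(I+h_n))$. If infinitely many $|h_n|\ge\log2$, the factors vanish. Otherwise $\rho(I\cap(I+h_n))\le1-c|h_n|$, with $c>0$ coming from the density lower bound $1$, and divergence of $\sum|h_n|$ makes the product zero. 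Thus $\sigma(X+h)=0$ by countable union, and symmetrically $\sigma_h(X)=\sigma(X-h)=0$. Hence $\sigma_h\perp\sigma$.
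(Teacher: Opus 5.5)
Your proposal is correct. The non-summable half is essentially the paper's argument. The paper applies the second Borel--Cantelli lemma to the independent departure events $\{x_n+h_n\notin I\}$ under each $\sigma_N$. You instead compute $\prod_{n\ge\max(N,N')}\rho(I\cap(I+h_n))=0$ directly and take the countable union over $N,N'$, which is the same estimate. One small correction there: the head factor is not finite, since unrestricted coordinates carry $m(\R)=\infty$. Tonelli with the convention $0\cdot\infty=0$ still gives $\sigma(X_N\cap(X_{N'}+h))=0$.

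In the summable half, your concrete plan (rather than the vaguer ``ratio'' bookkeeping before it) is a somewhat more direct route than the paper's. The paper computes $\sigma_N(B-h)$ only for cylinders $B\subseteq S$ and gets $\sigma_h\mathbin{\upharpoonright}S=e^{\sum h_n}\sigma\mathbin{\upharpoonright}S$ from uniqueness of finite measures. It then spreads this to $X$ through the finite-shift formula on the countably many translates $S+c$, $c\in\Q^{(\N)}$. You work with all cylinders $B_0\times I^{\{M,M+1,\ldots\}}$, $B_0\subseteq\R^M$ Borel. The same computation
\[
 \sigma_N(B-h)=e^{\sum_{n<N}h_n}\,m^{\otimes M}(B_0)\prod_{n\ge N}\rho(I-h_n)\qquad(N\ge M)
\]
goes through verbatim because the head coordinates carry $m$. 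These sets form a $\pi$-system generating the Borel sets of $X$, and the boxes $[-r,r]^M\times I^{\{M,\ldots\}}$ supply the exhausting finite-measure sequence for $\sigma$-finite uniqueness. This removes the spreading and disjointification step, at the cost of invoking the $\sigma$-finite rather than the finite uniqueness theorem.

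In both routes one must still show that $\sigma_h$ is carried by $X$, i.e.\ $x+h\in X$ for $\sigma$-almost every $x$. Your ``key observation'' as literally worded ($T_h^{-1}B\subseteq X$ up to a null set) is vacuous, since $\sigma$ itself is carried by $X$. The statement you actually need is the paper's first Borel--Cantelli step. In your route it can even be avoided. Apply the identity on $X$ with $-h$ to the set $A=(X^c-h)\cap X$; this gives $e^{-\sum h_n}\sigma(A)=\sigma(A+h)\le\sigma(X^c)=0$, hence $\sigma_h(X^c)=\sigma(A)=0$.
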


\begin{proof}
If $c$ has finite support, then $m(B-c_n)=e^{c_n}m(B)$ in each translated
coordinate. Applying this identity to $\sigma_N$ for $N$ beyond the support
of $c$ and then taking the increasing limit gives
\begin{equation}
 \sigma_c=e^{\sum_n c_n}\sigma.
 \label{eq:finite-shift-modulus}
\end{equation}

For the support calculation, define
\[
 b(u)=\rho\{t\in I:t+u\notin I\}.
\]
If $|u|<\log2$, the departing interval has length $|u|$, and the density
of $\rho$ on that interval lies between $1$ and $2$. If $|u|\geq\log2$,
then $b(u)=1$, up to irrelevant endpoints. There are therefore constants
$c,C>0$ such that
\begin{equation}
 c\min\{1,|u|\}\leq b(u)\leq C\min\{1,|u|\}.
 \label{eq:departure-estimate}
\end{equation}
Under $\sigma_N$, the coordinates after $N$ are independent with law
$\rho$. If $h\in\ell_1$, the first Borel--Cantelli lemma shows that
$x_n+h_n$ leaves $I$ only finitely often for almost every tail. Fubini's
theorem, followed by the increasing union over $N$, gives
$\sigma_h(X^c)=0$. If $h\notin\ell_1$, the sum of the departure
probabilities is infinite by \eqref{eq:departure-estimate}. Independence
and the second Borel--Cantelli lemma give infinitely many departures
almost surely, for every $\sigma_N$. Hence $\sigma_h(X)=0$. As $\sigma$
is carried by $X$, this proves the singularity assertion.

Suppose now that $h\in\ell_1$. For $N\geq0$, the product formula gives
\[
 \sigma_N(S-h)
   =\exp\!\left(\sum_{n<N}h_n\right)
        \prod_{n\geq N}\rho(I-h_n).
\]
The infinite product here denotes the probability of the intersection
of the independent tail conditions. Since
$\sum_n(1-\rho(I-h_n))<\infty$, its tail tends to $1$; this also follows
directly from the union bound
\[
 1-\prod_{n\geq N}\rho(I-h_n)
       \leq\sum_{n\geq N}(1-\rho(I-h_n)).
\]
It follows that
\begin{equation}
 \sigma_h(S)=\exp\!\left(\sum_n h_n\right).
 \label{eq:shifted-base-mass}
\end{equation}
More generally, let $B_i\subseteq I$ be Borel for $i<M$, and set
\[
 B=\prod_{i<M}B_i\times I^{\{M,M+1,\ldots\}}\subseteq S.
\]
For $N\geq M$, the same computation yields
\[
 \sigma_N(B-h)=
 \exp\!\left(\sum_{n<N}h_n\right)
 \left(\prod_{i<M}m(B_i)\right)
 \prod_{n\geq N}\rho(I-h_n).
\]
Taking limits proves the desired identity on this cylinder
$\pi$-system. Both restrictions to $S$ are finite, by
\eqref{eq:shifted-base-mass}, so uniqueness of finite measures gives
\[
 \sigma_h\mathbin{\upharpoonright}S
   =\exp\!\left(\sum_n h_n\right)
       \sigma\mathbin{\upharpoonright}S.
\]

Translations commute, so \eqref{eq:finite-shift-modulus} also implies
$(\sigma_h)_c=e^{\sum c_n}\sigma_h$ for every finitely supported $c$.
The preceding equality therefore holds on every translate $S+c$ with
$c\in\Q^{(\N)}$. These countably many translates cover $X$: each of
the finitely many coordinates outside $I$ can be moved into $I$ by a
rational translation. Disjointifying this cover proves the equality
on $X$. Both measures are carried by $X$, completing the proof.
\end{proof}

Let
\[
 D_{\R}=\left\{d\in\Q^{(\N)}:\sum_n d_n=0\right\}.
\]
This is the real-coordinate version of the subgroup $D=j(D_{\R})$
introduced earlier.

\begin{lemma}
\label{lem:restricted-ergodicity}
The translation action of $D_{\R}$ on $(X,\sigma)$ is free, measure
preserving, and ergodic.
\end{lemma}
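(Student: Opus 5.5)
We prove the three properties in turn, treating freeness and measure preservation first and ergodicity last.

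\emph{Freeness.} A nonzero $d\in D_{\R}$ changes some coordinate by a nonzero amount, so $x+d\neq x$ for every $x$. \emph{Measure preservation.} Since $X$ is invariant under finitely supported translations, each $d\in D_{\R}$ maps $X$ to itself. By \eqref{eq:finite-shift-modulus}, $\sigma_d=e^{\sum_n d_n}\sigma=\sigma$ because $\sum_n d_n=0$.

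\emph{Ergodicity.} Let $A\subseteq X$ be Borel and $D_{\R}$-invariant modulo $\sigma$, with $\sigma(A)>0$. We show $\sigma(X\setminus A)=0$ in two steps.

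First we reduce to invariance under all finitely supported real sums-zero translations. The measure $\sigma_N$ on $X_N$ is a product $m^{\otimes N}\otimes\rho^{\otimes\{N,\ldots\}}$. Take any finitely supported real $c$ with $\sum c_n=0$ and support below $N$. For $d\to c$ in $D_{\R}$ with supports below $N$, we have $1_A\circ T_{d}\to 1_A\circ T_c$ in $L^1$ of any finite piece of $\sigma_N$. This follows from continuity of translation in $L^1(\R^N,\text{Lebesgue})$, applied fiberwise after writing $m$ with its continuous density and using Fubini over the tail. Hence $A$ is invariant modulo $\sigma$ under every finitely supported real sum-zero vector.

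Second, fix $N$ and work on $X_N\cap(\R^N\times I^{\{N,\ldots\}})$. Invariance under the real sum-zero translations of the first $N$ coordinates shows that $1_A$ depends, modulo null sets, only on $s_N(x)=\sum_{n<N}x_n$ and on the tail $(x_n)_{n\geq N}$. This is standard: a function on $\R^N$ invariant a.e.\ under the hyperplane $\{\sum=0\}$ factors through the sum. Write $1_A=\phi_N(s_N,x_N,x_{N+1},\ldots)$. Comparing the representations for $N$ and $N+1$ on $X_N\subseteq X_{N+1}$ gives $\phi_N(s,x_N,\text{tail})=\phi_{N+1}(s+x_N,\text{tail})$. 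We now use that the coordinates $x_{N},x_{N+1},\ldots$ under $\rho$ are i.i.d.\ and independent of the initial block. The event $A$ restricted to $S+c$-type pieces is then measurable with respect to $\sigma(s_M)\vee$ (tail beyond $M$) for every $M$. Since $\sum_n x_n$ does not converge, we cannot directly use a limit of $s_M$. Instead we apply a Hewitt--Savage argument: permutations of finitely many coordinates within $I^{\N}$ preserve $\rho^{\otimes\N}$, and they are approximated by sum-zero translations only in a measure sense. This is the step I expect to be the main obstacle.

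The cleaner route, which I would try first, is a conditional zero--one law. Fix $s$, and note that the conditional law of the tail is the same product $\rho^{\otimes}$ for each $N$. Set $g_N(s)=\int\phi_N(s,\text{tail})\,d\rho^{\otimes}$; the compatibility above shows that $g_N$ is a martingale in the reversed filtration generated by $(s_M,\text{tail beyond }M)$. That filtration decreases to a $\sigma$-algebra which is trivial up to the dependence on the sum's fractional structure. Triviality follows from Kolmogorov's law applied to the i.i.d.\ increments $x_n$: the joint process $(s_M)$ is a random walk with nonlattice step law $\rho$, and the Hewitt--Savage/Blackwell--Freedman theorem says its exchangeable and tail $\sigma$-algebras are trivial for spread-out steps. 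We conclude that $\phi_N$ is a.e.\ constant in the tail and depends only on $s$. The relation $\phi_N(s,x_N,\ldots)=\phi_{N+1}(s+x_N,\ldots)$ then gives a function $\psi$ on $\R$ with $\psi(s)=\psi(s+t)$ for $\rho$-a.e.\ $t\in I$ and a.e.\ $s$. Such a function is Lebesgue-a.e.\ constant, because it is invariant under a set of translations of positive measure and hence under the group it generates, which is $\R$. Therefore $A$ is null or conull in each $X_N$, and hence in $X$.
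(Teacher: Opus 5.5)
Your cleaner route is correct in substance, but it is not the paper's route. The paper uses the idea you abandoned.

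\textbf{The paper's route.} Once $1_A$ agrees $\sigma_N$-a.e.\ on $X_N$ with a function of $(s_N,\text{tail})$, the restriction $1_A|_S$ is invariant modulo $\rho^{\otimes\N}$ under permutations of the first $N$ coordinates. This holds simply because such permutations preserve $s_N$ and the tail. (The paper obtains the factorization from rational invariance by convolving on sum fibers; you pass to real translations by $L^1$-continuity. Both are fine.) No approximation of permutations by translations is needed, so the obstacle you anticipated does not exist. Hewitt--Savage then makes $1_A$ constant on $S$. The paper propagates this to $X$ by proving $D_\R S$ conull: almost every point has infinitely many tail coordinates in a closed $J\subset\operatorname{int}(I)$, and these absorb the rational sum created when the head is moved into $I$.

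\textbf{Your route.} You apply the zero--one law fiberwise instead. Iterating the compatibility relation gives $\phi_N(s,x_N,\dots,x_{M-1},\omega_M)=\phi_M(s+x_N+\dots+x_{M-1},\omega_M)$. Hence for a.e.\ fixed $s$, the function $\phi_N(s,\cdot)$ is invariant under finite permutations of the tail, and Hewitt--Savage alone makes it constant. Two points about how you justify this step:
\begin{itemize}
\item Kolmogorov's law does not suffice, because the relevant $\sigma$-algebra is the exchangeable one, not the tail of $(x_n)$.
\item The reversed martingale, the ``fractional structure'', the nonlattice or spread-out conditions, and Blackwell--Freedman are all unnecessary once $s$ is fixed.
\end{itemize}

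\textbf{A fix in your last step.} What you actually obtain is $\psi_N(s)=\psi_{N+1}(s+t)$ for a.e.\ $(s,t)\in\R\times I$, which involves two different functions, not one $\psi$. Compare two values $t_1,t_2$: this shows $\psi_{N+1}$ is invariant under the positive-measure set of translations $t_2-t_1$. By Steinhaus, $\psi_{N+1}$ is then constant, and so is $\psi_N$.

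\textbf{Comparison.} Your approach trades the paper's explicit construction moving points into $S$ for fiberwise a.e.\ bookkeeping and a Steinhaus argument. It gives constancy on each $X_N$ directly. In both proofs, it is the nonempty interior of $I$ that removes the dependence on the initial sum.
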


\begin{proof}
Freeness is immediate, since a nonzero translation has no fixed point.
Measure preservation follows from \eqref{eq:finite-shift-modulus}.
Let $E\subseteq X$ be a Borel set invariant modulo $\sigma$ under
$D_{\R}$, and write $f=1_E$. We first show that $f$ is constant almost
everywhere on $S$.

Fix $N\geq2$. Restricting to $X_N$ and applying Fubini's theorem, we find
that for almost every tail under the product law, the resulting measurable function of
the first $N$ real variables is invariant, modulo Lebesgue null sets,
under every rational vector of coordinate sum zero. There is one
exceptional null set for this assertion because the rational group is
countable. The measure $m^{\otimes N}$ and Lebesgue measure have the same
null sets.

Such a function depends almost everywhere only on the sum of its
coordinates. To justify this explicitly, use linear coordinates
consisting of the sum and $N-1$ of the original coordinates. On almost
every sum fiber, Fubini's theorem gives invariance under the dense
rational subgroup of $\R^{N-1}$. A bounded measurable function with
this invariance is almost everywhere constant: convolution with a
compactly supported smooth approximate identity gives continuous
functions invariant under the rational subgroup, hence under all
translations, and these convolutions converge locally in $L^1$ to the
original function. This proves the assertion on each sum fiber.

In particular, $f\mathbin{\upharpoonright}S$ is invariant modulo
$\rho^{\otimes\N}$ under every finite permutation of the coordinates.
Here is the elementary zero--one argument needed for this conclusion.
If an event $F$ in an iid product is invariant under finite permutations,
choose a finite-head event $B$ with
$\mathbb P(F\mathbin{\triangle}B)<\varepsilon$. A finite permutation
moving that head to a disjoint block gives an event $B'$ independent of
$B$, with the same probability and with
$\mathbb P(F\mathbin{\triangle}B')<\varepsilon$. Thus
\[
 \bigl|\mathbb P(F)-\mathbb P(B)^2\bigr|<2\varepsilon.
\]
As $\varepsilon$ tends to zero, this gives
$\mathbb P(F)=\mathbb P(F)^2$. The argument only uses invariance modulo
null sets. Applying it to $E\cap S$ shows that $f$ equals one constant
in $\{0,1\}$ almost everywhere on $S$.

It remains to show that $D_{\R}S$ is conull in $X$. Choose a closed
interval $J\subset\operatorname{int}(I)$ of positive length. For
$\sigma$-almost every $x\in X$, infinitely many tail coordinates lie
in $J$. Indeed, this holds in the independent tail of each $\sigma_N$,
and Fubini's theorem and the increasing union prove it for $\sigma$.
For such an $x$, choose $N$ so that $x_n\in I$ for $n\geq N$.
Choose rational increments in the first $N$ coordinates which move them
into $\operatorname{int}(I)$, and let $s\in\Q$ be their sum. Choose
$M$ further coordinates lying in $J$, with
\[
 |s|/M<\operatorname{dist}(J,\partial I),
\]
and add $-s/M$ to each. The resulting translation has finite rational
support, has sum zero, and moves $x$ into $S$. This proves the conullity
of $D_{\R}S$. Since the group is countable and measure preserving, the
constant value of $f$ on $S$ propagates to a conull subset of $X$.
Hence the action is ergodic.
\end{proof}

We now push $\sigma$ forward by the continuous injective homomorphism
$j:\R^{\N}\to K$ and continue to denote the resulting measure by
$\sigma$. This is a $\sigma$-finite Borel measure on $K$: the image of
every set in the explicit countable finite-measure cover above is Borel,
by Lusin--Souslin. Its support in the measure-theoretic sense is contained
in the Borel set $j(X)$. The set $j(\R^{\N})$ is a Borel subgroup of $K$.
If $k\notin j(\R^{\N})$, translating $j(X)$ by $k$ moves it into a
disjoint coset of this subgroup. Combining this observation with
Lemma~\ref{lem:restricted-translation}, we obtain
\begin{equation}
 \QI(\sigma)=G=j(\ell_1(\R)),\qquad
 \sigma_{j(h)}=e^{\sum_n h_n}\sigma\quad(h\in\ell_1(\R)),
 \label{eq:compact-measure-modulus}
\end{equation}
and $\sigma_k\perp\sigma$ for every $k\notin G$. By
Lemma~\ref{lem:restricted-ergodicity}, the action of $D$ on $(K,\sigma)$
is free, measure preserving, and ergodic. These assertions concern the
measure space carried by $j(X)$; no local finiteness of $\sigma$ in the
compact topology of $K$ is asserted.

\subsection{Finite-corner representation coding}
\label{sec:finite-corner}

Let $D$ act on $C(K)$ by $\theta_d(f)(z)=f(z-d)$, and form the full
crossed product
\[
 A=C(K)\rtimes_\theta D.
\]
It is a separable unital $C^*$-algebra, since $K$ is compact metrizable
and $D$ is countable. The canonical covariant representation on the
measure space above gives a homomorphism
\[
 \iota_A:A\longrightarrow M,
 \qquad M=L^\infty(K,\sigma)\rtimes D.
\]
We do not need $\iota_A$ to be injective. Its image is ultraweakly dense
in $M$: the continuous functions generate the Borel sigma-algebra of
$K$, so their multiplication operators generate $L^\infty(K,\sigma)$,
and the image also contains the canonical group unitaries $u_d$.

The standard group--measure-space construction supplies a faithful
normal conditional expectation $E:M\to L^\infty(K,\sigma)$.
The freeness and ergodicity established above imply that $M$ is a
factor. Moreover,
\[
 \tau(b)=\int_K E(b)\,d\sigma,\qquad b\in M_+,
\]
is a faithful normal semifinite trace. Since the underlying measure is
diffuse, invariant, and infinite, $M$ is of type $\mathrm{II}_\infty$.
These are precisely the $\sigma$-finite versions of the crossed-product
facts in \cite[Theorems~5.2.2 and~5.2.4]{Peterson}. In particular, no
probability-preserving hypothesis is being substituted for the infinite
measure-preserving action used here.

Let
\[
 p=1_{j(S)}\in L^\infty(K,\sigma)\subseteq M.
\]
Then $\tau(p)=1$. On $L^2(M,\tau)$ write $L_b$ and $R_b$ for left
and right multiplication by $b\in M$. The projection onto the right
corner is $R_p$, and we put
\[
 \mathcal H=R_pL^2(M,\tau)=L^2(M,\tau)p.
\]
This is a fixed separable Hilbert space. Define
\[
 \lambda_p(b)=L_b\mathbin{\upharpoonright}\mathcal H\quad(b\in M),
 \qquad \pi=\lambda_p\circ\iota_A.
\]
The normal representation $\lambda_p$ is faithful: its kernel is an
ultraweakly closed two-sided ideal of the factor $M$, and it is nonzero
as a representation. The trace commutation theorem and compression to the right
corner give the following identity; see \cite[Remark~5.15]{Hiai20}
and \cite[Lemma~3.3.8]{Peterson}:
\[
 \lambda_p(M)'=R_pR(M)R_p\cong(pMp)^{\mathrm{op}}.
\]
Since $pMp$ is a $\mathrm{II}_1$ factor, $\mathcal H$ is infinite
dimensional, $\pi(A)''\cong M$ is of type $\mathrm{II}_\infty$, and
$\pi(A)'$ is of type $\mathrm{II}_1$.

For each $k\in K$, define an automorphism of $A$ by
\[
 \alpha_k(f)(z)=f(z+k),\qquad \alpha_k(u_d)=u_d.
\]
Translations commute with the $D$-action, so these formulas respect
the covariance relations and define an automorphism of the full crossed
product. They form a point-norm continuous action: continuity holds
uniformly on $K$ for each $f\in C(K)$, then for finite sums $\sum_d f_du_d$,
and finally for all of $A$ by norm approximation. Set
\begin{equation}
 \pi_k=\pi\circ\alpha_k\in\Rep(A,\mathcal H).
 \label{eq:fixed-module-family}
\end{equation}
Thus the family is point-norm continuous on one fixed Hilbert space.
Choosing one orthonormal basis identifies $\mathcal H$ with the usual
separable infinite-dimensional Hilbert space. There is no choice of a
Hilbert space, trace, or projection depending on $k$ in
\eqref{eq:fixed-module-family}.

\begin{lemma}
\label{lem:diagonal-spectral-class}
The maximal spectral measure class of
$\pi_k\mathbin{\upharpoonright}C(K)$ is $[\sigma_k]$.
In particular, if $k\notin G$, then $\pi_k$ and $\pi$ are disjoint.
\end{lemma}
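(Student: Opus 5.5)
First I will compute the restriction of $\pi_k$ to $C(K)$ on the fixed corner $\mathcal H=L^2(M,\tau)p$. For $f\in C(K)$, $\pi_k(f)=\lambda_p(\iota_A(f(\cdot+k)))$ is left multiplication by the function $f(\cdot+k)$ viewed in $L^\infty(K,\sigma)\subseteq M$. So the spectral class of $\pi_k|C(K)$ is the pushforward of the spectral class of $\pi|C(K)$ under the translation $z\mapsto z+k$. Concretely, $\langle \pi_k(f)\xi,\xi\rangle=\int f(z+k)\,d\mu_\xi(z)$, where $\mu_\xi$ is the scalar spectral measure of $\xi$ for $\pi$. The maximal class for $\pi_k$ is therefore the translate of the maximal class for $\pi$. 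With the convention $\sigma_k=(T_k)_*\sigma$ and the sign choices in the definition of $\alpha_k$, this translate is $[\sigma_{-k}]$ or $[\sigma_k]$. I will track the sign carefully; the second claim is symmetric under $k\mapsto -k$ because $G$ is a subgroup.

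The main step is to show that the maximal spectral measure class of $\pi|C(K)$ equals $[\sigma]$. I would use the Fourier-type description of $L^2(M,\tau)$. Since $E$ is a faithful normal conditional expectation and $\tau=\int E(\cdot)\,d\sigma$, the space $L^2(M,\tau)$ decomposes as the orthogonal sum $\bigoplus_{d\in D}L^2(K,\sigma)u_d$. Left multiplication by $f\in L^\infty(K,\sigma)$ acts on the summand $L^2(K,\sigma)u_d$ as multiplication by $f$, because $f\,\xi u_d=(f\xi)u_d$. Hence on the whole space, $L_f$ is unitarily equivalent to a multiple of the multiplication representation on $L^2(K,\sigma)$.

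Right compression by $p$ keeps the subspaces $\xi u_d p=\xi\,\theta_d(p)u_d$. So $\mathcal H\cong\bigoplus_d L^2(K,\,1_{j(S)+d}\,\sigma)$, or the analogous expression with the sign convention of $\theta_d$. Each summand is a multiplication representation whose spectral measure is $\sigma$ restricted to $j(S)+d$. The maximal class is therefore $\sigma$ restricted to the union $\bigcup_d(j(S)+d)=j(D_\R S)$. By the conullity proved in Lemma~\ref{lem:restricted-ergodicity}, this union is $\sigma$-conull, so the class is $[\sigma]$. I expect this bookkeeping to be the main obstacle: identifying $L^2(M,\tau)$ with $\ell_2(D;L^2(K,\sigma))$ via the trace formula, and checking that $\lambda_p$ restricted to $C(K)$ factors through $L^\infty(K,\sigma)$ as claimed.

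For disjointness, take $k\notin G$. By \eqref{eq:compact-measure-modulus}, $\sigma_k\perp\sigma$, and since $G$ is a subgroup, also $\sigma_{-k}\perp\sigma$. Suppose there were a nonzero intertwiner $T$ between $\pi_k$ and $\pi$. Then $T$ intertwines their restrictions to $C(K)$, so the range of $T^*T$ and the range of $TT^*$ carry subrepresentations of $C(K)$ with equivalent spectral classes. These classes are absolutely continuous with respect to $[\sigma_{\pm k}]$ and to $[\sigma]$ respectively. Mutually singular classes leave no room for a nonzero common part, which forces $T=0$. Hence $\pi_k$ and $\pi$ are disjoint.
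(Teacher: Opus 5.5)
Your proposal is correct, but it identifies the spectral class of $\pi$ by a different route from the paper.

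\textbf{How the paper argues.} It works abstractly. The Borel spectral projection of $B$ for $\pi$ is $\lambda_p(1_B)$. Since $\lambda_p$ is a nonzero normal representation of the factor $M$, it is faithful, so $\lambda_p(1_B)=0$ exactly when $\sigma(B)=0$. Translating by $\alpha_k$ replaces this projection by $\lambda_p(1_{B-k})$, whose vanishing criterion is $\sigma_k(B)=0$.

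\textbf{How your route differs.} You build the concrete model $\mathcal H\cong\bigoplus_{d\in D}L^2(K,1_{j(S)+d}\,\sigma)$, with $C(K)$ acting by multiplication. You then use the conullity of $\bigcup_d(j(S)+d)$ proved inside Lemma~\ref{lem:restricted-ergodicity}.
\begin{itemize}
\item This avoids factoriality, and with it freeness and the full ergodicity conclusion. The covering statement you use says exactly that the central support of $p$ is $1$, which is all that faithfulness of $\lambda_p$ needs.
\item Your model also shows the multiplicity explicitly.
\item The cost is verifying the orthogonal decomposition $L^2(M,\tau)=\bigoplus_d L^2(K,\sigma)u_d$ for the $\sigma$-finite crossed product. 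Orthogonality and isometry of each summand follow from $\tau=\int E\,d\sigma$ and $D$-invariance of $\sigma$. The paper's argument never needs this decomposition.
\end{itemize}

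\textbf{Disjointness.} The paper takes a Borel set $B$ that is $\sigma$-null and $\sigma_k$-conull. It then observes that any intertwiner $T$ satisfies $T=TP_k(B)=P(B)T=0$. Your argument, which passes through equivalent subrepresentations via polar decomposition, is equally valid, just a little longer.

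\textbf{The sign.} You need not hedge. Your own formula $\langle\pi_k(f)\xi,\xi\rangle=\int f(z+k)\,d\mu_\xi(z)$ shows the spectral measures are pushed forward by $z\mapsto z+k$. With the convention $\sigma_k=(T_k)_*\sigma$, this gives $[\sigma_k]$. The first assertion really is sign-sensitive. For $k\notin G$ one has $2k\notin G$, because $K$ is torsion-free and $G$ is divisible. Hence $\sigma_k\perp\sigma_{-k}$, so the two candidate classes genuinely differ.
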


\begin{proof}
For $k=0$, the spectral projection of a Borel set $B\subseteq K$ is
$\lambda_p(1_B)$. Faithfulness of $\lambda_p$ implies that it is zero
exactly when $\sigma(B)=0$. Explicitly, if $(\xi_r)$ is an orthonormal
basis of $\mathcal H$ and $P$ is this spectral measure, the probability
\[
 \nu(B)=\sum_{r=0}^{\infty}2^{-r-1}\langle P(B)\xi_r,\xi_r\rangle
\]
has exactly the null sets of $P$, hence exactly the null sets of $\sigma$.
This identifies the maximal spectral class even though $\sigma$ itself
is infinite. Under $\alpha_k$, the spectral projection of $B$ becomes
$P(B-k)$, whose null-set criterion is $\sigma_k(B)=0$.

If $k\notin G$, equation~\eqref{eq:compact-measure-modulus} gives
$\sigma_k\perp\sigma$. Any bounded intertwiner of the two
$A$-representations intertwines their restrictions to $C(K)$ and hence
their Borel spectral projections. A Borel set supporting one of the
two singular spectral classes and null for the other then forces this
intertwiner to be zero. Thus the representations are disjoint.
\end{proof}

\begin{lemma}
\label{lem:normal-trace-modulus}
For $h\in\ell_1(\R)$, the automorphism $\alpha_{j(h)}$ extends through
$\iota_A$ to a normal automorphism $\beta_h$ of $M$, and
\begin{equation}
 \tau\circ\beta_h=c_h\tau,
 \qquad c_h=\exp\!\left(\sum_n h_n\right).
 \label{eq:trace-scaling}
\end{equation}
The unitary implementing $\beta_h$ on the trace Hilbert space is
\begin{equation}
 U_h(a)=c_h^{-1/2}\beta_h(a),
 \qquad a\in M\cap L^2(M,\tau).
 \label{eq:explicit-standard-unitary}
\end{equation}
It conjugates both $L_b$ and $R_b$ to multiplication by $\beta_h(b)$.
\end{lemma}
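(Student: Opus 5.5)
We construct $\beta_h$ directly on $M=L^\infty(K,\sigma)\rtimes D$ from the translation $T=T_{j(h)}$ of $K$. By \eqref{eq:compact-measure-modulus}, $T$ is a nonsingular automorphism of $(K,\sigma)$ with constant Radon--Nikodym derivative $c_h$. Hence $f\mapsto f(\cdot+j(h))$ is a normal $*$-automorphism $\beta_h^0$ of $L^\infty(K,\sigma)$. Because $K$ is abelian, $T$ commutes with every $d\in D$. We therefore extend by setting $\beta_h(fu_d)=\beta_h^0(f)u_d$.

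To see that this defines a normal automorphism of $M$, we implement it spatially. On $L^2(K,\sigma)\otimes\ell^2(D)$, which carries the standard representation of the crossed product, take $W=V\otimes 1$, where
\[
 (V\xi)(z)=c_h^{1/2}\,\xi(z+j(h)).
\]
The constant modulus makes $V$ unitary. Commutation with $D$ shows that $W$ normalizes the generators: $WM_fW^*=M_{\beta_h^0(f)}$ with the appropriate sign convention, and $W(1\otimes\lambda_d)W^*=1\otimes\lambda_d$. Then $\operatorname{Ad}W$ is a normal automorphism of $M$ agreeing with $\beta_h$ on the generators. It is compatible with $\iota_A$ because on $C(K)$ it is exactly $\alpha_{j(h)}$, and it fixes the unitaries $u_d$. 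Its inverse is $\beta_{-h}$.

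For the trace scaling we use the conditional expectation. Since $\beta_h$ fixes the $u_d$ and preserves $L^\infty(K,\sigma)$, it commutes with $E$ on the span of the generators, and hence on all of $M$ by normality of $E$ and $\beta_h$. Then for $b\in M_+$,
\[
 \tau(\beta_h b)=\int E(b)(z+j(h))\,d\sigma(z)=\int E(b)\,d\sigma_{j(h)}\quad\text{or}\quad\int E(b)\,d\sigma_{-j(h)},
\]
depending on the sign convention, and \eqref{eq:compact-measure-modulus} turns this into a constant multiple of $\tau(b)$. We then fix the direction of translation in the definition of $\alpha_k$ so that the constant is $c_h$ rather than $c_h^{-1}$. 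This sign bookkeeping is the one place where care is needed, and we would check it on $b=p=1_{j(S)}$ using \eqref{eq:shifted-base-mass}.

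Given \eqref{eq:trace-scaling}, $\beta_h$ maps $M\cap L^2(M,\tau)$ onto itself, and
\[
 \|c_h^{-1/2}\beta_h(a)\|_2^2=c_h^{-1}\tau(\beta_h(a^*a))=\|a\|_2^2 .
\]
So $U_h$ extends to an isometry of $L^2(M,\tau)$; it is onto because $U_{-h}$ is its inverse. Finally, multiplicativity of $\beta_h$ gives
\[
 U_hL_bU_h^*(a)=c_h^{-1/2}\beta_h\bigl(b\,\beta_h^{-1}(c_h^{1/2}a)\bigr)=\beta_h(b)a ,
\]
so $U_hL_bU_h^*=L_{\beta_h(b)}$. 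The same computation on the dense subspace gives $U_hR_bU_h^*=R_{\beta_h(b)}$, and both identities extend by continuity. The main obstacle is only the rigorous extension of $\beta_h$ to $M$, which the spatial unitary $W$ handles without appeal to the universal property of the crossed product.
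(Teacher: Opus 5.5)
Your proposal follows the paper's proof essentially step for step. You implement the translation spatially on $L^2(K,\sigma)\otimes\ell_2(D)$ to get a normal automorphism fixing the $u_d$. You then commute $\beta_h$ with $E$, first on finite Fourier sums and then by normality, to obtain the trace scaling. Finally you use the isometry $c_h^{-1/2}\beta_h$ on the square-integrable ideal, with inverse $U_{-h}$, and compute the conjugation identities on that dense ideal.

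Two bookkeeping points need correcting.

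First, your $V$ has the wrong normalization. Since $\sigma_{j(h)}=(T_{j(h)})_*\sigma=c_h\sigma$ by \eqref{eq:compact-measure-modulus}, one has $\int_K|\xi(z+j(h))|^2\,d\sigma(z)=c_h\|\xi\|^2$. So $\xi\mapsto c_h^{1/2}\xi(\cdot+j(h))$ has norm $c_h$, not $1$, and your $WM_fW^*$ would equal $c_h^2M_{\beta^0_h(f)}$. The unitary is $\xi\mapsto c_h^{-1/2}\xi(\cdot+j(h))$, as in the paper. This also matches your own $U_h=c_h^{-1/2}\beta_h$, which is the same translation in the same direction.

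Second, no sign convention is left to choose, and you may not redefine $\alpha_k$. It is fixed by $\alpha_k(f)(z)=f(z+k)$ and used later, for example in $\tau(\beta_h^{-1}(p))=c_h^{-1}$ in Theorem~\ref{thm:representation}. The same change of variables settles the sign directly:
\[
 \tau(\beta_h b)=\int_K E(b)(z+j(h))\,d\sigma(z)=\int_K E(b)\,d\sigma_{j(h)}=c_h\,\tau(b).
\]
This agrees with your proposed test, $\tau(\beta_h p)=\sigma\bigl(j(S)-j(h)\bigr)=\sigma_{j(h)}(j(S))=c_h$, which follows from \eqref{eq:shifted-base-mass}.

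With these corrections, your argument is the paper's.
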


\begin{proof}
By \eqref{eq:compact-measure-modulus}, translation by $j(h)$ preserves
the measure class of $\sigma$. It therefore defines a normal
automorphism of $L^\infty(K,\sigma)$, and it commutes with $D$.
It extends normally to the crossed product, with
\[
 \beta_h(f)(z)=f(z+j(h)),\qquad \beta_h(u_d)=u_d.
\]
One can check normality directly in the regular covariant realization
on $L^2(K,\sigma)\otimes\ell_2(D)$: simultaneous translation of the
$K$ variable, multiplied by $c_h^{-1/2}$, is a unitary and conjugates
the diagonal and group generators according to the displayed formulas.

The conditional expectation commutes with these automorphisms, first
on finite Fourier sums and then on $M$ by normality. Consequently,
for $b\in M_+$,
\[
 \tau(\beta_h(b))
   =\int_K E(b)(z+j(h))\,d\sigma(z)
   =c_h\int_K E(b)(z)\,d\sigma(z),
\]
which proves \eqref{eq:trace-scaling}. In particular, the sign of the
exponent agrees with the convention $\sigma_{j(h)}=(T_{j(h)})_*\sigma$.

For $a\in M\cap L^2(M,\tau)$, equation~\eqref{eq:trace-scaling} gives
\[
 \|c_h^{-1/2}\beta_h(a)\|_2^2
   =c_h^{-1}\tau(\beta_h(a^*a))=\tau(a^*a).
\]
The inverse formula using $\beta_h^{-1}$ proves that this isometry
extends to a unitary on $L^2(M,\tau)$. Multiplication on the dense
square-integrable ideal gives
\[
 U_hL_bU_h^*=L_{\beta_h(b)},\qquad
 U_hR_bU_h^*=R_{\beta_h(b)}.
\]
Thus \eqref{eq:explicit-standard-unitary} has all the asserted properties.
\end{proof}

\begin{lemma}
\label{lem:right-corner-comparison}
For nonzero finite projections $p_1,p_2\in M$, the left representations
of $M$ on $L^2(M,\tau)p_1$ and $L^2(M,\tau)p_2$ are unitarily equivalent
if and only if $\tau(p_1)=\tau(p_2)$. The same criterion holds after
restriction along $\iota_A$.
\end{lemma}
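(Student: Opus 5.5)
The plan is to prove both directions directly from partial isometries in the semifinite factor $M$, and then pass to $A$ using ultraweak density of $\iota_A(A)$.

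\emph{If $\tau(p_1)=\tau(p_2)$, the modules are equivalent.} In a factor with faithful normal semifinite trace, finite projections of equal trace are Murray--von Neumann equivalent. This follows from comparability of projections in a factor together with faithfulness of $\tau$ on differences. So we obtain a partial isometry $v\in M$ with $v^*v=p_1$ and $vv^*=p_2$. We then define $W:L^2(M,\tau)p_1\to L^2(M,\tau)p_2$ by $W\xi=\xi v^*$, that is, $W=R_{v^*}$ restricted to the first corner. Since right multiplication commutes with left multiplication, $W$ intertwines the left actions. Using the traciality $\tau(v\xi^*\xi v^*)=\tau(\xi^*\xi v^*v)=\tau(\xi^*\xi p_1)$ on the dense square-integrable ideal, we get that $W$ is isometric. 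The map $\eta\mapsto\eta v$ is a two-sided inverse on the corners because $p_1=v^*v$ and $p_2=vv^*$, so $W$ is unitary.

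\emph{If the modules are equivalent, the traces agree.} Let $W$ be a unitary intertwiner. Then $W$ commutes with the left action, so it lies in the commutant of $\lambda(M)$ restricted between the corners. By the commutant identity already quoted, $\lambda(M)'=R(M)$ on $L^2(M,\tau)$, together with compression to corners, $W=R_x$ for some $x\in p_2Mp_1$ (written with the appropriate order convention). Unitarity gives $x^*x=p_1$ and $xx^*=p_2$, up to the op-convention, which makes $p_1$ and $p_2$ Murray--von Neumann equivalent. Traciality, $\tau(x^*x)=\tau(xx^*)$, then yields $\tau(p_1)=\tau(p_2)$.

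This is the step I expect to be the main obstacle. It requires the precise form of the commutant theorem for off-diagonal corners, $\operatorname{Hom}_M(L^2p_1,L^2p_2)=\{R_x:x\in p_1Mp_2\}$ with care about left versus right. I would derive it from $\lambda(M)'=R(M)$ on the full $L^2(M,\tau)$ by extending $W$ by zero on $L^2(M,\tau)(1-p_1)$. That extension commutes with $L_M$, so it equals $R_y$ for some $y\in M$, and then we set $x=p_1yp_2$. A more elementary fallback, avoiding the commutant, is to compute the coupling constant: for a finite trace-class left module $L^2(M,\tau)p$, the value $\tau(p)$ equals $\dim_M$. This is the von Neumann dimension, and it is a unitary invariant. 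For that route I would cite \cite[Lemma~3.3.8]{Peterson}.

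\emph{Restriction along $\iota_A$.} The left representations of $M$ are normal and $\iota_A(A)$ is ultraweakly dense in $M$. Hence any unitary intertwining the restricted representations of $A$ also intertwines their normal extensions to $M$: the intertwining relation is preserved under ultraweak limits on bounded sets, by Kaplansky density. The converse implication is trivial. So the criterion transfers unchanged.
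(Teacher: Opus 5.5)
Your proposal is correct and follows essentially the same route as the paper. For the equal-trace direction you use right multiplication by $v^*$. For the converse you extend the intertwiner by zero and apply the left--right commutation theorem, which yields a partial isometry in $M$ implementing $p_1\sim p_2$. For the passage to $A$ you use normality together with ultraweak density of $\iota_A(A)$.

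The only real difference is that you derive ``equal finite trace implies Murray--von Neumann equivalence'' from comparability in a factor and faithfulness of $\tau$, whereas the paper cites Sherman. Also, Kaplansky density is not actually needed there, since normal representations are ultraweakly continuous on all of $M$.
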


\begin{proof}
A unitary intertwiner between the two left modules, extended by zero
on the orthogonal complement of its initial module, is a partial
isometry on $L^2(M,\tau)$ commuting with $L(M)$. By the left--right
commutation theorem, it belongs to $R(M)$. Its initial and final
projections are $R_{p_1}$ and $R_{p_2}$, so it gives Murray--von Neumann
equivalence of $p_1$ and $p_2$ in $M$. Conversely, a partial isometry
$v\in M$ with $v^*v=p_1$ and $vv^*=p_2$ gives the required unitary by
right multiplication by $v^*$. Finite projections in a semifinite
factor are Murray--von Neumann equivalent precisely when their traces
are equal; see \cite[Theorem~2.7 and Corollary~2.8(1)]{Sherman}.
Finally, an intertwiner for the restrictions to $A$ intertwines $M$ as
well, because the representations are normal and $\iota_A(A)$ is
ultraweakly dense. This proves the last assertion.
\end{proof}

\begin{theorem}
\label{thm:representation}
For all $k,l\in K$,
\[
 \pi_k\,\UE\,\pi_l\quad\Longleftrightarrow\quad k-l\in H.
\]
Moreover, the quasi-equivalence stabilizer of $\pi$ is $G$, and its
unitary-equivalence stabilizer is $H$. Every representation in the
continuous family \eqref{eq:fixed-module-family} has represented factor
of type $\mathrm{II}_\infty$ and commutant of type $\mathrm{II}_1$.
\end{theorem}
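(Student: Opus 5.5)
First reduce to $l=0$. Since $\alpha_{k}\circ\alpha_l=\alpha_{k+l}$, we have $\pi_k=\pi_{k-l}\circ\alpha_l$, and precomposition with the automorphism $\alpha_l$ preserves unitary equivalence and quasi-equivalence. So it suffices to determine, for a single $k\in K$, when $\pi_k\UE\pi$ and when $\pi_k$ is quasi-equivalent to $\pi$. The proof splits into three cases: $k\notin G$, $k\in G$ in general, and $k\in H$.

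\emph{Case $k\notin G$.} Lemma~\ref{lem:diagonal-spectral-class} shows that $\pi_k$ and $\pi$ are disjoint. They are therefore neither quasi-equivalent nor unitarily equivalent; this uses only that $\mathcal H\neq0$.

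\emph{Case $k\in G$.} Write $k=j(h)$ with $h\in\ell_1(\R)$, and use Lemma~\ref{lem:normal-trace-modulus}. On $A$ we have $\lambda_p\circ\iota_A\circ\alpha_k=\lambda_p\circ\beta_h\circ\iota_A$, so $\pi_k$ is the restriction along $\iota_A$ of the normal representation $b\mapsto L_{\beta_h(b)}\mathbin{\upharpoonright}L^2(M,\tau)p$.

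Conjugating by the unitary $U_h^*$ turns this into left multiplication by $b$ on $U_h^*(L^2(M,\tau)p)$. Since $U_h^*R_pU_h=R_{\beta_h^{-1}(p)}$, this subspace is $L^2(M,\tau)p'$ with $p'=\beta_h^{-1}(p)$. Hence $\pi_k$ is unitarily equivalent to the left representation of $M$ on $L^2(M,\tau)p'$, restricted along $\iota_A$. By~\eqref{eq:trace-scaling},
\[
\tau(p')=c_h^{-1}\tau(p)=e^{-\sum_n h_n}.
\]
This value is finite and nonzero.

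Lemma~\ref{lem:right-corner-comparison} now gives $\pi_k\UE\pi$ if and only if $e^{-\sum_n h_n}=1$, that is, $\sum_n h_n=0$, that is, $k\in H$. The representative $h$ of $k$ is unique because $j$ is injective, so this condition is well defined. This proves $\pi_k\UE\pi_l\iff k-l\in H$ and identifies the unitary-equivalence stabilizer as $H$.

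For quasi-equivalence when $k\in G$, both representations are normal, faithful representations of the factor $M$ restricted along $\iota_A$. Faithfulness of $\lambda_p$, together with the fact that $\beta_h$ is an automorphism, gives faithfulness of both. The map $\Phi=\lambda_p\circ\beta_h\circ\lambda_p^{-1}$ is a normal $*$-isomorphism $\lambda_p(M)\to\lambda_p(M)$. By ultraweak density of $\iota_A(A)$, $\lambda_p(M)=\pi(A)''$, and likewise $\pi_k(A)''=\lambda_p(M)$. Moreover $\Phi(\pi(a))=\pi_k(a)$. So $\pi$ and $\pi_k$ are quasi-equivalent, and together with the disjointness case the quasi-equivalence stabilizer is exactly $G$.

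The type statements follow from the discussion before the lemmas. For every $k$, $\pi_k(A)''=\lambda_p(\beta(M))$, or more simply $\pi_k(A)''=\pi(\alpha_k(A))''$. Since $\alpha_k$ is an automorphism of $A$, $\pi_k(A)=\pi(A)$ as sets, so $\pi_k(A)''=\pi(A)''\cong M$, which is of type $\mathrm{II}_\infty$. Its commutant is $\pi(A)'\cong(pMp)^{\mathrm{op}}$, which is of type $\mathrm{II}_1$. This observation also makes the quasi-equivalence argument above slightly more direct.

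The main obstacle is the unitary-equivalence step. One has to check carefully that conjugating by $U_h$ really transports the right corner $R_p$ to $R_{\beta_h^{-1}(p)}$, with the correct sign of the exponent, so that Lemma~\ref{lem:right-corner-comparison} applies with the trace scaled by exactly $c_h^{\pm1}$.
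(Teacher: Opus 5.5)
Your proof is correct and follows the paper's argument essentially step for step. You reduce to $l=0$ by precomposing with $\alpha_l$, use Lemma~\ref{lem:diagonal-spectral-class} for disjointness when $k\notin G$, conjugate by $U_h^*$ to move the corner to $L^2(M,\tau)\beta_h^{-1}(p)$ of trace $c_h^{-1}$ and then apply Lemma~\ref{lem:right-corner-comparison}, and obtain the type assertions from $\pi_k(A)=\pi(A)$; the only difference is that you write out the normal isomorphism $\Phi=\lambda_p\circ\beta_h\circ\lambda_p^{-1}$ witnessing quasi-equivalence, which the paper leaves implicit.
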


\begin{proof}
Lemma~\ref{lem:diagonal-spectral-class} excludes quasi-equivalence,
and in particular unitary equivalence, when $k\notin G$. If $k=j(h)$
with $h\in\ell_1(\R)$, Lemma~\ref{lem:normal-trace-modulus} gives a
normal automorphism $\beta_h$ of $M$ satisfying
$\beta_h\circ\iota_A=\iota_A\circ\alpha_k$. It follows that
$\pi_k$ is quasi-equivalent to $\pi$.

To determine unitary equivalence, conjugate by the unitary $U_h^*$
from \eqref{eq:explicit-standard-unitary}. It maps
$L^2(M,\tau)p$ onto $L^2(M,\tau)\beta_h^{-1}(p)$ and identifies
$\pi_k$ with the ordinary left representation on that latter module.
The two right projections have traces
\[
 \tau(p)=1,\qquad
 \tau(\beta_h^{-1}(p))=c_h^{-1}
       =\exp\!\left(-\sum_n h_n\right).
\]
By Lemma~\ref{lem:right-corner-comparison}, the modules are equivalent
exactly when $\sum_n h_n=0$, that is, exactly when $k\in H$.
This proves both stabilizer assertions.

Precomposing both members of a pair with $\alpha_{-l}$ preserves
unitary equivalence and reduces the pair $(\pi_k,\pi_l)$ to
$(\pi_{k-l},\pi)$. This proves the displayed equivalence for all pairs.
Finally, $\alpha_k$ is onto, so $\pi_k(A)=\pi(A)$ for every $k$.
Their von Neumann closures and commutants are therefore the fixed
algebras already identified above.
\end{proof}
\subsection{Coding by a single bounded operator}\label{sec:coding}
We give the explicit coding needed to pass from the representation family
to the two questions in the introduction. Throughout this subsection all
identifications of fixed separable infinite-dimensional Hilbert spaces
are chosen once and do not depend on the input.

\begin{lemma}\label{lem:tuple-coding}
Simultaneous unitary equivalence of countable tuples of unitaries
continuously reduces to unitary equivalence of operators of norm at most
$7/8$ on a fixed Hilbert space. Here the source has the product strong
topology and the target ball has the strong operator topology (or the
strong-star topology).
\end{lemma}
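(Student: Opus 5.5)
The reduction goes in two stages: first a tuple of unitaries becomes a single contraction on a larger space, then the strict norm bound is obtained by scaling. Given a sequence $(u_n)_{n\in\N}$ of unitaries on $\mathcal H$, form on $\mathcal H\otimes\ell_2(\N)=\bigoplus_n\mathcal H$ a weighted shift whose weights are built from the $u_n$. The plan is to use the operator
\[
 T(u)\,(\xi_0,\xi_1,\xi_2,\ldots)=\bigl(0,\ w_0\xi_0,\ w_1\xi_1,\ \ldots\bigr),
\]
with operator weights $w_n$ chosen so that the algebra generated by $T(u)$ and $T(u)^*$ recovers both the block decomposition and every $u_n$. A convenient choice interleaves fixed scalar weights with the unitaries: the weights alternate between distinct scalars $2^{-n-2}$ (or similar strictly decreasing values) and the unitaries $u_n$ scaled by a fixed constant. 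The scalar moduli $|T|^2=T^*T$ then have distinct eigenvalues on distinct blocks, so the spectral projections of $T^*T$ recover the block projections $P_n$. Compressing $T$ between consecutive blocks then gives $P_{n+1}TP_n$, which is a known scalar times $u_n$ (or the identity at scalar stages). To keep the norm below $7/8$, take all weights of norm at most, say, $3/4$. This is possible because a scalar multiple of a unitary has norm equal to that scalar.

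\emph{Forward direction.} If $Vu_nV^*=v_n$ for all $n$, then $V\otimes 1$ conjugates $T(u)$ to $T(v)$. This is immediate, since the scalar weights commute with $V\otimes1$.

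\emph{Reverse direction.} Suppose $W T(u)W^*=T(v)$. Then $W$ conjugates $T(u)^*T(u)$ to $T(v)^*T(v)$. These two operators are identical block-diagonal scalar operators with distinct eigenvalues on distinct blocks (one must check that the first block, which is not in the range of $T$, still gets a distinct value of $T^*T$). Hence $W$ commutes with each $P_n$ and is block diagonal, $W=\bigoplus_n W_n$. At a scalar stage the relation $P_{n+1}TP_n$ gives $W_{n+1}=W_n$. At a unitary stage it gives $W_{n+1}u_n W_n^*=v_n$. The main obstacle is arranging that a single unitary $W_0$ works for every $n$. Two stages that are not linked would only give $W_{n+1}u_nW_n^*=v_n$ with unrelated $W_n$, which does not yield simultaneous equivalence. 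To force all $W_n$ to coincide, I would insert a fixed reference link: at each block include, besides the $u_n$-link, a scalar link to a common block. An alternative is a two-channel design in which block $n$ connects to block $n+1$ by a scalar weight $s_n$ and additionally to a fixed base block $0$ by $c\,u_n$. The base-block route is the one to try first. Then $W_n=W_0$ for all $n$ through the scalar links, and $W_0u_nW_0^*=v_n$ through the $u$-links, with the block projections recovered as spectral projections of a suitable polynomial in $T,T^*$ (for instance $TT^*$ restricted appropriately), after tuning the scalars so that the eigenvalues of $T^*T$ on blocks are distinct.

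\emph{Continuity.} The map $u\mapsto T(u)$ is continuous from the product strong topology into the ball with the strong (indeed strong-star) topology. Each block entry depends continuously on one coordinate $u_n$, the weights are uniformly bounded, and strong convergence on the algebraic direct sum extends to the closure by uniform boundedness. Finally, a fixed unitary identification of $\mathcal H\otimes\ell_2(\N)$ with $\mathcal H$ is chosen once.
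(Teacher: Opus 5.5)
There is a genuine gap, in the step that makes the intertwiner $W$ block diagonal. The invariant you propose for that step does not survive the linking you need.

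Your outline is otherwise sound. A unitarily invariant operator with distinct scalar values on the blocks would force any intertwiner to be block diagonal. Scalar links would then force all diagonal blocks of $W$ to agree, and the $u_n$-links would give $W_0u_nW_0^*=v_n$. The forward direction is also fine.

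In a pure weighted shift each block receives from only one block, so $T^*T=\bigoplus_nw_n^*w_n$ is block diagonal. But, as you observe, that design carries no information. In your two-channel design, block $0$ receives $c\,u_n\xi_n$ from every block $n$. So for $\xi$ in block $n$ and $\eta$ in block $n'\neq n$ one gets $\langle T^*T\xi,\eta\rangle=c^2\langle u_{n'}^*u_n\xi,\eta\rangle$. Thus $T^*T$ has the nonzero off-diagonal entries $c^2u_{n'}^*u_n$. Likewise $TT^*$ has the nonzero entry $c\,s_nu_n$ from block $n+1$ to block $0$.

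Hence the claim that $T^*T$ has distinct eigenvalues on the blocks is false for this design. The ``suitable polynomial in $T,T^*$'' that would recover the projections $P_n$ is never produced. Nothing in your argument then forces a unitary $W$ with $WT(u)W^*=T(v)$ to be block diagonal, so the reverse direction is unproved.

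There is also a norm problem. Putting the same constant $c$ on infinitely many links into block $0$ makes $T$ unbounded. Take every $u_n=I$ and spread a unit vector evenly over $N$ blocks; the block-$0$ component then has norm $c\sqrt N$. So bounding each individual weight by $3/4$ does not give $\|T\|\le7/8$.

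The paper closes the gap by separating block identification from linking, using the Cartesian decomposition of a single operator. It sets $T_U=\Delta+iB_U$, where $\Delta=\sum_j2^{-j-1}J_jJ_j^*$. The self-adjoint part $B_U$ contains the scalar links $2^{-j-4}$ between block $0$ and each block $j\geq1$. It also contains the links $2^{-n-4}U_n$, together with their adjoint entries, between blocks $2n+1$ and $2n+2$.

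Since $\Delta=(T_U+T_U^*)/2$ is determined by $T_U$, any unitary $Q$ with $QT_UQ^*=T_V$ satisfies $Q\Delta Q^*=\Delta$. The eigenspaces of $\Delta$ are exactly the blocks, so $Q$ is block diagonal. Then $QB_UQ^*=B_V$ gives $W_j=W_0$ and $W_0U_nW_0^*=V_n$, which is precisely your bookkeeping. The summable coefficients give $\|T_U\|\le1/2+3/8$ and the uniformly small tails needed for continuity.
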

\begin{proof}
Set $\mathcal H'=\bigoplus_{j\ge0}\mathcal H_j$, where each summand is a
copy of $\mathcal H$, and let $J_j:\mathcal H\to\mathcal H'$ be its
inclusion. For a tuple $U=(U_n)_{n\ge0}$ define
\[
 \Delta=\sum_{j\ge0}2^{-j-1}J_jJ_j^*
\]
and
\begin{align*}
 B_U={}&\sum_{j\ge1}2^{-j-4}(J_0J_j^*+J_jJ_0^*)\\
 &+\sum_{n\ge0}2^{-n-4}
 (J_{2n+1}U_nJ_{2n+2}^*+J_{2n+2}U_n^*J_{2n+1}^*),\\
 T_U={}&\Delta+iB_U.
\end{align*}
The series converge in norm, uniformly in $U$, and
$\|B_U\|\le1/8+1/4=3/8$. Thus $\|T_U\|\le7/8$.
Since $B_U$ is self-adjoint, the real and imaginary parts of $T_U$
are $\Delta$ and $B_U$.

A unitary $W$ intertwining every $U_n$ and $V_n$ gives an intertwiner
$\bigoplus_j W$ between $T_U$ and $T_V$.
Conversely, a unitary $Q$ with $QT_UQ^*=T_V$ must commute with $\Delta$.
Its distinct eigenvalues $2^{-j-1}$ have eigenspaces exactly
$\mathcal H_j$, whose direct sum is all of $\mathcal H'$. Consequently
$Q=\bigoplus_j W_j$. The $(0,j)$ block of $QB_UQ^*=B_V$ gives
$W_0W_j^*=I$, so $W_j=W_0$ for all $j$. The $(2n+1,2n+2)$ block then
gives $W_0U_nW_0^*=V_n$ for every $n$.

On the unitary group the adjoint is strongly continuous. Each finite
partial sum is therefore continuous in the product strong topology,
and the omitted tails have a uniform norm bound tending to zero.
This proves the claimed continuity. Finally identify $\mathcal H'$
with $\mathcal H$ by one fixed unitary.
\end{proof}

\begin{lemma}\label{lem:algebra-coding}
For every separable unital $C^*$-algebra $A$, unitary equivalence of
representations of $A$ on $\mathcal H$ Borel reduces to
$\approx^+_{F_\infty}$ and to ${\simeq_u}$.
\end{lemma}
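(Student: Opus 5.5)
Since $A$ is separable, fix a countable set $\{a_n\}_{n\ge0}$ of unitaries in $A$ whose linear span is norm dense in $A$; it contains $1$. Such a set exists because every element of a unital $C^*$-algebra is a linear combination of four unitaries, and we apply this to a countable dense subset. Send a representation $\rho\in\Rep(A,\mathcal H)$ to the tuple $(\rho(a_n))_{n\ge0}$ of unitaries.

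\emph{The map is a reduction.} For any unitary $W$, the identity $W\rho(a)W^*=\rho'(a)$ holds for all $a$ if and only if it holds for all $a_n$. The forward direction is trivial. For the converse, both sides are linear and norm continuous in $a$, since representations are contractive, so agreement on the dense span gives agreement on $A$. Hence unitary equivalence of representations corresponds exactly to simultaneous unitary equivalence of the coded tuples.

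\emph{Passing to $F_\infty$.} Let $F_\infty$ be free on generators $x_0,x_1,\ldots$. The tuple determines a unique representation $\tilde\rho$ of $F_\infty$ with $\tilde\rho(x_n)=\rho(a_n)$. Simultaneous conjugation of the generators is the same as unitary equivalence of the free-group representations, so $\rho\mapsto\tilde\rho$ reduces to $\approx^+_{F_\infty}$.

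\emph{Passing to $\simeq_u$.} Compose the tuple map with Lemma~\ref{lem:tuple-coding}. This sends the tuple to a single operator $T$ of norm at most $7/8$, and simultaneous unitary equivalence of tuples becomes ${\simeq_u}$ of these operators. This gives the reduction to ${\simeq_u}$.

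\emph{Borel measurability.} The space $\Rep(A,\mathcal H)$ carries the point-strong (equivalently point-weak) Borel structure. Each coordinate map $\rho\mapsto\rho(a_n)$ is continuous into the unitary group with its strong topology. Pointwise strong convergence on generators defines the topology of $\Rep(F_\infty,\mathcal H)$, so the first map is continuous. Lemma~\ref{lem:tuple-coding} gives continuity into the norm-bounded ball with the strong topology, and on that ball the Borel structure agrees with the standard one on $B(\mathcal H)$.

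\emph{Main obstacle.} The only delicate point is choosing the generating family to consist of unitaries, so that the coded tuple genuinely lives in the source space of Lemma~\ref{lem:tuple-coding} and in $\Rep(F_\infty,\mathcal H)$. I would handle this with the four-unitary decomposition, $a=\|a\|$ times a combination of unitaries of the form $b\pm i\sqrt{1-b^2}$ built from the real and imaginary parts. I would also check that the source topology is the one implicitly used in the paper's representation family (point-norm continuity there implies point-strong continuity here). Everything else is routine bookkeeping.
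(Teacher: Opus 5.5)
Your proof is correct and follows the paper's route: code a representation by its values on a fixed countable family of unitaries, read that tuple as a representation of $F_\infty$, and compose with Lemma~\ref{lem:tuple-coding}. The only difference is cosmetic. You choose unitaries with dense linear span via the four-unitary decomposition, so the intertwiner step is pure linearity and continuity. The paper instead takes the unitaries $e^{ib_n}$ for a generating sequence of self-adjoint contractions $b_n$, and uses that these unitaries generate $A$ as a $C^*$-algebra.
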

\begin{proof}
Choose a countable generating sequence of unitaries $(a_n)$ in $A$.
Such a sequence exists: a separable unital $C^*$-algebra has a countable
generating sequence of self-adjoint contractions $b_n$, and the
unitaries $e^{ib_n}$ generate the same algebra, since the exponential
has a continuous inverse on the arc $\{e^{it}:-1\le t\le1\}$.
A representation $\pi$ is determined by $(\pi(a_n))_n$, and a unitary
intertwines two representations exactly when it intertwines these
tuples. A countable tuple of unitaries is precisely a representation
of the free group $F_\infty$ on its named generators.
Evaluation at fixed elements is Borel, indeed continuous in the
pointwise strong topology on representations. Lemma~\ref{lem:tuple-coding}
gives the second reduction. For the family in
Theorem~\ref{thm:representation}, these maps are continuous.
\end{proof}

\begin{proposition}\label{prop:operator-group}
The relations ${\simeq_u}$ and $\approx^+_{F_\infty}$ are
Borel bireducible. They are also Borel bireducible with simultaneous
unitary equivalence of pairs of unitaries.
\end{proposition}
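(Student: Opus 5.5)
The plan is to close a cycle of reductions among three relations: ${\simeq_u}$, $\approx^+_{F_\infty}$, and simultaneous unitary equivalence of pairs of unitaries, which I write $\approx^+_{F_2}$.

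\emph{Step 1: $\approx^+_{F_\infty}\leq_B{\simeq_u}$.} This is Lemma~\ref{lem:tuple-coding} directly, since a representation of $F_\infty$ is a countable tuple of unitaries.

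\emph{Step 2: ${\simeq_u}\leq_B\approx^+_{F_\infty}$.} First I normalize an arbitrary bounded $T$ by the Borel map $T\mapsto T/(2\|T\|+1)$. This is unambiguous because $\|T\|$ is Borel as a supremum of matrix coefficients, and the scaling constant is a unitary invariant, so unitary equivalence is preserved and reflected. The normalized operator has norm at most $1/2$. I then write it as $T=A+iB$ with $A,B$ self-adjoint contractions of norm at most $1/2$ and pass to the pair $(e^{iA},e^{iB})$. The map $A\mapsto e^{iA}$ is a continuous function of $A$, given by a norm-convergent power series, and it is invertible by the principal logarithm on the arc $\{e^{it}:|t|\le 1/2\}$. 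Hence a unitary $W$ conjugates $T$ to $T'$ exactly when it conjugates $e^{iA}$ to $e^{iA'}$ and $e^{iB}$ to $e^{iB'}$. This gives ${\simeq_u}\leq_B\approx^+_{F_2}$, and padding the tuple with identities gives $\approx^+_{F_2}\leq_B\approx^+_{F_\infty}$. Borelness of the map reduces to Borelness of products and adjoints on norm-bounded balls in the strong-star topology, where the Borel structures agree by the preliminaries.

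\emph{Step 3: $\approx^+_{F_\infty}\leq_B\approx^+_{F_2}$.} This step closes the cycle, since then all three relations are bireducible. I take $(U_n)_{n\ge0}$ and apply Lemma~\ref{lem:tuple-coding} to get the operator $T_U=\Delta+iB_U$, which codes the tuple up to simultaneous conjugacy. I then apply the exponential trick of Step~2 to the self-adjoint parts $\Delta$ and $B_U$, whose norms are at most $1$ and at most $3/8$. The result is the pair $(e^{i\Delta},e^{iB_U})$. Since $\|\Delta\|\le 1<\pi$, the functional calculus recovers $\Delta$ from $e^{i\Delta}$, and likewise $B_U$ from $e^{iB_U}$. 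So $W$ conjugates the pairs if and only if it conjugates $T_U$ to $T_V$, which by Lemma~\ref{lem:tuple-coding} happens if and only if it intertwines the tuples. The map is continuous from the product strong topology into pairs of unitaries, because the exponential is strongly continuous on bounded sets of self-adjoint operators.

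\emph{Main obstacle.} The delicate point is the Borel measurability of the functional-calculus maps on the standard Borel structure of $B(\mathcal H)$, together with the injectivity of the exponential on the relevant spectra. Injectivity is what guarantees that the reductions reflect equivalence, not just preserve it. I would handle both by restricting to norm balls and approximating $e^{iA}$ by polynomials uniformly on the spectrum.
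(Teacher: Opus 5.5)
Your proposal is correct and follows the paper's route: normalize $T$, exponentiate its real and imaginary parts to get a pair of unitaries, pad with identities to reach $\approx^+_{F_\infty}$, and close the cycle with Lemma~\ref{lem:tuple-coding}. Your Step~3 is harmless but redundant: Steps~1 and~2 already give $\approx^+_{F_\infty}\leq_B{\simeq_u}\leq_B\approx^+_{F_2}\leq_B\approx^+_{F_\infty}$, so the cycle is closed before Step~3 begins.
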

\begin{proof}
Only the reduction from one arbitrary bounded operator to two
unitaries remains to be given. Put
\[
 Q_T=\frac{T}{1+\|T\|},\qquad
 A_T=\frac{Q_T+Q_T^*}{2},\qquad
 C_T=\frac{Q_T-Q_T^*}{2i},
\]
and send $T$ to $(e^{iA_T},e^{iC_T})$.
The two self-adjoint operators have norm less than one. Functional
calculus with the fixed continuous inverse of the exponential on the
arc above recovers $A_T$ and $C_T$ from these unitaries. Thus simultaneous
conjugacy recovers $Q_T$, and then recovers
\[
 T=\frac{Q_T}{1-\|Q_T\|}.
\]
Conversely every conjugacy of $T$ gives a conjugacy of the pair.

The map is Borel: the norm and adjoint are Borel in the coefficient
Borel structure, and the exponential on a bounded self-adjoint ball
is a strong limit of polynomials with uniform error bounds.
Pad the pair by identities to obtain a countable unitary tuple.
The reverse reduction is Lemma~\ref{lem:tuple-coding}.
\end{proof}

\subsection{The operator and representation theorems}
\begin{proof}[Proof of Theorem~\ref{main:operators}]
Theorem~\ref{thm:representation} and Lemma~\ref{lem:algebra-coding} give
\[
 K/H\leB{\simeq_u},
\]
whereas Theorem~\ref{thm:nonreduction} gives $K/H\nleB{\equiv_m}$.
Composition of reductions therefore proves
$ {\simeq_u}\nleB{\equiv_m}$.

For the reverse reduction, embed Cantor space in the unit circle and
send $\mu\in\Prob(2^\N)$ to the infinite amplification of multiplication
by the circle coordinate on $L^2(\mu)$. Its multiplicity is uniformly
infinite and its maximal spectral measure class is $[\mu]$, so two such
unitaries are equivalent exactly when the measures are equivalent.
This has a Borel realization on one fixed Hilbert space: apply
Gram--Schmidt, skipping zero vectors, to a fixed countable dense family
of continuous functions tensored with the standard basis of $\ell_2$.
All inner products are Borel functions of $\mu$, and the amplified
Hilbert space is always infinite dimensional. The resulting multiplication
operators therefore have Borel matrix coefficients.
This proves $ {\equiv_m}\leB{\simeq_u}$, and hence strictness.
\end{proof}

\begin{proof}[Proof of Theorem~\ref{main:thomas}]
A representation of $\mathbb Z$ is determined by one unitary.
The spectral theorem, with spectral multiplicities encoded by tagged
measures, gives
\[
 {\approx^+_{\mathbb Z}}\sim_B{\equiv_m};
\]
see Gao \cite[Section~8.5, p.~200]{Gao} and Thomas
\cite[Section~8]{Thomas}. Proposition~\ref{prop:operator-group} gives
\[
 {\approx^+_{F_\infty}}\sim_B{\simeq_u}.
\]
Theorem~\ref{main:operators} now supplies the required strict inequality.
In particular, arbitrary representations of $F_\infty$ cannot be
classified, by a Borel assignment, using measure-equivalence classes.
\end{proof}

\section*{Acknowledgment}
This paper is a collaboration with artificial intelligence. The AI model is gpt 6. The AI proves the main theorems under human suggestions and writes early drafts. The author revises the paper and improves readability. The author has reviewed all AI-generated content.

\begingroup
\raggedright

\endgroup
\noindent\begin{minipage}{\textwidth}
\small
\textsc{School of Mathematical Sciences and School of Pre-university,}\\
\textsc{Dalian Minzu University}\\[0.4em]
\textit{Email address:}\enspace
\href{mailto:wangruijun@dlnu.edu.cn}{\texttt{wangruijun@dlnu.edu.cn}}
\end{minipage}
\end{document}